\newtheorem{prop}{Proposition}
\newtheorem{thm}{Theorem}
\newtheorem{lem}{Lemma}
\newtheorem{conj}{Conjecture}
\theoremstyle{definition}
      \def\@setcopyright{}
      \def\serieslogo@{}
\begin{document}

\title[The Dedekind zeta function]{Moments of the Dedekind zeta function and other non-primitive $L$-functions}
\author{Winston Heap}
\address{Department of Mathematics, University of York, York, YO10 5DD, U.K.}
\email{winstonheap@gmail.com}
\thanks{The author is supported by an Engineering and Physical Sciences Research Council doctoral grant}
\begin{abstract}We give a conjecture for the moments of the Dedekind zeta function of a Galois extension. This is achieved through the hybrid product method of Gonek, Hughes and Keating. The moments of the product over primes are evaluated using a theorem of Montgomery and Vaughan, whilst the moments of the product over zeros are conjectured using a heuristic method involving random matrix theory. The asymptotic formula of the latter is then proved for quadratic extensions in the lowest order case. We are also able to reproduce our moments conjecture in the case of quadratic extensions by using a modified version of the moments recipe of Conrey et al. Generalising our methods, we then provide a conjecture for moments of non-primitive $L$-functions, which is supported by some calculations based on Selberg's conjectures. 
\end{abstract}
\maketitle

\section{Introduction and statement of results}
Let $\mathbb{K}$ be a number field of discriminant $d_\mathbb{K}$ and let $\zeta_\mathbb{K}(s)$ be its Dedekind zeta function.
In this note we are interested in the asymptotic behaviour of the moments
\begin{equation}I_k(T)=\frac{1}{T}\int_T^{2T}\left|\zeta_\mathbb{K}\left(\frac{1}{2}+it\right)\right|^{2k}dt
\end{equation}
with $k$ real. The only known asymptotic for $I_k(T)$ was given by Motohashi \cite{mot} in the case where $\mathbb{K}$ is quadratic and $k=1$. He showed that 
\begin{equation}\label{second moment}
I_1(T)\sim \frac{6}{\pi ^2}L(1,\chi)^2\prod_{p|d_\mathbb{K}}\left(1+\frac{1}{p}\right)^{-1}\log^2 T
\end{equation}
where $\chi$ is the Kronecker character $(d_\mathbb{K}| \,\cdot\,)$.
Other results concerning the mean values of $\zeta_\mathbb{K}(s)$ can be found in \cite{bm,bm 2,fomenko,me,muller ded,sarnak 0}.

Similarly to the Riemann zeta function, it is difficult to even form conjectures on the higher asymptotics of $I_k(T)$. In the paper \cite{conrey ghosh}, Conrey and Ghosh  were able to provide a conjecture for the sixth moment of $\zeta(1/2+it)$. Later, Conrey and Gonek \cite{conrey gonek} described a method that could also give a conjecture for the eighth. Their methods involved mean values of long Dirichlet polynomials, and it seems these methods reach their limit with the eighth moment.  It is only recently that believable conjectures have been made for all values $k>-1/2$. These were first given by Keating and Snaith \cite{keating snaith} and took the form
\begin{equation}\frac{1}{T}\int_T^{2T}\left|\zeta\left(\frac{1}{2}+it\right)\right|^{2k}dt\sim \frac{a(k)g(k)}{\Gamma(k^2+1)}\log^{k^2}T
\end{equation}
where
\begin{equation}\label{zeta arith}a(k)=\prod_p\left(\left(1-\frac{1}{p}\right)^{k^2}\sum_{j\geq 0}\frac{d_k(p^j)^2}{p^j}\right)
\end{equation}
and 
\begin{equation}\frac{g(k)}{\Gamma(k^2+1)}=\frac{G(k+1)^2}{G(2k+1)}
\end{equation}
where $G$ is Barnes' $G$-function. Their main idea was to model the zeta function as a characteristic polynomial. This was motivated by the apparent similarities between the non-trivial zeros of the zeta function and eigenangles of matrices in the circular unitary ensemble. However, one drawback of their method was that the arithmetic factor had to be incorporated in an ad hoc fashion. Later, Gonek, Hughes and Keating \cite{hybrid} reproduced this conjecture in such a way that the arithmetic factor was included in a more natural way. In this paper we reproduce these results for the Dedekind zeta function. 

The method of Gonek, Hughes and Keating first involves expressing the zeta function as a partial product over primes times a partial product over the zeros. This uses a smoothed form of the explicit formula due to Bombieri and Hejhal \cite{bomb hej}. The equivalent for the Dedekind zeta function takes the following form  

\begin{thm}\label{hybrid prod thm}Let $X\geq 2$ and let $l$ be any fixed positive integer. Let $u(x)$ be a real, non-negative, smooth function with mass 1 and compact support on $[e^{1-1/X},\,e]$. Set
\[U(z)=\int_0^\infty u(x)E_1(z\log x)dx,\] 
where $E_1(z)=\int_z^\infty e^{-w}/w \, dw$. Then for $\sigma\geq 0$ and $|t|\geq 2$ we have 

\begin{equation}\label{hybrid prod}\zeta_\mathbb{K}(s)=P_\mathbb{K}(s,X)Z_\mathbb{K}(s,X)\left(1+O\left(\frac{X^{l+2}}{(|s|\log X)^l}\right)+O(X^{-\sigma}\log X)\right)
\end{equation}
where 
\begin{equation}\label{P prod} P_\mathbb{K}(s,X)=\exp\bigg(\sum_{\substack{\mathfrak{a}\subseteq\mathcal{O}_\mathbb{K}\\\mathfrak{N}(\mathfrak{a})\leq X}}\frac{\Lambda(\mathfrak{a})}{\mathfrak{N}(\mathfrak{a})^s\log\mathfrak{N}(\mathfrak{a})}\bigg)
\end{equation}
with 
\begin{equation}\label{von mangoldt}\Lambda(\mathfrak{a})=
\begin{cases}\log \mathfrak{N}(\mathfrak{p})& \text{\,\,if\,\,} \mathfrak{a}=\mathfrak{p}^m,\\0 &\text{\,\,otherwise,}
\end{cases}
\end{equation}
and
\begin{equation}\label{Z prod}
Z_\mathbb{K}(s,X)=\exp\left(-\sum_{\rho}U((s-\rho)\log X)\right),
\end{equation}
where the sum is over all non-trivial zeros of $\zeta_\mathbb{K}(s)$.
\end{thm}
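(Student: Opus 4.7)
The plan is to adapt the proof of the analogous hybrid product theorem in Gonek, Hughes and Keating \cite{hybrid} from the Riemann zeta function to the Dedekind zeta setting. The analytic inputs required --- the Euler product for $\zeta_\mathbb{K}$, the Hadamard factorization of $(s-1)\zeta_\mathbb{K}(s)$, and convexity estimates for $\zeta_\mathbb{K}'/\zeta_\mathbb{K}$ on vertical lines --- all take the same form as for $\zeta$, with prime ideals and $\Lambda(\mathfrak{a})$ playing the role of rational primes and the classical von Mangoldt function. No hypothesis beyond the classical functional equation is needed.

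The starting observation is that, using $E_1(z) = \int_1^\infty t^{-1} e^{-zt}\,dt$, one can rewrite
\[ U((s-\rho)\log X) = \int_0^\infty u(x) \int_1^\infty \frac{x^{-(s-\rho) t\log X}}{t}\,dt\,dx.\]
This shows $U((s-\rho)\log X)$ is precisely the function encoding the contribution of a non-trivial zero $\rho$ in a Weil-type explicit formula whose test function is built from $u$. So $\sum_\rho U((s-\rho)\log X)$ is exactly what we should see when comparing $\log\zeta_\mathbb{K}(s)$ with $\log P_\mathbb{K}(s,X)$ via such an explicit formula.

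To prove the identity rigorously, I would start from the contour integral
\[ I := \frac{1}{2\pi i}\int_{(c)} \Bigl(-\frac{\zeta_\mathbb{K}'}{\zeta_\mathbb{K}}(s+w)\Bigr)\, \hat u(w\log X)\, \frac{X^w}{w}\,dw, \]
where $\hat u(w) = \int_0^\infty u(x) x^{-w}\,dx$ and $c$ is large. Expanding $-\zeta_\mathbb{K}'/\zeta_\mathbb{K}$ as a Dirichlet series and interchanging summation with integration evaluates $I$ to $\log P_\mathbb{K}(s,X)$ up to an $O(X^{-\sigma}\log X)$ error from passing between the sharp cutoff $\mathfrak{N}(\mathfrak{a}) \leq X$ and its smoothing by $u$. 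Shifting the contour leftwards to $\Re w = -\sigma$ through a path avoiding zeros (available by a standard zero-gap argument for $\zeta_\mathbb{K}$) picks up: the residue at $w = 0$, giving $\log \zeta_\mathbb{K}(s)$; the residue at $w = 1 - s$, negligible because $|t|\geq 2$; the residues at $w = \rho - s$ for non-trivial $\rho$, which by the representation above assemble into $\sum_\rho U((s-\rho)\log X) = -\log Z_\mathbb{K}(s,X)$; and trivial-zero residues absorbed into the same $O(X^{-\sigma}\log X)$ error.

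The main obstacle is the bound on the shifted integral along $\Re w = -\sigma$. Repeated integration by parts, using that $u$ is smooth and supported on an interval of length $\ll 1/X$, yields the decay $\hat u(w\log X) \ll (X/|w\log X|)^l$ for any $l\geq 0$; combined with the polynomial bound for $\zeta_\mathbb{K}'/\zeta_\mathbb{K}$ in vertical strips, this produces the first stated error $O(X^{l+2}/(|s|\log X)^l)$. The whole argument follows the GHK template closely; the only care needed beyond that case is to ensure that the implicit constants depend only on $\mathbb{K}$, which is immediate from the classical growth estimates for Dedekind zeta functions.
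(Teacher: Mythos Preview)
Your overall strategy --- adapt the Gonek--Hughes--Keating argument to $\zeta_{\mathbb{K}}$ --- is exactly what the paper does, and the analytic inputs you list are the right ones. But the contour integral you set up does not do what you claim. With integrand $(-\zeta_\mathbb{K}'/\zeta_\mathbb{K})(s+w)\,\hat u(w\log X)X^w/w$, the residue at $w=0$ is $(-\zeta_\mathbb{K}'/\zeta_\mathbb{K})(s)\cdot\hat u(0)$, \emph{not} $\log\zeta_\mathbb{K}(s)$; and on the Dirichlet-series side you obtain a smoothed sum $\sum_\mathfrak{a}\Lambda(\mathfrak{a})\mathfrak{N}(\mathfrak{a})^{-s}\,v(\cdot)$, \emph{not} $\log P_\mathbb{K}(s,X)$ (note the missing factor $1/\log\mathfrak{N}(\mathfrak{a})$). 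Likewise the residues at $w=\rho-s$ produce $\hat u(\cdot)/(s-\rho)$, not $U((s-\rho)\log X)$ directly. So as written the identity you assemble is an explicit formula for $\zeta_\mathbb{K}'/\zeta_\mathbb{K}(s)$, one derivative away from what is needed.

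The paper (following GHK) supplies the missing step: one first proves exactly that explicit formula for $-\zeta_\mathbb{K}'/\zeta_\mathbb{K}(s)$ (Lemma~\ref{explicit lem}), and then \emph{integrates it along the horizontal ray from $s_0$ to $+\infty$}. This integration in $s$ is what turns $\Lambda(\mathfrak{a})\mathfrak{N}(\mathfrak{a})^{-s}$ into $\Lambda(\mathfrak{a})/(\mathfrak{N}(\mathfrak{a})^{s}\log\mathfrak{N}(\mathfrak{a}))$, and what converts each zero term $\hat u(1-(s-\rho)\log X)/(s-\rho)$ into $U((s-\rho)\log X)=\int u(x)E_1((s-\rho)\log x\log X)\,dx$. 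The error $O(X^{l+2}/(|s|\log X)^l)$ also arises at this integration stage, from the pole and trivial-zero contributions, rather than from a shifted vertical contour. Finally, the $O(X^{-\sigma}\log X)$ term comes (as you correctly say) from replacing the smoothed $\tilde P_\mathbb{K}$ by the sharp-cutoff $P_\mathbb{K}$, using that at most $n=[\mathbb{K}:\mathbb{Q}]$ prime ideals lie above each rational prime. Once you insert the horizontal integration step, your outline matches the paper's proof.
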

 
Following a similar reasoning to that in \cite{hybrid} we can view formula (\ref{hybrid prod}) as a hybrid of a truncated Euler product and a truncated Hadamard product. We can then make the equivalent of their \emph{splitting conjecture} for the moments $I_k(T)$. This takes the form

\begin{conj}\label{splitting conj}Let $X,T\to\infty$ with $X\ll(\log T)^{2-\epsilon}$. Then for $k>-1/2$, we have
\begin{equation}I_k(T)\sim\Bigg(\frac{1}{T}\int_T^{2T}\left|P_\mathbb{K}\left(\frac{1}{2}+it,X\right)\right|^{2k}dt\Bigg)\times\Bigg(\frac{1}{T}\int_T^{2T}\left|Z_\mathbb{K}\left(\frac{1}{2}+it,X\right)\right|^{2k}dt\Bigg).
\end{equation}
\end{conj}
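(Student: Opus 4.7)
The plan is to follow the hybrid-product heuristic of Gonek, Hughes and Keating, adapted to the number field setting. A fully rigorous proof is not available (the analogous statement for $\zeta(s)$ is itself only a conjecture), so the goal is to isolate a clean independence hypothesis from which the splitting follows, and to verify that the error terms in Theorem \ref{hybrid prod thm} are negligible in the admissible range of $X$.

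The first step is to apply Theorem \ref{hybrid prod thm} on the line $\sigma=1/2$ for $|t|\in[T,2T]$. Choosing $l$ large in terms of $\epsilon$, the assumption $X\ll(\log T)^{2-\epsilon}$ forces $X^{l+2}/(|s|\log X)^l=o(1)$, and $X^{-1/2}\log X=o(1)$ trivially, so pointwise
\[
\zeta_\mathbb{K}(\tfrac12+it)=P_\mathbb{K}(\tfrac12+it,X)\,Z_\mathbb{K}(\tfrac12+it,X)\bigl(1+o(1)\bigr).
\]
Raising to the $2k$-th power and integrating (the hypothesis $k>-1/2$ is used to handle the neighbourhoods of zeros, where the vanishing of $\zeta_\mathbb{K}$ is matched by $Z_\mathbb{K}$ so the $o(1)$ can be absorbed after a standard zero-density argument), I reduce the conjecture to the statement that
\[
\frac{1}{T}\int_T^{2T}|P_\mathbb{K}|^{2k}|Z_\mathbb{K}|^{2k}\,dt\;\sim\;\Bigl(\frac{1}{T}\int_T^{2T}|P_\mathbb{K}|^{2k}dt\Bigr)\Bigl(\frac{1}{T}\int_T^{2T}|Z_\mathbb{K}|^{2k}dt\Bigr).
\]

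This factorisation is the crux and the main obstacle. Heuristically, $P_\mathbb{K}(1/2+it,X)$ is controlled by prime ideals of norm at most $X$ and oscillates on the scale $1/\log X$, while $Z_\mathbb{K}(1/2+it,X)$ is determined by the zeros of $\zeta_\mathbb{K}$ within $O(1/\log X)$ of $t$ and should be modelled by a random characteristic polynomial. Under the GHK philosophy, the low-lying prime data and the local zero statistics are statistically independent as $t$ varies, so the two factors decouple in the mean. Making this rigorous would require correlation estimates between prime ideal sums and zeros of $\zeta_\mathbb{K}$ well beyond anything currently available, which is exactly what is being conjectured. To lend support, I would then evaluate each factor in isolation by the tools announced in the abstract — Montgomery--Vaughan for $P_\mathbb{K}$ and random matrix theory for $Z_\mathbb{K}$ — and check consistency of the product against the proven case $k=1$ for quadratic $\mathbb{K}$ due to Motohashi in \eqref{second moment}.
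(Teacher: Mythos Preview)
Your treatment is essentially in line with the paper's: the statement is a conjecture, not a theorem, and the paper does not attempt a proof. It simply states the splitting conjecture by direct analogy with the Gonek--Hughes--Keating splitting conjecture for $\zeta(s)$, citing \cite{hybrid} for the underlying heuristic, and then lends it support by verifying the case $k=1$ for quadratic $\mathbb{K}$ (combining Theorems \ref{arithmetic moments thm} and \ref{z moment thm} against Motohashi's result \eqref{second moment}). Your write-up spells out the reduction via Theorem \ref{hybrid prod thm} and the independence heuristic more explicitly than the paper does, but the substance is the same.
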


We plan to evaluate the moments of $P_\mathbb{K}$ by using the Montgomery-Vaughan mean value theorem \cite{mv 0}.  Due to the nature of how primes split, or rather, how they are not known to split in some cases, we restrict ourselves to Galois extensions. It may be possible to remove this restriction given milder conditions on $\mathbb{K}$. In section \ref{p moment sec} we show 
\begin{thm}\label{arithmetic moments thm} Let $\mathbb{K}$ be a Galois extension of degree $n$ with Galois group $G=\mathrm{Gal} (\mathbb{K}/\mathbb{Q})$ and for a given prime $\mathfrak{p}$ let $g_{\mathfrak{p}}$ denote the index of the decomposition group $G_\mathfrak{p}$ in $G$. Let $1/2\leq c<1$, $\epsilon>0$, $k>0$ and suppose that $X$ and $T\to \infty$ with $X\ll (\log T)^{1/(1-c+\epsilon)}$. Then 
\begin{equation}\label{arith moment form}
\frac{1}{T}\int_T^{2T}\left|P_\mathbb{K}\left(\frac{1}{2}+it,\,X\right)\right|^{2k}dt\sim a(k)\chi_\mathbb{K}^{nk^2}(e^{\gamma}\log X)^{nk^2}
\end{equation}
where $\chi_\mathbb{K}$ denotes the residue of $\zeta_\mathbb{K}(s)$ at $s=1$ and 
\begin{equation}\label{arith factor}a(k)=\prod_{\mathfrak{p}\subseteq\mathcal{O}_\mathbb{K}}\bigg(\left(1-\frac{1}{\mathfrak{N}(\mathfrak{p})}\right)^{nk^2}
\left(\sum_{m\geq 0}\frac{d_{g_\mathfrak{p}k}(\mathfrak{p}^m)^2}{\mathfrak{N}(\mathfrak{p})^m}\right)^{1/g_\mathfrak{p}}\bigg)
\end{equation}
with  $d_k(\mathfrak{p}^m)=d_k(p^m)=\Gamma(m+k)/(m!\Gamma(k))$. 
\end{thm}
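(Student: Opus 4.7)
The plan is to adapt the strategy of \cite{hybrid} to the number field setting. My first step is to approximate $P_\mathbb{K}(s,X)^k$ by a Dirichlet polynomial. Rewriting the exponent in \eqref{P prod}, the contribution of prime powers $\mathfrak{p}^m$ with $m\geq 2$ is controllable, so $P_\mathbb{K}(s,X)^k$ is replaced---up to an acceptable error---by the truncated Euler product $\prod_{\mathfrak{N}(\mathfrak{p})\leq X}(1-\mathfrak{N}(\mathfrak{p})^{-s})^{-k}$. Expanding each local factor via $(1-y)^{-k}=\sum_{m\geq 0}d_k(p^m)y^m$ and truncating the resulting Dirichlet series at a length $N=X^A$ for a suitably chosen constant $A$, I obtain a polynomial $D(s)=\sum_{n\leq N}a_k(n)n^{-s}$ to which the Montgomery--Vaughan mean value theorem \cite{mv 0} applies.

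The Galois hypothesis enters crucially at this stage. Since each rational prime $p$ has exactly $g_\mathfrak{p}$ primes $\mathfrak{p}\mid p$ of common norm $\mathfrak{N}(\mathfrak{p})$, the combined local factor collapses to $(1-\mathfrak{N}(\mathfrak{p})^{-s})^{-g_\mathfrak{p} k}$, yielding Dirichlet coefficients $d_{g_\mathfrak{p} k}(\mathfrak{p}^m)$ at integers of the form $\mathfrak{N}(\mathfrak{p})^m$. The Montgomery--Vaughan theorem then gives
\[\frac{1}{T}\int_T^{2T}|D(1/2+it)|^2\,dt=\sum_{n\leq N}\frac{a_k(n)^2}{n}\Big(1+O\Big(\frac{n}{T}\Big)\Big),\]
with the error controlled by the hypothesis $X\ll(\log T)^{1/(1-c+\epsilon)}$ together with standard divisor-sum bounds for $a_k$.

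The main-term sum factors as an Euler product; distributing each rational-prime factor equally over the $g_\mathfrak{p}$ primes above it gives $\prod_{\mathfrak{N}(\mathfrak{p})\leq X}\big(\sum_m d_{g_\mathfrak{p} k}(\mathfrak{p}^m)^2/\mathfrak{N}(\mathfrak{p})^m\big)^{1/g_\mathfrak{p}}$. Multiplying and dividing by $\prod_{\mathfrak{N}(\mathfrak{p})\leq X}(1-1/\mathfrak{N}(\mathfrak{p}))^{-nk^2}$ extracts the convergent arithmetic factor $a(k)$ of \eqref{arith factor} times the divergent piece $\prod_{\mathfrak{N}(\mathfrak{p})\leq X}(1-1/\mathfrak{N}(\mathfrak{p}))^{-nk^2}$. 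This last factor is evaluated using Mertens' theorem for $\mathbb{K}$, in the form $\prod_{\mathfrak{N}(\mathfrak{p})\leq X}(1-1/\mathfrak{N}(\mathfrak{p}))^{-1}\sim e^\gamma\chi_\mathbb{K}\log X$, producing the desired asymptotic $\chi_\mathbb{K}^{nk^2}(e^\gamma\log X)^{nk^2}$.

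The main obstacle I anticipate is the passage from $P_\mathbb{K}(s,X)^k$ to the approximating Dirichlet polynomial $D(s)$ for real non-integer $k$: unlike the integer case the exponential cannot be truncated algebraically, and one must employ an $L^2$ approximation combined with a Cauchy--Schwarz transfer of asymptotics. The hypothesis $k>0$ enters here to ensure non-negative coefficients $d_k(p^m)$ with tractable divisor-type bounds, which make the tail estimates feasible.
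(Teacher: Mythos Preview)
Your overall strategy---expand $P_\mathbb{K}(s,X)^k$ as a Dirichlet series supported on $X$-smooth norms, truncate, apply Montgomery--Vaughan, factor the diagonal sum as an Euler product, peel off $\prod_{\mathfrak{N}(\mathfrak{p})\leq X}(1-1/\mathfrak{N}(\mathfrak{p}))^{-nk^2}$, and finish with Mertens for $\mathbb{K}$---is exactly the paper's. Your use of the Galois hypothesis to collapse the local factor at each rational prime to $(1-\mathfrak{N}(\mathfrak{p})^{-s})^{-g_\mathfrak{p} k}$ is also correct.

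Two of your steps, however, do not work as written and are handled differently in the paper.

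\emph{The replacement $P_\mathbb{K}^k\to\prod_{\mathfrak{N}(\mathfrak{p})\leq X}(1-\mathfrak{N}(\mathfrak{p})^{-s})^{-k}$.} This is not a $1+o(1)$ approximation on $\sigma=1/2$. The logarithm of the ratio is $k\sum_{m\geq 2}\sum_{\mathfrak{N}(\mathfrak{p})\leq X,\ \mathfrak{N}(\mathfrak{p})^m>X}(m\,\mathfrak{N}(\mathfrak{p})^{ms})^{-1}$, and already the $m=2$ piece has modulus $\tfrac{k}{2}\sum_{\sqrt{X}<\mathfrak{N}(\mathfrak{p})\leq X}\mathfrak{N}(\mathfrak{p})^{-1}\to\tfrac{k}{2}\log 2\neq 0$ by the prime-ideal Mertens theorem. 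So the pointwise ratio is bounded but oscillating, and there is no a priori reason the mean squares agree. The paper never makes this replacement: it writes $P_\mathbb{K}^k$ \emph{exactly} as $\sum_{l\in\mathcal{W}(X)}\gamma_k(l)l^{-s}$ with multiplicative $\gamma_k$ satisfying $0\leq\gamma_k\leq d_k$-type bounds and $\gamma_k(\mathfrak{N}(\mathfrak{p})^m)=d_{g_\mathfrak{p} k}(\mathfrak{p}^m)$ whenever $\mathfrak{N}(\mathfrak{p})^m\leq X$. Only \emph{after} Montgomery--Vaughan has produced $\sum_{l}\gamma_k(l)^2/l$ does one compare with the full local factors, and at that stage the discrepancy is a clean $1+O(X^{-1/2+\epsilon})$ on the Euler product.

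\emph{Truncation at $N=X^A$.} For fixed $A$ this gives no usable pointwise tail on $\sigma=1/2$ (indeed $\sum_{l\in\mathcal{W}(X)}d_k(l)l^{-1/2}\asymp\exp(cX^{1/2}/\log X)$), and the hypothesis $X\ll(\log T)^{1/(1-c+\epsilon)}$ then plays no genuine role in your scheme---the Montgomery--Vaughan error $O(N/T)$ is already negligible for polylogarithmic $N$. The paper truncates instead at $T^{1/2}$: Rankin's trick bounds the tail pointwise by
\[
T^{-\epsilon/2}\prod_{\mathfrak{N}(\mathfrak{p})\leq X}\bigl(1-\mathfrak{N}(\mathfrak{p})^{\epsilon-c}\bigr)^{-k}=T^{-\epsilon/2}\exp\!\Big(O\big(X^{1-c+\epsilon}/\log X\big)\Big),
\]
and the hypothesis is precisely what forces $X^{1-c+\epsilon}\ll\log T$, making this $\ll T^{-\epsilon/4}$. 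With a pointwise tail bound in hand, the passage from $P_\mathbb{K}^k$ to the truncated polynomial is a one-line Cauchy--Schwarz, and no separate $L^2$ approximation is required.
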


In considering the moments of $Z_\mathbb{K}$ for Galois extensions we first express $\zeta_\mathbb{K}(s)$ as a product of Artin $L$-functions. For each individual $L$-function we then follow the heuristic argument given in section 4 of \cite{hybrid}. This essentially allows us to write the moments of $Z_\mathbb{K}$ as an expectation over the unitary group. We then assume a certain quality of independence between the Artin $L$-functions, namely, that the matrices associated to the zeros of $L(s,\chi,\mathbb{K}/\mathbb{Q})$ at height $T$, act independently for distinct $\chi$. This allows for a factorisation of the expectation and we are led to 

\begin{conj}\label{z moment conj}Let $\mathbb{K}$ be a Galois extension of degree $n$. Suppose that $X,T\to\infty$ with $X\ll (\log T)^{2-\epsilon}$. Then for $k>-1/2$ we have
\begin{multline}
\frac{1}{T}\int_T^{2T}\left|Z_\mathbb{K}\left(\frac{1}{2}+it,X\right)\right|^{2k}dt\\\sim (e^\gamma\log X)^{-nk^2} \prod_\chi
\frac{G(\chi(1)k+1)^2}{G(2\chi(1)k+1)}\Big(\log \big(q(\chi)T^{d_\chi}\big)\Big)^{\chi(1)^2k^2}
\end{multline}
where the product is over the irreducible characters of $\mathrm{Gal}(\mathbb{K}/\mathbb{Q})$, $G$ is the Barnes $G$-function,  
 $q(\chi)$ is the conductor of $L(s,\chi,\mathbb{K}/\mathbb{Q})$ and $d_\chi$ is its dimension.
\end{conj}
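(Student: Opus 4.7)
The plan is to mirror the heuristic of Gonek, Hughes and Keating, lifted from $\zeta(s)$ to $\zeta_\mathbb{K}(s)$ through the Artin factorisation. First I would write
\[
\zeta_\mathbb{K}(s)=\prod_\chi L(s,\chi,\mathbb{K}/\mathbb{Q})^{\chi(1)},
\]
where $\chi$ ranges over the irreducible characters of $\mathrm{Gal}(\mathbb{K}/\mathbb{Q})$. Since the non-trivial zeros of $\zeta_\mathbb{K}$ are, with multiplicity, precisely the union of the non-trivial zeros of the $L(s,\chi,\mathbb{K}/\mathbb{Q})$, definition (\ref{Z prod}) gives the corresponding multiplicative decomposition
\[
Z_\mathbb{K}(s,X)=\prod_\chi Z_\chi(s,X)^{\chi(1)},\qquad Z_\chi(s,X):=\exp\Big(-\sum_{\rho_\chi}U\bigl((s-\rho_\chi)\log X\bigr)\Big),
\]
with $\rho_\chi$ ranging over the non-trivial zeros of $L(s,\chi,\mathbb{K}/\mathbb{Q})$. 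Hence $|Z_\mathbb{K}(1/2+it,X)|^{2k}=\prod_\chi|Z_\chi(1/2+it,X)|^{2\chi(1)k}$, and the computation reduces to understanding each factor together with their joint distribution.

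For a single Artin $L$-function I would repeat the Section~4 argument of \cite{hybrid} almost verbatim. The essential input is that the mean density of zeros of $L(s,\chi,\mathbb{K}/\mathbb{Q})$ near height $T$ is $\frac{1}{2\pi}\log\bigl(q(\chi)T^{d_\chi}\bigr)$, so the appropriate random-matrix model is the circular unitary ensemble with $N_\chi\sim\log\bigl(q(\chi)T^{d_\chi}\bigr)$ eigenangles. The support condition on $u$ restricts the sum defining $Z_\chi$ to a window of size $O(1/\log X)$ around $1/2+it$, and transcribing the characteristic-polynomial computation of Keating--Snaith then yields, for each $\chi$, the heuristic asymptotic
\[
\frac{1}{T}\int_T^{2T}\bigl|Z_\chi(1/2+it,X)\bigr|^{2\chi(1)k}\,dt\;\sim\;(e^\gamma\log X)^{-\chi(1)^2k^2}\frac{G(\chi(1)k+1)^2}{G(2\chi(1)k+1)}\bigl(\log(q(\chi)T^{d_\chi})\bigr)^{\chi(1)^2k^2}.
\]

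The next step is the independence input: I would assume that as $T\to\infty$ the eigenvalue statistics of the random-matrix models associated to distinct $\chi$ decouple, so that the time-average of the product $\prod_\chi|Z_\chi|^{2\chi(1)k}$ factors as the product of the individual time-averages. Multiplying the asymptotics above over $\chi$ and using the degree identity $\sum_\chi\chi(1)^2=n$ to consolidate the exponent of $e^\gamma\log X$ then produces the conjectured formula.

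The central obstacle, and really the whole content of the conjecture, is this independence assumption: there is no random-matrix theorem one can cite directly, so the step must be presented as a conjectural statistical analogue of Rudnick--Sarnak-type correlation hypotheses applied to distinct Artin $L$-functions. A secondary technical point is to verify that the error terms arising from the smoothing $u$ and from the range $X\ll(\log T)^{2-\epsilon}$ survive the factorisation; but these already appear in \cite{hybrid} in the case of $\zeta(s)$, and I expect the same estimates to transfer with only bookkeeping changes.
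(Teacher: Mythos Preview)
Your proposal is correct and matches the paper's own heuristic essentially line by line: the paper factorises $\zeta_\mathbb{K}$ via Artin $L$-functions, rewrites $|Z_\mathbb{K}(1/2+it,X)|^{2k}$ as $\prod_\chi\prod_{\gamma_n(\chi)}\phi(k\chi(1),\theta_n(\chi))$ through the $\mathrm{Ci}$-function identity, models each family of zeros by a CUE of size $N(\chi)\sim\log\bigl(q(\chi)T^{d_\chi}\bigr)$, assumes the matrices for distinct $\chi$ are independent so the expectation factors, applies Theorem~4 of \cite{hybrid} to each factor, and collects exponents using $\sum_\chi\chi(1)^2=n$. The only things the paper makes explicit that you leave implicit are the standing assumptions of the Artin conjecture and the extended Riemann hypothesis, and the invocation of Theorem~4 of \cite{hybrid} rather than a direct appeal to Keating--Snaith.
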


\noindent We remark that the dimension of an $L$-function is defined in \cite{mur}, and for Artin $L$-functions is simply the number of Gamma functions appearing in its completed form.

By combining this with Theorem \ref{arithmetic moments thm} and Conjecture \ref{splitting conj} we see that the factors of $e^\gamma\log X$ cancel, as expected, and we acquire a full conjecture for the moments of $\zeta_\mathbb{K}(1/2+it)$ when $\mathbb{K}$ is Galois. 
We note that after using $\sum_\chi \chi(1)^2=|\mathrm{Gal}(\mathbb{K}/\mathbb{Q})|$$=n$ the resulting expression in this conjecture is $\sim c\log^{nk^2} T$ for some constant $c$. Now, in the paper \cite{con farm}, Conrey and Farmer express the idea that the mean square of $\zeta(s)^k$ should be a multiple of the sum $\sum_{n\leq T}d_k(n)^2n^{-1}$, and that this multiple is the measure of how many Dirichlet polynomials are needed to capture the full moment. Their reasoning is based on a combination of the Montgomery-Vaughan mean value Theorem and the form of the sixth and eighth moment conjectures given in \cite{conrey gonek}. Assuming this idea applies to other $L$-functions, we note a result of Chandrasekharan and Narasimhan \cite{chandra nara}. They showed that for a Galois extension of degree $n$,
\begin{equation}\sum_{m\leq T}f_\mathbb{K}(m)^2\sim cT\log^{n-1} T, 
\end{equation}
where $f_\mathbb{K}(m)$ is the number of integral ideals of norm $m$ and $c$ is some constant. Applying partial summation we thus gain a result which supports our conjecture, at least in the case $k=1$ (we note the results of \cite{chandra nara} should easily extend to general $k$, and remain consistent with our conjecture). Alternatively, one could view our conjecture as adding support to the idea of Conrey and Farmer.

In this paper a particular emphasis is placed on quadratic extensions, so let us first fix our notation. We note that  
if $d_\mathbb{K}$ is the discriminant of a quadratic field and $\chi(n)=(d_\mathbb{K}|n)$ where $(\,\cdot\,|\,\cdot\,)$ is the Kronecker character, then $\chi$ is a real Dirichlet character mod 
\begin{equation}\label{q}q=\begin{cases}4|d_\mathbb{K}| &\text{if}\,\,d_\mathbb{K}\equiv 2 (\!\!\!\!\!\!\mod 4), \\ |d_\mathbb{K}|&\text{otherwise}\end{cases}\end{equation}
and $\zeta_\mathbb{K}(s)=\zeta(s)L(s,\chi)$. In section \ref{z moment sec} we prove Conjecture \ref{z moment conj} in the lowest order case. That is, we prove

\begin{thm}\label{z moment thm}Let $\mathbb{K}$ be a quadratic extension. Suppose that $X,T\to\infty$ with $X\ll (\log T)^{2-\epsilon}$. Then
\begin{equation}\frac{1}{T}\int_T^{2T}\left|Z_\mathbb{K}\left(\frac{1}{2}+it,X\right)\right|^{2}dt\sim \frac{\log T\cdot\log qT}{(e^\gamma\log X)^{2}}.
\end{equation}
\end{thm}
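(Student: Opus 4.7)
The plan is first to use Theorem~\ref{hybrid prod thm} on the critical line to reduce the mean square of $Z_\mathbb{K}$ to that of the ratio $\zeta_\mathbb{K}/P_\mathbb{K}$. For $|t|\asymp T$ and $X\ll(\log T)^{2-\epsilon}$, the error factor $1+O(X^{l+2}/(T\log X)^l)+O(X^{-1/2}\log X)$ is made $o(1)$ by taking $l$ sufficiently large, and combined with standard mean-square upper bounds for $\zeta_\mathbb{K}$ and for $P_\mathbb{K}^{\pm1}$ (the latter via the Montgomery--Vaughan method that underlies Theorem~\ref{arithmetic moments thm}) this gives
\[
\frac{1}{T}\int_T^{2T}|Z_\mathbb{K}(1/2+it,X)|^2\,dt = \frac{1}{T}\int_T^{2T}\left|\frac{\zeta_\mathbb{K}(1/2+it)}{P_\mathbb{K}(1/2+it,X)}\right|^2 dt + o\!\left(\frac{\log T\log qT}{(\log X)^2}\right).
\]

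Next, I would write $1/P_\mathbb{K}(s,X)=\exp(-A_X(s))$ with $A_X(s)=\sum_{\mathfrak{N}(\mathfrak{a})\leq X}\Lambda(\mathfrak{a})\mathfrak{N}(\mathfrak{a})^{-s}/\log\mathfrak{N}(\mathfrak{a})$, expand the exponential as a power series, and truncate at length $T^\theta$ for some small fixed $\theta>0$. This produces a Dirichlet polynomial mollifier $M(s)=\sum_{n\leq T^\theta}\beta(n)n^{-s}$; since $X$ is only a power of $\log T$, the truncated tail is small in mean square on the critical line. For quadratic $\mathbb{K}$, $\zeta_\mathbb{K}(s)=\zeta(s)L(s,\chi)$, so the analytic conductor of $\zeta_\mathbb{K}$ at height $T$ is $\asymp qT^2$. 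I would then apply the approximate functional equation for $\zeta_\mathbb{K}$ with balancing length $\sqrt{qT^2}/(2\pi)$, expand $|\zeta_\mathbb{K} M|^2$, and integrate in $t$: off-diagonal terms are absorbed into the admissible error, and the leading contribution comes from the diagonal, a weighted sum
\[
\sum_{nm'=n'm}\frac{f_\mathbb{K}(n)f_\mathbb{K}(n')\beta(m)\overline{\beta(m')}}{\sqrt{nn'mm'}}\cdot(\text{logarithmic weights from the AFE}).
\]

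This diagonal factorises prime-by-prime: at each prime $p\leq X$ the local factor produced by $|M|^2$ cancels the corresponding local piece of the unmollified diagonal, and the resulting product over such primes reduces by Mertens' theorem to a factor contributing $(e^\gamma\log X)^{-2}$. The two remaining logarithmic factors $\log T$ and $\log qT$ come from the respective analytic conductors of $\zeta$ and $L(\cdot,\chi)$, exactly as in Motohashi's second moment~\eqref{second moment}. The main technical obstacle is to control both the truncation error of the exponential mollifier and the off-diagonal contributions uniformly in $X$ throughout the whole range $X\ll(\log T)^{2-\epsilon}$: no logarithmic loss can be tolerated, so the off-diagonal must be estimated carefully, using the Montgomery--Vaughan mean-value theorem for the short Dirichlet polynomial part wherever possible.
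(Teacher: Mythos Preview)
Your overall architecture matches the paper's: reduce $|Z_\mathbb{K}|^2$ to $|\zeta_\mathbb{K}\,P_\mathbb{K}^{-1}|^2$ via the hybrid product, expand $P_\mathbb{K}^{-1}$ as a short Dirichlet polynomial $M(s)=\sum_{n\leq T^\theta}\alpha(n)n^{-s}$ supported on $X$-smooth numbers, and then evaluate $\int|\zeta_\mathbb{K}|^2|M|^2$. The paper does exactly this in Section~\ref{z moment sec} (see Lemmas~\ref{Q lem}--\ref{alpha lem} for the mollifier and Proposition~\ref{main term prop} for the main-term computation).

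The genuine gap is in the step you summarise as ``off-diagonal terms are absorbed into the admissible error''. For a degree-two $L$-function such as $\zeta_\mathbb{K}=\zeta\cdot L(\cdot,\chi)$, the twisted second moment $\int|\zeta_\mathbb{K}|^2|M|^2$ is \emph{not} obtained simply by the approximate functional equation plus diagonal extraction; the near-diagonal (shifted convolution) terms with $nm'\neq n'm$ but $nm'/n'm$ close to $1$ contribute at the same order as the diagonal and must be analysed explicitly. The Montgomery--Vaughan mean-value theorem controls moments of Dirichlet \emph{polynomials}, not the interaction between a polynomial and $|\zeta_\mathbb{K}|^2$, so it cannot dispose of these terms. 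The paper does not re-do this analysis here: it imports the twisted second moment formula from \cite{me} (stated as Theorem~\ref{twisted thm}), which gives an exact main term of the shape $Z_{\alpha,\beta,\gamma,\delta,h,k}(0)$ plus five permuted pieces, together with a power-saving error $E(T)\ll T^{3/4+\epsilon}(hk)^{7/8+\epsilon}$ valid for $hk\leq T^{2/11-\epsilon}$. Without such an input (or an equivalent shifted-convolution estimate for $f_\mathbb{K}$), the off-diagonal claim in your outline is unjustified.

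There is a second, smaller gap at the main-term stage. Even granted the twisted formula, the evaluation is not a clean prime-by-prime cancellation: the diagonal weight at $h,k$ involves the multiplicative function $\delta(\cdot)$ built from the splitting type of primes, and one must also show that the ``secondary'' pieces $Z'_{0,0,0,0,h_k/q,k_h}(0,\chi)$ arising from the $\varkappa_L$-swaps contribute only $O((\log X)^{-2})$. The paper handles this via a M\"obius inversion over $\mathrm{Im}(\mathfrak{N})$ (formula~\eqref{S mu}) followed by an explicit Euler-product computation of the local factors $G(p)$ in Proposition~\ref{main term prop}, and a separate estimate for the $Z'$ terms at the end of Section~\ref{z moment sec}. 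Your sketch ``the local factor produced by $|M|^2$ cancels the corresponding local piece'' is the right intuition, but the actual verification, including the different behaviour of $\alpha(p^2)$ on the ranges $p\leq\sqrt{X}$ and $\sqrt{X}<p\leq X$, is where the work lies.
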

\noindent By combining this with Theorem \ref{arithmetic moments thm} and then comparing with Motohashi's result (\ref{second moment}), we see that Conjecture \ref{splitting conj} is true for $k=1$ in the case of quadratic extensions. 

Recently, an alternative method for conjecturing moments of primitive $L$-functions was given by Conrey et al. in \cite{cfkrs}. This comes in the form of a recipe. By using a result of the author \cite{me}, we add a modification to this recipe which allows for non-primitive $L$-functions.  In section \ref{recipe sec}, we use this modified recipe to reproduce the full moments conjecture for quadratic extensions. This is given by      

\begin{conj}\label{quad conj}Let $\mathbb{K}$ be a quadratic extension and let $a(k)$ be given by (\ref{arith factor}). Then
\begin{equation}I_k(T)\sim a(k)L(1,\chi)^{2k^2}\left(\frac{G(k+1)^2}{G(2k+1)}\right)^2\left(\log T\cdot\log qT\right)^{k^2}. 
\end{equation}
\end{conj}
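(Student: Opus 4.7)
The plan is to apply the modified CFKRS recipe developed in Section \ref{recipe sec} directly to the factorisation $\zeta_{\mathbb{K}}(s)=\zeta(s)L(s,\chi)$ and read off the leading-order behaviour in the limit of vanishing shifts. I would begin with the shifted moment
$$M_\mathbb{K}(\boldsymbol\alpha,\boldsymbol\beta;T)=\frac{1}{T}\int_T^{2T}\prod_{i=1}^{k}\zeta_\mathbb{K}(\tfrac12+\alpha_i+it)\zeta_\mathbb{K}(\tfrac12-\beta_i-it)\,dt,$$
expand each Dedekind factor as $\zeta\cdot L(\cdot,\chi)$, and treat the $2k$ Riemann zeta shifts and the $2k$ Dirichlet $L$-shifts as two independent primitive families to which the standard primitive recipe can be applied.

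Following the recipe, each family contributes an approximate functional equation and one sums over pairs of swap subsets $(S_\zeta,S_L)\subseteq\{1,\dots,k\}\times\{1,\dots,k\}$. Because the functional equations of $\zeta(s)$ and $L(s,\chi)$ have distinct analytic conductors $T$ and $qT$, their gamma factors decouple, and the $2^{2k}$-term swap sum factorises into two independent $2k$-swap sums of the Conrey--Farmer--Keating--Rubinstein--Snaith type. Applying the standard combinatorial evaluation to each family in turn yields a polynomial in $\log T$ of degree $k^2$ times a polynomial in $\log qT$ of the same degree, and extraction of the top coefficient via the usual Barnes $G$-function identity produces the analytic factor $(G(k+1)^2/G(2k+1))^2(\log T\cdot\log qT)^{k^2}$.

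The arithmetic content $a(k)L(1,\chi)^{2k^2}$ comes out of the diagonal step of the recipe. A naive independent treatment of the two families would assign to each rational prime $p$ a local factor equal to the product of two separate primitive contributions; this is incorrect at primes with $\chi(p)\neq 0$, where the two Euler factors genuinely combine into a single $\mathfrak{p}$-Euler factor of $\zeta_\mathbb{K}$. The modification from \cite{me} replaces the naive product by the correct joint local factor, and after regularising against $\prod_p(1-1/p)^{-k^2}(1-\chi(p)/p)^{-k^2}$ one recovers precisely the prime-ideal Euler product in (\ref{arith factor}) (with $n=2$ and the correct exponents $g_\mathfrak{p}$) together with the residue factor $L(1,\chi)^{2k^2}=\chi_\mathbb{K}^{2k^2}$ appearing in Theorem \ref{arithmetic moments thm}.

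I expect the principal obstacle to be a careful justification of this joint local factor across the three prime types in a quadratic extension---split ($g_\mathfrak{p}=2$), inert ($g_\mathfrak{p}=1$), and ramified---so that the non-primitive recipe reproduces the local data of Theorem \ref{arithmetic moments thm} rather than a spurious product. Fortunately the quadratic case is favourable: each prime admits only finitely many splitting behaviours, so the verification reduces to a finite local check per prime. As an independent sanity test, the final formula must agree with the combination of Theorem \ref{arithmetic moments thm}, Conjecture \ref{z moment conj} and Conjecture \ref{splitting conj}, in which the factors $(e^\gamma\log X)^{\pm 2k^2}$ cancel and the Artin factorisation supplies one copy of $G(k+1)^2/G(2k+1)$ per irreducible character of $\mathrm{Gal}(\mathbb{K}/\mathbb{Q})$; this cross-check pins down every constant in Conjecture \ref{quad conj} and provides a reliable guide at each step of the recipe calculation.
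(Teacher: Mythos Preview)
Your plan is essentially the paper's derivation in Section~\ref{recipe sec}: introduce independent shift vectors $\boldsymbol{\alpha}$ for the $\zeta$-factors and $\boldsymbol{\beta}$ for the $L(\cdot,\chi)$-factors, apply the functional-equation step of the recipe to each family separately (this is the modification, and it is what kills the $q^{-it}$-oscillating cross-terms), take a \emph{single} joint diagonal $\prod m_im_i'=\prod n_in_i'$, apply Lemma~\ref{2.5.1} twice to produce $(G(k+1)^2/G(2k+1))^2(\log T\cdot\log qT)^{k^2}$, and then verify the arithmetic constant by a local check at split, inert and ramified primes.

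One point to straighten out before you write the details: the modification justified by Theorem~\ref{twisted thm} operates at the swap/functional-equation stage (it tells you to retain exactly the $\binom{2k}{k}^2$ terms with balanced $\varkappa$-factors \emph{within each family}), not at the local-factor stage. There is no ``naive product of two separate primitive contributions'' to repair: once you impose the single joint diagonal, the correct local factor
\[
B_p(\boldsymbol{0},\boldsymbol{0})=\int_0^1\Bigl(1-\tfrac{e(\theta)}{p^{1/2}}\Bigr)^{-k}\Bigl(1-\tfrac{e(-\theta)}{p^{1/2}}\Bigr)^{-k}\Bigl(1-\tfrac{\chi(p)e(\theta)}{p^{1/2}}\Bigr)^{-k}\Bigl(1-\tfrac{\overline{\chi}(p)e(-\theta)}{p^{1/2}}\Bigr)^{-k}d\theta
\]
falls out automatically, and the paper simply evaluates this integral in the three cases $\chi(p)=1,-1,0$ to match the $d_{g_\mathfrak{p}k}$-sums in (\ref{arith factor}). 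Your cross-check against Theorem~\ref{arithmetic moments thm} and Conjectures~\ref{splitting conj}--\ref{z moment conj} is exactly the consistency the paper relies on.
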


Finally, in section \ref{non-prim sec} we attempt to generalise the main ideas of this paper to non-primitive $L$-functions. We restrict ourselves to reasonable $L$-functions, which is to say, we consider functions of the form 
\begin{equation}L(s)=\sum \alpha_L(n)n^{-s}=\prod_{j=1}^mL_j(s)^{e_j}
\end{equation}
where $e_j\in\mathbb{N}$ and the $L_j(s)$ are distinct, primitive members of the Selberg class $\mathcal{S}$. 
We assume that we have the functional equation
\begin{equation}\Lambda_{L_j}(s):=\gamma_{L_j}(s)L_j(s)=\epsilon_j\overline{\Lambda}_{L_j}(1-s)
\end{equation}
\noindent where $\epsilon_j$ is some number of absolute value 1 and 
\begin{equation}\gamma_{L_j}(s)=Q_j^{s/2}\prod_{i=1}^{d_j}\Gamma(s/2+\mu_{i,j})
\end{equation}
with the $\{\mu_{i,j}\}$ stable under complex conjugation. We also require that the `convolution' $L$-functions
\begin{equation}M_j(s)=\sum_{n=1}^\infty \frac{|\alpha_{L_j}(n)|^2}{n^s}
\end{equation}
 behave reasonably, in particular, that they have an analytic continuation. We then claim

\newpage
\begin{conj}\label{non-prim conj}With the notation as above, let $\alpha_{L,k}(n)$ be the Dirichlet coefficients of $L(s)^k$. Then for $k>-1/2$,
\begin{equation}\label{non-prim eq}\frac{1}{T}\int_0^T\left|L\left(\frac{1}{2}+it\right)\right|^{2k}dt\sim a_L(k)\prod_{j=1}^m\frac{G^2(e_jk+1)}{G(2e_jk+1)}\left(\log \big(Q_jT^{d_j}\big)\right)^{(e_jk)^2}
\end{equation}
where 
\begin{equation}a_L(k)=\prod_p \left(1-\frac{1}{p}\right)^{n_Lk^2}\sum_{n=0}^\infty \frac{|\alpha_{L,k}(p^n)|^2}{p^n}
\end{equation}
with $n_L=\sum_{j=1}^m e^2_j$.
\end{conj}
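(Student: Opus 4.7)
The plan is to transplant the derivation of Conjecture \ref{z moment conj} to the more general setting of a non-primitive $L$-function built from primitives in $\mathcal{S}$. First I would establish a hybrid product decomposition $L(s) = P_L(s,X) Z_L(s,X)(1+o(1))$ by applying the Bombieri--Hejhal style argument behind Theorem \ref{hybrid prod thm} to each primitive factor $L_j$ (members of $\mathcal{S}$ by hypothesis satisfy the analytic conditions needed for the explicit formula) and then raising to the power $e_j$ and multiplying. This yields
\begin{equation*}
P_L(s,X) = \exp\!\Bigg(\sum_{p^m \leq X} \frac{\alpha_L(p^m)}{m\, p^{ms}}\Bigg), \qquad Z_L(s,X) = \prod_j Z_{L_j}(s,X)^{e_j},
\end{equation*}
and then I would impose a splitting conjecture analogous to Conjecture \ref{splitting conj}, asserting that $I_k(T)$ factors asymptotically as a product of $P_L$- and $Z_L$-moments.

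For the moments of $P_L$ I would invoke the Montgomery--Vaughan mean value theorem exactly as in the proof of Theorem \ref{arithmetic moments thm}. The critical arithmetic input is Selberg's orthonormality conjecture: for distinct primitives $L_i, L_j \in \mathcal{S}$,
\begin{equation*}
\sum_{p \leq X} \frac{\alpha_{L_i}(p)\overline{\alpha_{L_j}(p)}}{p} = \delta_{ij}\log\log X + O(1).
\end{equation*}
Expanding $|P_L(1/2+it,X)|^{2k}$ as a Dirichlet series and applying Montgomery--Vaughan reduces the mean square to a sum over prime powers; the orthogonality above forces the cross-terms in $\sum_j e_j \log|L_j|$ to drop out on the logarithmic scale, leaving the main term $(e^\gamma \log X)^{n_L k^2}$ with $n_L = \sum_j e_j^2$, while the local factors assemble into $a_L(k)$.

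For the moments of $Z_L$ I would follow the heuristic of Section 4 of \cite{hybrid}: model the zeros of each primitive $L_j$ near height $T$ by the eigenangles of a CUE matrix $U_{N_j}$ of dimension $N_j \sim \log(Q_j T^{d_j})$, and assume, as a quality-of-independence hypothesis analogous to the one made for Artin $L$-functions below Conjecture \ref{z moment conj}, that the matrices attached to distinct primitive $L_j$ are statistically independent. The expectation then factorises over $j$, and the Keating--Snaith formula
\begin{equation*}
\mathbb{E}\bigl|\det(I - U_{N_j})\bigr|^{2 e_j k} \sim \frac{G(e_j k+1)^2}{G(2 e_j k+1)} N_j^{(e_j k)^2}
\end{equation*}
produces each Barnes $G$-factor and each log-power in the stated asymptotic. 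Combining with the $P_L$-moment, the $(e^\gamma \log X)^{n_L k^2}$ factors cancel, as they must, and one recovers (\ref{non-prim eq}).

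The principal obstacle is the independence step: even Conjecture \ref{z moment conj} needed an independence assumption between Artin $L$-functions, and in the present generality there is still less a priori justification for treating the matrix models of distinct primitives in $\mathcal{S}$ as independent, so this is really where the conjectural content lives. A secondary difficulty is that Selberg's orthonormality conjecture is only conjectural, so the identification of the exponent $n_L = \sum_j e_j^2$ and the factor $a_L(k)$ in the $P_L$-moment rests on unproven input. Everything else — Bombieri--Hejhal, Montgomery--Vaughan, the Keating--Snaith asymptotic, and the cancellation of $e^\gamma \log X$ between the two halves — is a direct transcription of the arguments used for the Dedekind case.
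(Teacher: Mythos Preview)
Your proposal is a coherent heuristic derivation, and it matches one of the two lines of support the paper offers, but not the primary one. The paper's main route to Conjecture~\ref{non-prim conj} is via the \emph{modified moments recipe} of Section~\ref{recipe sec}: for $L(s)=\prod_j L_j(s)^{e_j}$ one inserts the approximate functional equation for each $L_j$, observes that cross-terms carry oscillating factors $(Q_jQ_{j'})^{-it}$ and discards them, so the first step of the recipe is applied to each primitive factor separately; the resulting contour integral then factorises exactly as in \eqref{contour int}, producing the product of Barnes $G$-factors and the logs $\log(Q_jT^{d_j})$. The hybrid-product/RMT argument you give is mentioned in the paper only in a single paragraph, as a second, parallel justification (``assume we have a hybrid product for $L(s)$; since the $L_j$ are distinct their zeros are uncorrelated, so the associated matrices act independently and the expectation factorises'').

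For the arithmetic factor, the paper does not compute the $P_L$-moment via Montgomery--Vaughan as you outline; instead it invokes the Conrey--Farmer philosophy and shows, assuming Selberg's orthonormality and regularity conjectures, that the convolution $M(s)=\sum_n |\alpha_{L,k}(n)|^2 n^{-s}$ has a pole of order $n_Lk^2$ at $s=1$ with residue $a_L(k)$, whence $\sum_{n\le T}|\alpha_{L,k}(n)|^2/n\sim (a_L(k)/(n_Lk^2)!)\log^{n_Lk^2}T$. This is close in spirit to your $P_L$-computation but is framed as supporting evidence rather than as the derivation itself. Your approach has the virtue of making the cancellation of $(e^\gamma\log X)^{n_Lk^2}$ explicit and of paralleling the Dedekind case closely; the paper's recipe approach has the virtue of producing the conductor dependence $\log(Q_jT^{d_j})$ directly without needing the splitting conjecture.
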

We remark that if $L(s)=\zeta_\mathbb{K}(s)$ with $\mathbb{K}$ Galois and we have a factorisation in terms of Dirichlet series, then the residue term $\chi_\mathbb{K}^{nk^2}$ of (\ref{arith moment form}) is a factor of $a_L(k)$.  

Note that the right hand side of (\ref{non-prim eq}) is $\sim (a_L(k)g_L(k)/\Gamma(n_Lk^2+1))\log^{n_Lk^2} T$ where
\begin{equation}g_L(k)=\Gamma(n_Lk^2+1)\prod_{j=1}^m\frac{G^2(e_jk+1)}{G(2e_jk+1)}d_j^{(e_jk)^2}.
\end{equation}
 As previously noted, one expects the mean square of $L(1/2+it)^k$ to be asymptotic to a multiple of the sum $\sum_{n\leq T}|\alpha_{L,k}(n)|^2n^{-1}$. On the assumption of Selberg's conjectures, we give an argument showing that 
\begin{equation}\sum_{n\leq T}\frac{|\alpha_{L,k}(n)|^2}{n}\sim \frac{a_L(k)}{(n_Lk^2)!}\log^{n_Lk^2} T,
\end{equation}
which adds further support to our conjecture. We also note that for integral $k$, 
\begin{equation}g_L(k)=\binom{n_Lk^2}{(e_1k)^2,\ldots,(e_mk)^2}\prod_{j=1}^m g(e_jk) d_j^{(e_jk)^2}
\end{equation}
where the first factor is the multinomial coefficient and the function $g$ is defined by $g(n)/n^2!$$=G(n+1)^2/G(2n+1)$. It is shown in \cite{con farm} that $g(n)$ is an integer, and hence $g_L(k)$ is an integer for integral $k$.

\subsection*{Acknowledgments} 

I'd like to thank Caroline Turnage-Butterbaugh  and Chris Hughes for their useful comments and suggestions.

\section{The hybrid product}\label{hybrid sec}
In this section we prove Theorem \ref{hybrid prod thm}. For this we require a  smoothed version of the explicit formula which is given in Lemma \ref{explicit lem}. The proof of this follows similarly to that of the classical explicit formula and uses the following two Lemmas. We omit their proofs since they are easily adapted from the results of \cite{mv} by using well known properties of the Dedekind zeta function. 

\begin{lem}\label{N_1 lem}Suppose $t\neq\gamma$ for any zero $\rho=\beta+i\gamma$, $0\leq\beta\leq 1$, of $\zeta_\mathbb{K}(s)$. Then
\[\sum_{\rho}\frac{1}{1+(t-\gamma)^2}\ll \log T.\]
This implies $N_1(t):=|\{\gamma :|t-\gamma|<1\}|\ll \log t$.
\end{lem}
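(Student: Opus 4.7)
The plan is to mimic the standard proof of the analogous estimate for $\zeta(s)$, adapting it by using the Hadamard factorization of the completed Dedekind zeta function. Recall that $\Lambda_\mathbb{K}(s) = |d_\mathbb{K}|^{s/2}\bigl(\pi^{-s/2}\Gamma(s/2)\bigr)^{r_1}\bigl((2\pi)^{-s}\Gamma(s)\bigr)^{r_2}\zeta_\mathbb{K}(s)$ is of order $1$ after multiplication by $s(s-1)$, so
\[
s(s-1)\Lambda_\mathbb{K}(s)=e^{A+Bs}\prod_\rho\left(1-\frac{s}{\rho}\right)e^{s/\rho}.
\]
Logarithmically differentiating and solving for $\zeta_\mathbb{K}'/\zeta_\mathbb{K}$ gives
\[
\frac{\zeta_\mathbb{K}'}{\zeta_\mathbb{K}}(s)=\sum_\rho\left(\frac{1}{s-\rho}+\frac{1}{\rho}\right)-\frac{1}{s}-\frac{1}{s-1}+F_\mathbb{K}(s),
\]
where $F_\mathbb{K}(s)$ collects the constant $B$, the discriminant term $-\tfrac12\log|d_\mathbb{K}|$, and the gamma factor contributions $-\tfrac{r_1}{2}\Gamma'/\Gamma(s/2)-r_2\Gamma'/\Gamma(s)$ (plus harmless constants). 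By Stirling, $F_\mathbb{K}(\sigma+it)=O(\log(|t|+2))$ uniformly for $\sigma$ in any fixed bounded interval, since $n=r_1+2r_2$ is fixed by $\mathbb{K}$.

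The key step is to evaluate at $s=2+it$. On the one hand, the Dirichlet series for $\zeta_\mathbb{K}'/\zeta_\mathbb{K}$ converges absolutely on $\mathrm{Re}(s)=2$, so $|\zeta_\mathbb{K}'/\zeta_\mathbb{K}(2+it)|\ll 1$. On the other hand, the formula above gives
\[
\sum_\rho\left(\frac{1}{2+it-\rho}+\frac{1}{\rho}\right)=\frac{\zeta_\mathbb{K}'}{\zeta_\mathbb{K}}(2+it)+\frac{1}{2+it}+\frac{1}{1+it}-F_\mathbb{K}(2+it)=O(\log(|t|+2)).
\]
Now take real parts. For any non-trivial zero $\rho=\beta+i\gamma$ with $0\leq\beta\leq 1$,
\[
\mathrm{Re}\frac{1}{2+it-\rho}=\frac{2-\beta}{(2-\beta)^2+(t-\gamma)^2}\gg\frac{1}{1+(t-\gamma)^2},
\]
while $\mathrm{Re}(1/\rho)=\beta/(\beta^2+\gamma^2)\geq 0$. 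Both terms are non-negative, so the left hand side dominates $\sum_\rho(1+(t-\gamma)^2)^{-1}$ up to an absolute constant, giving the claimed $\ll\log T$.

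The implication for $N_1(t)$ is then immediate: each zero with $|t-\gamma|<1$ contributes at least $1/2$ to the sum, hence $N_1(t)\ll\log t$. There is no genuine obstacle here—the only thing that needed checking relative to the classical $\zeta(s)$ case is that the gamma factor contributions remain $O(\log t)$, which is automatic since the degree $n$ of $\mathbb{K}$ is fixed; the discriminant contribution is a $t$-independent constant that is absorbed into the implied constant.
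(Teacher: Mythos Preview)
Your proof is correct and is precisely the standard argument the paper has in mind: the paper omits the proof entirely, saying only that it is ``easily adapted from the results of \cite{mv} by using well known properties of the Dedekind zeta function,'' and what you have written is exactly that adaptation via the Hadamard factorization of the completed Dedekind zeta function.
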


\begin{lem}\label{log der lem}For $-1\leq\sigma\leq 2$ and $t\neq\gamma$ for any zero $\rho$ we have 
\[\frac{\zeta_\mathbb{K}^\prime(s)}{\zeta_\mathbb{K}(s)}=\sum_{\substack{\rho\\|t-\gamma|<1}}\frac{1}{s-\rho}+O(\log t)\]
\end{lem}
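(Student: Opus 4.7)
The plan is to imitate the standard proof for the Riemann zeta function, as in Montgomery--Vaughan \cite{mv}, with the only modifications being the accounting of the extra Gamma factors in the completed Dedekind zeta function and of the discriminant. The completed function
\[
\xi_\mathbb{K}(s)=s(s-1)|d_\mathbb{K}|^{s/2}\gamma_\mathbb{K}(s)\zeta_\mathbb{K}(s),
\]
where $\gamma_\mathbb{K}(s)$ is a finite product of $\Gamma$-factors determined by the signature of $\mathbb{K}$, is entire of order $1$, so by Hadamard's theorem it admits a product representation of the form $e^{A+Bs}\prod_\rho (1-s/\rho)e^{s/\rho}$ over the non-trivial zeros. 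Taking the logarithmic derivative and rearranging yields
\[
\frac{\zeta_\mathbb{K}'(s)}{\zeta_\mathbb{K}(s)}=B-\frac{1}{s}-\frac{1}{s-1}-\tfrac{1}{2}\log|d_\mathbb{K}|-\frac{\gamma_\mathbb{K}'(s)}{\gamma_\mathbb{K}(s)}+\sum_{\rho}\left(\frac{1}{s-\rho}+\frac{1}{\rho}\right).
\]

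Next I would apply this identity at both $s=\sigma+it$, with $-1\le\sigma\le 2$, and at the auxiliary point $s_0=2+it$, and subtract. The constants $A,B,\tfrac{1}{2}\log|d_\mathbb{K}|$ cancel. The term $\zeta_\mathbb{K}'(s_0)/\zeta_\mathbb{K}(s_0)$ is $O(1)$, since $s_0$ lies in the region of absolute convergence of the Dirichlet series for $-\zeta_\mathbb{K}'/\zeta_\mathbb{K}$ (uniformly bounded in terms of $n$). Stirling's approximation, applied once per Gamma factor of $\gamma_\mathbb{K}$, gives $\gamma_\mathbb{K}'(s)/\gamma_\mathbb{K}(s)=O(\log t)$ on $-1\le\sigma\le 2$, $|t|\ge 2$, and the same at $s_0$. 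The remaining simple poles $1/s$ and $1/(s-1)$ are trivially $O(1)$ once $|t|\ge 2$.

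The heart of the argument is the sum over zeros. Splitting it into $|t-\gamma|<1$ and $|t-\gamma|\ge 1$, for the close zeros one keeps $\sum_{|t-\gamma|<1}1/(s-\rho)$ on the main side and bounds the corresponding $1/(s_0-\rho)$ terms by $O(1)$ each; by Lemma \ref{N_1 lem} there are $\ll\log t$ such zeros, contributing $O(\log t)$. For the distant zeros one uses the identity
\[
\frac{1}{s-\rho}-\frac{1}{s_0-\rho}=\frac{s_0-s}{(s-\rho)(s_0-\rho)}\ll\frac{1}{|t-\gamma|^{2}}\ll\frac{1}{1+(t-\gamma)^{2}},
\]
so that Lemma \ref{N_1 lem} again bounds the contribution by $O(\log t)$. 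Combining all estimates yields the stated asymptotic.

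I don't anticipate a real obstacle: the only place where the number field nature enters non-trivially is in establishing the Hadamard product and the Stirling bound for $\gamma_\mathbb{K}'/\gamma_\mathbb{K}$, both of which are standard. The density estimate of Lemma \ref{N_1 lem} is the key external input, and it does all of the essential work, exactly as in the classical case; the degree $n$ of $\mathbb{K}$ is absorbed into the implicit constants.
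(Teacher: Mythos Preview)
Your proposal is correct and is exactly the argument the paper has in mind: the paper omits the proof entirely, stating only that it ``is easily adapted from the results of \cite{mv} by using well known properties of the Dedekind zeta function,'' and your write-up is precisely that adaptation---Hadamard factorisation of the completed $\zeta_\mathbb{K}$, subtraction of the logarithmic derivative at $s_0=2+it$, Stirling for the $\gamma_\mathbb{K}$ factors, and Lemma~\ref{N_1 lem} to handle both the nearby and distant zeros.
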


\begin{lem}\label{explicit lem}Let $u(x)$ be a real, nonnegative smooth function with compact support in $[1,e]$, and let $u$ be normalized so that if 
\begin{equation}v(t)=\int_t^\infty u(x)dx,\end{equation}
then $v(0)=1$.  Let 
\begin{equation}\hat{u}(z)=\int_0^\infty u(x)x^{z-1}dx\end{equation}
be the Mellin transform of $u$. Then for $s$ not a zero or a pole of $\zeta_\mathbb{K}(s)$ we have 

\begin{equation}\begin{split}\label{log der}-\frac{\zeta^\prime_\mathbb{K}(s)}{\zeta_\mathbb{K}(s)}=&\sum_{\mathfrak{a}\subseteq\mathcal{O}_\mathbb{K}}\frac{\Lambda(\mathfrak{a})}{\mathfrak{N}(\mathfrak{a})^s}v(e^{\log\mathfrak{N}(\mathfrak{a})/\log X})-\sum_{\rho}\frac{\hat{u}(1-(s-\rho)\log X)}{s-\rho}\\
&-(r_1+r_2)\sum_{m=1}^\infty\frac{\hat{u}(1-(s+2m)\log X)}{s+2m}\\ 
& -r_2\sum_{j=0}^\infty\frac{\hat{u}(1-(s+2j+1)\log X)}{s+2j+1}
-\chi_\mathbb{K}\frac{\hat{u}(1-(s-1)\log X)}{s-1}
\end{split}\end{equation} 
where $\Lambda(\mathfrak{a})$ is as in (\ref{von mangoldt}) and $r_1$, $r_2$ are, respectively, the number of real and complex embeddings $\mathbb{K}\to\mathbb{C}$.  
\end{lem}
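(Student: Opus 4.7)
The proof proceeds by Mellin inversion followed by a contour shift, a standard template adapted here to a number field. First, I would express the smoothing weight $v$ as an inverse Mellin transform. Integrating $\int_0^\infty v(t)\, t^{z-1}\, dt$ by parts once, using $v(0) = 1$ and $v(t) = 0$ for $t \geq e$, gives $\tilde v(z) = \hat u(z+1)/z$ for $\mathrm{Re}(z) > 0$. Mellin inversion, applied with $y = \mathfrak{N}(\mathfrak{a})^{1/\log X}$ and followed by the substitution $w = z/\log X$, then produces
\begin{equation*}
v\bigl(\mathfrak{N}(\mathfrak{a})^{1/\log X}\bigr) = \frac{1}{2\pi i}\int_{(c')} \hat u(1 + w\log X)\, \mathfrak{N}(\mathfrak{a})^{-w}\, \frac{dw}{w}
\end{equation*}
for any $c' > 0$.

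Next, I would multiply by $\Lambda(\mathfrak{a})/\mathfrak{N}(\mathfrak{a})^s$ and sum over integral ideals. Choosing $c'$ so that $c' + \mathrm{Re}(s) > 1$, the series $\sum_{\mathfrak{a}} \Lambda(\mathfrak{a})/\mathfrak{N}(\mathfrak{a})^{s+w}$ equals $-\zeta'_{\mathbb{K}}/\zeta_{\mathbb{K}}(s+w)$ and converges absolutely, justifying the interchange of sum and integral. This gives
\begin{equation*}
\sum_{\mathfrak{a}} \frac{\Lambda(\mathfrak{a})}{\mathfrak{N}(\mathfrak{a})^s}\, v\bigl(e^{\log \mathfrak{N}(\mathfrak{a})/\log X}\bigr) = \frac{1}{2\pi i}\int_{(c')} \left(-\frac{\zeta'_{\mathbb{K}}}{\zeta_{\mathbb{K}}}(s+w)\right) \hat u(1+w\log X)\, \frac{dw}{w}.
\end{equation*}
I would then shift the contour leftward to $\mathrm{Re}(w) = -R$ with $R \to \infty$, collecting the residues of the integrand along the way. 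These occur at $w = 0$ (from $1/w$, producing $-\zeta'_{\mathbb{K}}/\zeta_{\mathbb{K}}(s)$ since $\hat u(1) = v(0) = 1$); at $w = 1-s$ (from the pole of $\zeta_{\mathbb{K}}$ at $s=1$, yielding the $\chi_{\mathbb{K}}$ term); at $w = \rho - s$ for each non-trivial zero $\rho$; and at $w = -s-2m$ ($m \geq 1$) and $w = -s-(2j+1)$ ($j \geq 0$) from the trivial zeros, whose multiplicities $r_1 + r_2$ and $r_2$ respectively can be read off from the gamma-factor structure $\Gamma_{\mathbb{R}}(s)^{r_1}\Gamma_{\mathbb{C}}(s)^{r_2}$ of the completed zeta function. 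Rearranging the resulting residue identity for $-\zeta'_{\mathbb{K}}/\zeta_{\mathbb{K}}(s)$ gives the stated formula.

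The main obstacle is showing that the integral on $\mathrm{Re}(w) = -R$, together with the horizontal segments connecting it to the line $(c')$, contributes negligibly as $R \to \infty$. The decay $\hat u(1+w\log X) \ll_N (1 + |w\log X|)^{-N}$ for every $N$, obtained by repeated integration by parts using that $u$ is smooth with compact support, controls vertical directions. On horizontal segments one chooses heights $\pm T'$ staying a bounded distance from every ordinate of a zero of $\zeta_{\mathbb{K}}$, which is possible by the zero-density estimate of Lemma \ref{N_1 lem}; at such heights Lemma \ref{log der lem} gives $-\zeta'_{\mathbb{K}}/\zeta_{\mathbb{K}}(s+w) \ll \log |\mathrm{Im}(w)|$. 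This polynomial bound combined with the super-polynomial decay of $\hat u$ makes the horizontal contribution vanish, while the vertical integral at $\mathrm{Re}(w) = -R$ is handled by crude bounds on $\zeta'_{\mathbb{K}}/\zeta_{\mathbb{K}}$ in the region far to the left of the critical strip.
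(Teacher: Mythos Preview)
Your proposal is correct and follows essentially the same approach as the paper: both represent the smoothed von Mangoldt sum as a Mellin integral of $-\zeta'_\mathbb{K}/\zeta_\mathbb{K}(s+w)\,\hat u(1+w\log X)/w$ along a vertical line to the right, then shift the contour leftward and collect the residues at $w=0$, $w=1-s$, $w=\rho-s$, and the trivial-zero locations. The paper carries out the contour shift via a rectangle $M_T(d)$, letting $d\to\infty$ through half-integers first and then $T\to\infty$, with the horizontal pieces split at $\Re(w)=-1-\Re(s)$ and handled using Lemmas \ref{N_1 lem} and \ref{log der lem}; your sketch compresses this into the same ingredients (choice of $T'$ avoiding ordinates, the $\log$ bound from Lemma \ref{log der lem}, and rapid decay of $\hat u$ in the imaginary direction), so the two arguments coincide up to presentation.
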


\begin{proof}Let $c=\max\{2,2-\Re(s)\}$. By absolute convergence we have 

\begin{equation}\begin{split}\frac{1}{2\pi i}\int_{c-i\infty}^{c+i\infty}\frac{\zeta^\prime_\mathbb{K}(s+z)}{\zeta_\mathbb{K}(s+z)}\hat{u}(1+z\log X)\frac{dz}{z}=&-\sum_{\mathfrak{a}\subseteq\mathcal{O}_\mathbb{K}}\frac{\Lambda(\mathfrak{a})}{\mathfrak{N}(\mathfrak{a})^s}\frac{1}{2\pi i}\int_{c-i\infty}^{c+i\infty}\frac{\hat{u}(1+z\log X)}{\mathfrak{N}(\mathfrak{a})^z}\frac{dz}{z}\nonumber
\\=&-\sum_{\mathfrak{a}\subseteq\mathcal{O}_\mathbb{K}}\frac{\Lambda(\mathfrak{a})}{\mathfrak{N}(\mathfrak{a})^s}v(e^{\log\mathfrak{N}(\mathfrak{a})/\log X}).
\end{split}\end{equation}

Let $M_T(d)$ denote the rectangular contour with vertices $(c-iT,c+iT,-d+iT,-d-iT),\,\,d>0$. Then, by the theory of residues and the functional equation of $\zeta_\mathbb{K}(s)$ \cite{neukirch}, we see
\begin{align}\label{log der 1}&\frac{1}{2\pi i}\int_{M_T(d)} \frac{\zeta^\prime_\mathbb{K}(s+z)}{\zeta_\mathbb{K}(s+z)}\hat{u}(1+z\log X)\frac{dz}{z}
\\=&\frac{\zeta^\prime_\mathbb{K}(s)}{\zeta_\mathbb{K}(s)}-\sum_{|\gamma|\leq T}\frac{\hat{u}(1-(s-\rho)\log X)}{s-\rho}
-(r_1+r_2)\sum_{m\leq\lfloor d/2\rfloor}\frac{\hat{u}(1-(s+2m)\log X)}{s+2m}\nonumber\\ 
& -r_2\sum_{j\leq\lfloor(d-1)/2\rfloor}\frac{\hat{u}(1-(s+2j+1)\log X)}{s+2j+1}
+\chi_\mathbb{K}\frac{\hat{u}(1-(s-1)\log X)}{s-1}.\nonumber
\end{align}

Since
\[\int_{c-iT}^{c+iT}=\int_{M_T(d)}-\int_{c+iT}^{-d+iT}-\int_{-d+iT}^{-d-iT}-\int_{-d-iT}^{c-iT}\]
it remains to show that these other integrals vanish in the limit of $T$ and $d$. We first consider the integral over the line $(-d\pm iT)$. Now as long as $\sigma$ is negative and bounded away from a negative integer we have 
\begin{equation}\frac{\Gamma^\prime(s)}{\Gamma(s)}\ll \log(|s|+1).\end{equation}
Hence by logarithmic differentiation of the functional equation of $\zeta_\mathbb{K}(s)$ we have 
\begin{equation}\begin{split}\frac{\zeta^\prime_\mathbb{K}(s)}{\zeta_\mathbb{K}(s)}\ll&\frac{\zeta^\prime_\mathbb{K}(1-s)}{\zeta_\mathbb{K}(1-s)}+\frac{\Gamma^\prime(s)}{\Gamma(s)}\\
\ll& 1+\log (|s|+1),
\end{split}\end{equation}
and as such
\begin{equation}\frac{\zeta^\prime_\mathbb{K}(z+s)}{\zeta_\mathbb{K}(z+s)}\ll \log(|z+s|+1).\end{equation}
Hence, if $d$ is a half integer  
\begin{equation}\int_{-d+iT}^{-d-iT}\frac{\zeta^\prime_\mathbb{K}(s+z)}{\zeta_\mathbb{K}(s+z)}\hat{u}(1+z\log X)\frac{dz}{z}\ll T\frac{\log (|d+s|+1)}{|d|}\frac{\max(u(x))}{(d\log X+1)} 
\end{equation}
and this vanishes as $d\to \infty$ through the half integers.

The behaviours of the other two integrals are equivalent so we only consider the case in the upper half-plane.  We split the line $(-d+iT,c+iT)$ at the point $b+iT$ where $b=-1-\Re(s)$. Then similarly to the above we have

\begin{equation}\begin{split}\int_{-d+iT}^{b+iT}\frac{\zeta^\prime_\mathbb{K}(s+z)}{\zeta_\mathbb{K}(s+z)}\hat{u}(1+z\log X)\frac{dz}{z}&\ll\frac{\log |T+s|}{T}\int_{-d}^b \hat{u}(1+y\log X)dy\\&\ll_{X,s}\frac{\log T}{T}.
\end{split}\end{equation}

For the integral over the line $(b+iT,\,c+iT)$ we restrict $T$ in such a way that $|T-\gamma|^{-1}\ll \log T$. Then by combining  Lemmas \ref{N_1 lem} and \ref{log der lem} we have 
  
\begin{equation}\int_{b+iT}^{c+iT}\frac{\zeta^\prime_\mathbb{K}(s+z)}{\zeta_\mathbb{K}(s+z)}\hat{u}(1+z\log X)\frac{dz}{z}\ll_X \frac{\log^2 T}{T}.
\end{equation}

If we vary $T$ by a bounded amount then the sum over zeros in (\ref{log der 1}) incurs $O(\log T)$ extra terms. These terms are all $O_{X,s}(T^{-1})$ so if we want to relax the restriction on $T$ we must take an error of $O(T^{-1}\log T)$. Since this is less than our main error term we can let $T\to \infty$ after $d$.   
\end{proof}

The support condition on $u$ implies $v(e^{\log\mathfrak{N}(\mathfrak{a})/\log X})=0$ when $\mathfrak{N}(\mathfrak{a})>X$. Since there are at most $n$ prime ideals above the rational prime $p$ we see the sum over $\mathfrak{a}\subseteq\mathcal{O}_\mathbb{K}$ is indeed finite. Also, similarly to \cite{hybrid}, we can show the sums over $\rho$, $m$ and $j$ converge absolutely so long as $s\neq\rho$, $s\neq -2m$ or $s\neq -(2j+1)$. We now turn to the proof of Theorem \ref{hybrid prod thm}.

Let $f_\mathbb{K}(n)$ represent the number of ideals of $\mathcal{O}_\mathbb{K}$ with norm $n$. Then
\begin{equation} \zeta_\mathbb{K}(s)=\sum_{n=1}^\infty \frac{f_\mathbb{K}(n)}{n^s}=1+\sum_{n=2}^\infty \frac{f_\mathbb{K}(n)}{n^s}\end{equation}
and so $\zeta_\mathbb{K}(\sigma+it)\to 1$ as $\sigma\to\infty$ uniformly in $t$. Integrating (\ref{log der}) along the horizontal line from $s_0=\sigma_0+it_0$ to $+\infty$, with $\sigma_0\geq 0$ and $|t_0|\geq 2$, we get on the left hand side $-\log \zeta_\mathbb{K}(s_0)$. 
We can now follow the arguments in \cite{hybrid} to find 
\begin{equation}\zeta_\mathbb{K}(s)=\tilde{P}_\mathbb{K}(s,X)Z_\mathbb{K}(s,X)\left(1+O\left(\frac{X^{l+2}}{(|s|\log X)^l}\right)\right)\end{equation}
where 
\begin{equation}\tilde{P}_\mathbb{K}(s,X)=\exp\left(\sum_{\mathfrak{a}\subseteq\mathcal{O}_\mathbb{K}}\frac{\Lambda(\mathfrak{a})}{\mathfrak{N}(\mathfrak{a})^s\log\mathfrak{N}(\mathfrak{a})}v(e^{\log\mathfrak{N}(\mathfrak{a})/\log X})\right).\end{equation}
We note that this is not too different to $P_\mathbb{K}(s,X)$. Indeed, since $v(e^{\log\mathfrak{N}(\mathfrak{a})/\log X})=1$
for $\mathfrak{N}(\mathfrak{a})\leq\ X^{1-1/X}$ we have 
\begin{align*}\tilde{P}_\mathbb{K}(s,X)=&P_\mathbb{K}(s,X)\exp\left(\sum_{\mathfrak{a}\subseteq\mathcal{O}_\mathbb{K}}\frac{\Lambda(\mathfrak{a})}{\mathfrak{N}(\mathfrak{a})^s\log\mathfrak{N}(\mathfrak{a})}(v(e^{\log\mathfrak{N}(\mathfrak{a})/\log X})-1)\right)
\\=&P_\mathbb{K}(s,X)\exp\left(\sum_{X^{1-1/X}\leq\mathfrak{N}(\mathfrak{a})\leq X}\frac{\Lambda(\mathfrak{a})}{\mathfrak{N}(\mathfrak{a})^s\log\mathfrak{N}(\mathfrak{a})}(v(e^{\log\mathfrak{N}(\mathfrak{a})/\log X})-1)\right)
\\=&P_\mathbb{K}(s,X)\exp\left(O\left(\sum_{X^{1-1/X}\leq p\leq X}p^{-\sigma}\right)\right)
\\=&P_\mathbb{K}(s,X)\exp\left(O\left(X^{-\sigma}\log X\right)\right)
\\=&P_\mathbb{K}(s,X)(1+O(X^{-\sigma}\log X),
\end{align*}
where we have again used the fact that at most $n$ prime ideals lie above the rational prime $p$.

To remove the restriction on $s$, we note that we may interpret $\exp(−U(z))$ to be asymptotic
to $Cz$ for some constant $C$ as $z \to 0$, so both sides of (\ref{hybrid prod}) vanish at the zeros.

\section{Moments of the arithmetic factor}\label{p moment sec}

In this section we prove Theorem \ref{arithmetic moments thm}. For a rational prime $p$ we have the decomposition
\begin{equation}p{\mathcal{O}_\mathbb{K}}=\prod_{i=1}^{g}\mathfrak{p}_i^{e_i}
\end{equation}
with 
\begin{equation}\mathfrak{N}(\mathfrak{p}_i)=p^{f_i}
\end{equation}
where $e_i$ and $f_i$ are positive integers. Since $\mathbb{K}$ is Galois, $e_1=e_2=\cdots=e_{g}=e$ and $f_1=f_2=\cdots=f_{g}=f$, say. We then have the identity $efg=n$. Let $g_p$ denote the number of prime ideals lying above $p$. Then
\begin{align}P_\mathbb{K}(s,X)^k=&\exp\Bigg(k\sum_{\mathfrak{N}(\mathfrak{a})\leq X}\frac{\Lambda(\mathfrak{a})}{\mathfrak{N}(\mathfrak{a})^s\log\mathfrak{N}(\mathfrak{a})}\Bigg)\nonumber
=\exp\Bigg(k\sum_m\sum_{\mathfrak{N}(\mathfrak{p})^m\leq X}\frac{1}{m\mathfrak{N}(\mathfrak{p})^{ms}}\Bigg)
\\=&\exp\Bigg(k\sum_m\sum_{g|n}g\sum_{e|\frac{n}{g}}\sum_{\substack{p^{\frac{mn}{eg}}\leq X\\g_p=g}}\frac{1}{mp^{(n/eg)ms}}\Bigg)
\\=&\prod_{g|n}\prod_{e|\frac{n}{g}}\prod_{\substack{p^{\frac{n}{eg}}\leq X\\g_p=g}}\exp\bigg(\log(1-p^{-(n/eg)s})^{-gk}
-\sum_{\substack{m\\p^{\frac{mn}{eg}}>X}}\frac{1}{mp^{(n/eg)ms}}\bigg)\nonumber. 
\end{align}
We now write the innermost product as the Dirichlet series
\begin{equation}\sum_{l\in\mathcal{L}_{e,g}(X)}^\infty \frac{\beta_{gk}(l)}{l^{(n/eg)s}}
\end{equation} 
where $\mathcal{L}_{e,g}(X)=\{l\in\mathrm{Im}(\mathfrak{N}):p|l\implies g_p=g\,\,\text{and}\,\,p^{n/eg}\leq X\}$. We see that 
$\beta_{gk}(l)$ is a multiplicative function of $l$, $0\leq \beta_{gk}(l)\leq d_{gk}(l)$ for all $l$ and $\beta_{gk}(p^m)=d_{gk}(p^m)$ if $p^m\leq X$. 

For an integer $l$, let $l_{e,g}$ denote the greatest factor of $l$ composed of primes $p$ for which $g_p=g$ and whose ramification index is $e$. Now,
\begin{equation}P_\mathbb{K}(s,X)^k=\prod_{g|n}\prod_{e|\frac{n}{g}}\bigg(\sum_{l\in\mathcal{L}_{e,g}(X)}^\infty \frac{\beta_{gk}(l)}{l^{(n/eg)s}}\bigg)=\sum_{l\in\mathcal{W}(X)}^\infty \frac{\gamma_{k}(l)}{l^s}
\end{equation}
where 
\begin{equation}\gamma_k(l)=\prod_{g|n}\prod_{e|\frac{n}{g}}\beta_{gk}(l_{e,g}^{eg/n})
\end{equation}
and $\mathcal{W}(X)=\{l\in\mathrm{Im}(\mathfrak{N}):\mathfrak{N}(\mathfrak{p})|l\implies \mathfrak{N}(\mathfrak{p})\leq X\}$.
The product representation of $\gamma$ is made possible by the fact that for integers $l,m$ belonging to different $\mathcal{L}_{e,g}(X)$, we have $(l,m)=1$. This would not necessarily be the case for non-Galois extensions. For example, in a cubic extension we may have the factorisation $p\mathcal{O}_\mathbb{K}=\mathfrak{p}_1\mathfrak{p}_2$ and hence one of these ideals has norm $p$, whilst the other has norm $p^2$. We could then follow the previous reasoning whilst redefining the sets $\mathcal{L}$ with a consideration of this difference. However, we would then lose the coprimality condition.  

Since we want to apply the mean value theorem for Dirichlet series we split the sum at $T^\theta$ where $\theta$ is to be chosen later and obtain
\begin{equation}\label{P trunc}P_\mathbb{K}(s,X)^k=\sum_{\substack{l\in\mathcal{W}(X)\\l\leq T^\theta}}\frac{\gamma_k(l)}{l^{s}}+O\bigg(\sum_{\substack{l\in\mathcal{W}(X)\\l> T^\theta}}\frac{\gamma_k(l)}{l^{s}}\bigg).\end{equation}
Now for $\epsilon>0$ and $\sigma\geq c$ the error term is 
\begin{align}\ll&T^{-\epsilon\theta}\sum_{l\in\mathcal{W}(X)}\frac{\prod_{g|n}\prod_{e|\frac{n}{g}}d_{gk}(l_{e,g}^{eg/n})}{n^{c-\epsilon}}
=T^{-\epsilon\theta}\prod_{\mathfrak{N}(\mathfrak{p})\leq X}(1-\mathfrak{N}(\mathfrak{p})^{\epsilon-c})^{-k}\nonumber
\\=&T^{-\epsilon\theta}\exp\bigg(O\bigg(k\sum_{\mathfrak{N}(\mathfrak{p})\leq X}\mathfrak{N}(\mathfrak{p})^{\epsilon-c}\bigg)\bigg)
=T^{-\epsilon\theta}\exp\left(O\left(\frac{kX^{1-c+\epsilon}}{(1-c+\epsilon)\log X}\right)\right)\nonumber
\end{align} 
where in the last line we have used the prime ideal theorem. If we let $ X\asymp (\log T)^{1/(1-c+\epsilon)}$ then this is 
\begin{equation}\ll T^{-\epsilon\theta}\exp\left(O\left(k\frac{\log T}{\log\log T}\right)\right)\ll_k T^{-\epsilon\theta/2}\end{equation}
and hence
\begin{equation}\label{P trunc 1}P_\mathbb{K}(s,X)^k
=\sum_{\substack{l\in\mathcal{W}(X)\\l\leq T^\theta}}\frac{\gamma_k(l)}{l^s}+O_k(T^{-\epsilon\theta/2}).
\end{equation}
We now let $\theta=1/2$ and apply the Montgomery-Vaughan mean value theorem \cite{mv 0} to give
\begin{align}\frac{1}{T}\int_T^{2T}\bigg|\sum_{\substack{l\in\mathcal{W}(X)\\l\leq T^{1/2}}}\frac{\gamma_k(l)}{l^{\sigma+it}}\bigg|^2dt=&(1+O(T^{-1/2}))\sum_{\substack{l\in\mathcal{W}(X)\\l\leq T^{1/2}}}\frac{\gamma_k(l)^2}{l^{2\sigma}}\nonumber
\\=&(1+O(T^{-1/2}))\bigg(\sum_{l\in\mathcal{W}(X)}\frac{\gamma_k(l)^2}{l^{2\sigma}}+O(T^{-\epsilon/4})\bigg)
\\=&(1+O(T^{-\epsilon/4}))\sum_{l\in\mathcal{W}(X)}\frac{\gamma_k(l)^2}{l^{2\sigma}}.\nonumber
\end{align}
Therefore by (\ref{P trunc 1}) and the Cauchy-Schwarz inequality we have 
\begin{equation}\frac{1}{T}\int_T^{2T}\left|P_\mathbb{K}\left(\sigma+it,X\right)\right|^{2k}=(1+O(T^{-\epsilon/4}))\sum_{l\in\mathcal{W}(X)}\frac{\gamma_k(l)^2}{l^{2\sigma}}.\end{equation}
We can now re-factorise the above Dirichlet series to give
\begin{equation}\label{gamma prod rep}\sum_{l\in\mathcal{W}(X)}\frac{\gamma_k(l)^2}{l^{2\sigma}}=\prod_{g|n}\prod_{e|\frac{n}{g}}\bigg(\sum_{l\in\mathcal{L}_{e,g}(X)}^\infty \frac{\beta_{gk}(l)^2}{l^{2(n/eg)\sigma}}\bigg).
\end{equation}
For an individual series in the above product we can follow the arguments in \cite{hybrid} to find 
\begin{equation}\sum_{l\in\mathcal{L}_{e,g}(X)}^\infty \frac{\beta_{gk}(l)^2}{l^{2(n/eg)\sigma}}=(1+O(X^{-1/2+\epsilon}))\prod_{\substack{p^{\frac{n}{eg}}\leq X\\g_p=g}}\sum_{m\geq 0} \frac{d_{gk}(p^m)^2}{p^{2m(n/eg)\sigma}}.
\end{equation}
Now, the above product may be divergent as $X\to\infty$. In order to keep the arithmetic information, we factor out the divergent part and write it as 
\begin{equation}\label{div conv prod}\prod_{\substack{p^{\frac{n}{eg}}\leq X\\g_p=g}}\left(\left( 1-p^{-2(n/eg)\sigma}\right)^{ngk^2}\sum_{m\geq 0}\frac{d_{gk}(p^m)^2}{p^{2m(n/eg)\sigma}}\right)\prod_{\substack{p^{\frac{n}{eg}}\leq X\\g_p=g}}\left( 1-p^{-2(n/eg)\sigma}\right)^{-ngk^2}.\end{equation}
In terms of divergence, the worst case scenario is when $n/eg=1$. If in this case $g_p=g<n$, then $p$ is ramified and hence the product is finite. Therefore, we only need consider the case $g=n$, for which the above equals
\begin{align}\prod_{\substack{p>X\\g_p=n}}\left(\left(1-p^{-2\sigma}\right)^{n^2k^2}\sum_{m\geq 0}\frac{d_{nk}(p^m)^2}{p^{2m\sigma}}\right)
=&\prod_{\substack{p>X\\g_p=n}}\left(1-n^2k^2p^{-2\sigma}+n^2k^2p^{-2\sigma}+O_k(p^{-4\sigma})\right)\nonumber
\\=&\prod_{\substack{p>X\\g_p=n}}\left(1+O_k(p^{-4\sigma})\right)\nonumber
\\=&1+O_k(1/(X\log X)).
\end{align}
It follows that we can extend the first product in (\ref{div conv prod}) over all primes. Specialising to $\sigma=1/2$ and using the  product representation in (\ref{gamma prod rep}) we see 
\begin{equation}\sum_{l\in\mathcal{W}(X)}\frac{\gamma_k(l)^2}{l}=a(k)\prod_{\mathfrak{N}(\mathfrak{p})\leq X}(1-\mathfrak{N}(\mathfrak{p})^{-1})^{-nk^2}(1+O_k(X^{-1/2+\epsilon})).\end{equation}
By a generalisation of Mertens theorem \cite{mertens}, we have
\begin{equation}\prod_{\mathfrak{N}(\mathfrak{p})\leq X}(1-\mathfrak{N}(\mathfrak{p})^{-1})^{-nk^2}=\chi_\mathbb{K}^{nk^2}(e^{\gamma}\log X)^{nk^2}(1+O(1/\log^2 X))\end{equation}
and the result follows.

\section{Support for conjecture \ref{z moment conj}} 
Let $\mathbb{K}$ be a Galois extension of degree $n$ with Galois group $G$. Then it is well known (see for example \cite{neukirch}, chap.\ 7) that 
\begin{equation}\label{gal ded}\zeta_\mathbb{K}(s)=\prod_\chi L(s,\chi,\mathbb{K}/\mathbb{Q})^{\chi(1)}
\end{equation} 
where the product is over the non-equivalent irreducible characters of $G$ and $L(s,\chi,\mathbb{K}/\mathbb{Q})$ is the Artin $L$-function attached to $\chi$. For each character $\chi$, the associated $L$-function  satisfies the functional equation
\begin{equation}\Lambda(s,\chi):=q(\chi)^{s/2}\gamma(s,\chi)L(s,\chi)=W(\chi)\Lambda(1-s,\overline{\chi})
\end{equation}
where $W(\chi)$ is some complex number of modulus one  and $q(\chi)$ is the conductor, for which we do not require an explicit expression. The gamma factor is given by 
\begin{equation}\gamma(s,\chi)=\pi^{-sd_\chi/2}\prod_{j=1}^{d_\chi}\Gamma\left(\frac{s+\mu_j}{2}\right)
\end{equation}
with $\mu_j$ equal to 0 or 1. If we assume the Artin conjecture then $L(s,\chi)$ is an entire function for all non-trivial $\chi$. If $\chi$ is the trivial character then $L(s,\chi)$ equals the Dedekind zeta function of the base field, which in our case is $\zeta(s)$. Under this assumption, these $L$-functions exhibit reasonable behaviour and the usual arguments  (e.g. Theorem 5.8 of \cite{iwaniec kowalski}) give the mean density of zeros of $L(\beta+it,\chi)$, $0\leq\beta\leq 1$, as 
\begin{equation}\frac{1}{\pi}\log\bigg(q(\chi)\left(\frac{t}{2\pi}\right)^{d_\chi}\bigg)=\frac{1}{\pi}\mathcal{L}_\chi(t),
\end{equation}
say. For each $L(s,\chi)$ in the product of equation (\ref{gal ded}), we associate to its zeros $\gamma_n(\chi)$ at height $T$, a unitary matrix $U(N(\chi))$ of size $N(\chi)=\lfloor\mathcal{L}_\chi(T)\rfloor$ chosen with respect to Haar measure, which we denote $d\mu(\chi)$. 
After rescaling, the zeros $\gamma_n(\chi)$ are conjectured \cite{rud sarn} to share the same distribution as the eigenangles $\theta_n(\chi)$ of $U(N(\chi))$ when chosen with $d\mu(\chi)$. 

In addition to the previous assumptions, we now also assume the extended Riemann hypothesis. Let $Z_\mathbb{K}(s,X)$ be given by (\ref{Z prod}). Since $\Re E_1(ix)=-\mathrm{Ci}(|x|)$ for $x\in\mathbb{R}$, where
\begin{equation}\mathrm{Ci}(z)=-\int_z^\infty \frac{\cos w}{w}dw,
\end{equation}
we see that
\begin{equation}\begin{split}\frac{1}{T}\int_T^{2T}&\left|Z_\mathbb{K}\left(\frac{1}{2}+it,X\right)\right|^{2k}dt
\\=&\frac{1}{T}\int_T^{2T}\prod_{\gamma_n}\exp\bigg(2k\int_1^e u(y)\mathrm{Ci}(|t-\gamma_n|\log y\log X)\bigg)dydt
\\=&\frac{1}{T}\int_T^{2T}\prod_\chi\prod_{\gamma_n(\chi)}\exp\bigg(2k\chi(1)\int_1^e u(y)\mathrm{Ci}(|t-\gamma_n(\chi)|\log y\log X)\bigg)dydt
\end{split}
\end{equation}
where $u(y)$ is a smooth, non-negative function supported on $[ e^{1-1/X},e]$ and of total mass one. We now replace the zeros with the eigenangles and argue that the above should be modeled by 
\begin{equation}\mathbb{E}\bigg[\prod_\chi\prod_{n=1}^{N(\chi)}\phi({k\chi(1)},\theta_n(\chi))\bigg]
\end{equation}
where
\begin{equation}\phi(m,\theta)=\exp\bigg(2m\int_1^e u(y)\mathrm{Ci}(|\theta|\log y\log X)\bigg)
\end{equation}
and the expectation is taken with respect to the product measure $\prod_\chi d\mu(\chi)$. We now assume that the matrices $U(N(\chi))$ can be chosen independently for any two distinct $\chi$. This corresponds to a `superposition' of ensembles; the behaviour of which is also shared by the distribution of zeros of a product of distinct $L$-functions \cite{liu ye}. With this assumption, the expectation factorises as 
\begin{equation}\prod_\chi \mathbb{E}\bigg[\prod_{n=1}^{N(\chi)}\phi(k\chi(1),\theta_n(\chi))\bigg].
\end{equation}    
In \cite{hybrid} it is shown (Theorem 4) that for $k>-1/2$ and $X\geq 2$,
\begin{equation}\mathbb{E}\bigg[\prod_{j=1}^{M}\phi(m,\theta_j)\bigg]\sim\frac{G(m+1)^2}{G(2m+1)}\bigg(\frac{M}{e^\gamma\log X}\bigg)^{m^2}\left(1+O_m\left(\frac{1}{\log X}\right)\right).
\end{equation}
Therefore, by forming the product over $\chi$ and using $\sum_{\chi}\chi(1)^2=|\mathrm{Gal}(\mathbb{K}/\mathbb{Q})|$$=n$ we are led to conjecture \ref{z moment conj}.

\section{The second moment of $Z_\mathbb{K}$ for quadratic extensions}\label{z moment sec}

In this section we prove Theorem \ref{z moment thm}. For the most part, the remainder of this paper is concerned with quadratic extensions so we first state some useful facts whilst establishing our notation. 

As mentioned in the introduction, $\zeta_\mathbb{K}=\zeta(s)L(s,\chi)$ where $\chi$ is the Kronecker character. We shall have occasion to work with more general (complex) characters $\chi$ mod $q>1$ when the arguments in question work in such generalities, however, at some points we may specialise to the Kronecker character without mention. 
We also note in quadratic extensions the splitting of primes admits the following simple description:
\begin{eqnarray*}p\,\,\mathrm{is \,\,split}&:&\,\,\,(p)=\mathfrak{p}_1\mathfrak{p}_2\,\,\,\implies\mathfrak{N}(\mathfrak{p}_1)=\mathfrak{N}(\mathfrak{p}_2)=p
\\p\,\,\mathrm{is \,\,inert}&:&\,\,\,(p)=\mathfrak{p}_1\,\,\,\,\,\,\,\,\implies\mathfrak{N}(\mathfrak{p}_1)=p^2
\\p\,\,\mathrm{is \,\,ramified}&:&\,\,\,(p)=\mathfrak{p}_1^2\,\,\,\,\,\,\,\,\implies\mathfrak{N}(\mathfrak{p}_1)=p.
\end{eqnarray*}
At some points we shall use the notation $p_\mathrm{s}, p_\mathrm{i}, p_\mathrm{r}$ to denote split, inert and ramified primes respectively.

\subsection{The setup}
Our aim is to show 
\begin{equation}\frac{1}{T}\int_T^{2T}\bigg|Z_\mathbb{K}\left(\frac{1}{2}+it,X\right)\bigg|^2dt\sim\frac{\log T\cdot\log qT}{\left(e^\gamma \log X\right)^2}
\end{equation}
for $X,T\to\infty$ with $X\ll (\log T)^{2-\epsilon}$ and $\mathbb{K}$ quadratic. Since $\zeta_\mathbb{K}(1/2+it)P_\mathbb{K}(1/2+it,X)=Z_\mathbb{K}(1/2+it,X)(1+o(1))$ for $t\in[T,2T]$, it is enough to show that 
\begin{equation}\frac{1}{T}\int_T^{2T}\bigg|\zeta_\mathbb{K}\left(\frac{1}{2}+it\right)P_\mathbb{K}\left(\frac{1}{2}+it,X\right)^{-1}\bigg|^2dt\sim\frac{\log T\cdot\log qT}{\left(e^\gamma \log X\right)^2}.
\end{equation}
To evaluate the left hand side we first express $P_\mathbb{K}(1/2+it)^{-1}$ as a Dirichlet polynomial and then apply a formula given given by the author in \cite{me}. The means to do this are given by the following sequence of Lemmas.

\begin{lem}\label{Q lem}Let 
\begin{equation}\label{Q split prod}Q_\mathrm{s}(s,X)=\prod_{\substack{p\leq \sqrt{X}\\p\,\,\mathrm{ split}}}\left(1-p^{-s}\right)\prod_{\substack{\sqrt{X}<p\leq X\\p\,\,\mathrm{split}}}\left(1-p^{-s}+\frac{1}{2}p^{-2s}\right)
\end{equation}
and define $Q_{\mathrm{i}}(s,X)$ and $Q_{\mathrm{r}}(s,X)$ as the same products except over the inert and ramified primes respectively. Then for $X$ sufficiently large, we have
\begin{equation}\label{P as Q prod}P_\mathbb{K}(s,X)^{-1}=Q_\mathrm{s}(s,X)^2Q_\mathrm{i}(2s,\sqrt{X})Q_\mathrm{r}(s,X)\left(1+O\left(\frac{1}{\log X}\right)\right)
\end{equation}
and this holds uniformly for $\sigma\geq 1/2$.
\end{lem}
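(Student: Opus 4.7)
The plan is to take logarithms and verify that
\begin{equation*}
D(s,X) := \log P_\mathbb{K}(s,X)^{-1} - \log\!\left(Q_\mathrm{s}(s,X)^2 Q_\mathrm{i}(2s,\sqrt X) Q_\mathrm{r}(s,X)\right) = O(1/\log X)
\end{equation*}
uniformly for $\sigma \geq 1/2$; exponentiating then gives (\ref{P as Q prod}). Using $\Lambda(\mathfrak{p}^m) = \log \mathfrak N(\mathfrak p)$ together with the quadratic splitting rules ($\mathfrak N(\mathfrak p) = p$ for split or ramified $p$, $\mathfrak N(\mathfrak p) = p^2$ for inert $p$), one decomposes $-\log P_\mathbb K(s,X)$ as
\begin{equation*}
-2\sum_{p\text{ split}}\sum_{m:\,p^m\leq X}\frac{p^{-ms}}{m} \;-\; \sum_{p\text{ inert}}\sum_{m:\,p^{2m}\leq X}\frac{p^{-2ms}}{m} \;-\; \sum_{p\text{ ram}}\sum_{m:\,p^m\leq X}\frac{p^{-ms}}{m},
\end{equation*}
and expands each $Q$-factor using $-\log(1-w) = \sum_{m\geq 1}w^m/m$ in the small-prime ranges and $\log(1-w+\tfrac{1}{2}w^2) = -w + \tfrac{1}{6}w^3 + O(w^4)$ in the large-prime ranges, so that contributions can be compared prime by prime.

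For split primes $p \leq \sqrt X$ the $Q_\mathrm{s}^2$ piece matches $-\log P_\mathbb K$ except for the tail $2\sum_{m\geq M_p}p^{-ms}/m$ where $p^{M_p}>X$. For $\sigma \geq 1/2$ this is $\ll 1/(M_p\sqrt X)$, and using $M_p^{-1}\leq \log p/\log X$ together with $\theta(\sqrt X) \ll \sqrt X$ gives a total contribution of $O(1/\log X)$. For split primes $\sqrt X < p \leq X$ only $m=1$ survives in $-\log P_\mathbb K$, yielding $-2p^{-s}$; the coefficient $\tfrac{1}{2}$ in the factor $(1-p^{-s}+\tfrac{1}{2}p^{-2s})$ is chosen precisely so that $-2\log(1-p^{-s}+\tfrac{1}{2}p^{-2s}) = 2p^{-s}+O(p^{-3s})$ (the $p^{-2s}$ terms cancel), and $\sum_{\sqrt X<p\leq X}p^{-3\sigma} \ll X^{-1/4}$ by partial summation. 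The inert case runs identically under the substitution $u = p^{-2s}$ with intermediate threshold $X^{1/4}$, which is precisely why $Q_\mathrm{i}$ is evaluated at $(2s,\sqrt X)$. The ramified primes divide $d_\mathbb K$ and so are finite in number; for $X$ sufficiently large they all lie below $\sqrt X$, the large-prime factor of $Q_\mathrm{r}$ is trivial, and the small-prime tail argument applies.

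Summing the contributions yields $D(s,X) = O(1/\log X)$ uniformly for $\sigma\geq 1/2$, and exponentiating gives (\ref{P as Q prod}). The main obstacle is the careful bookkeeping across the three prime classes and the two prime-size ranges in each, together with recognising that the second-order term $+\tfrac{1}{2}w^2$ in the large-prime factors of the $Q$'s is engineered precisely to cancel the $p^{-2s}$ (resp.\ $p^{-4s}$) contribution that would otherwise dominate the discrepancy between $\exp(-2p^{-s})$ and the polynomial truncation on primes in $(\sqrt X,X]$. No input deeper than Mertens-type estimates for prime ideals and elementary Taylor analysis is required.
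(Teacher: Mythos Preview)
Your argument is correct and follows essentially the same route as the paper: decompose $P_\mathbb{K}(s,X)^{-1}$ by prime type, split each class at the threshold where only $m=1$ survives, bound the small-prime tails via $p^{-(N_p+1)\sigma}<X^{-1/2}$ together with Chebyshev, and for the large primes use that $\exp(-w)$ and $1-w+\tfrac12 w^2$ agree to order $w^2$. The only cosmetic difference is that you work additively with $\log$ throughout and then exponentiate, whereas the paper keeps the products and writes $\exp(-p^{-s}) = (1-p^{-s}+\tfrac12 p^{-2s})(1+O(p^{-3\sigma}))$ directly; the estimates and thresholds are identical.
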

\begin{proof}First, note
\begin{align}
P_\mathbb{K}(s,X)^{-1}=&\exp\left(-\sum_{\substack{\mathfrak{N}(\mathfrak{p})^m\leq X}}\frac{1}{m\mathfrak{N}(\mathfrak{p})^{ms}}\right)\nonumber
\\=&\prod_{\substack{p\leq X\\p\,\,\mathrm{split}}}\exp\Bigg(-2\sum_{1\leq m\leq \left\lfloor\frac{\log X}{\log p}\right\rfloor}\frac{1}{mp^{ms}}\Bigg)
\prod_{\substack{p^2\leq X\\p\,\,\mathrm{inert}}}\exp\Bigg(-\sum_{1\leq m\leq \left\lfloor\frac{\log X}{2\log p}\right\rfloor}\frac{1}{mp^{2ms}}\Bigg)
\\&\,\,\,\,\,\,\,\,\,\,\,\,\,\,\,\,\,\,\,\,\,\,\,\,\times \prod_{\substack{p\leq X\\p\,\,\mathrm{ramified}}}\exp\Bigg(-\sum_{1\leq m\leq \left\lfloor\frac{\log X}{\log p}\right\rfloor}\frac{1}{mp^{ms}}\Bigg)\nonumber
\end{align}
and so it suffices to consider just one of these products. Let $A$ be a subset of the primes and let $N_p=\lfloor \log X/\log p\rfloor$. Since $N_p=1$ if $\sqrt{X}<p\leq X$ we have 
\begin{align*}\prod_{\substack{p\leq X\\p\in A}}\exp\Bigg(-\sum_{1\leq m\leq N_p}\frac{1}{mp^{ms}}\Bigg)=&\prod_{\substack{p\leq \sqrt{X}\\p\in A}}\exp\Bigg(\log(1-p^{-s})+\sum_{m> N_p}\frac{1}{mp^{ms}}\Bigg)
\\&\times\prod_{\substack{\sqrt{X}<p\leq X\\p\in A}}\exp(-p^{-s}).
\end{align*}
Now, on noting that $N_p+1>\log X/\log p$ we have for $\sigma\geq 1/2$; 
\begin{align}\exp\Bigg(\sum_{p\leq \sqrt{X}}\sum_{m>N_p}\frac{1}{mp^{ms}}\Bigg)\ll\exp\Bigg(\sum_{p\leq \sqrt{X}}\frac{1}{p^{\sigma(N_p+1)}}\Bigg)\ll\exp\Bigg(X^{-1/2}\sum_{p\leq \sqrt{X}}1\Bigg)
\end{align}
and this is $\ll 1+O(1/\log X)$ by the prime number theorem. Also, 
\begin{align}&\prod_{\substack{\sqrt{X}<p\leq X}}\left(1-p^{-s}+\frac{1}{2!}p^{-2s}-\frac{1}{3!}p^{-3s}+O(p^{-4\sigma})\right)\nonumber
\\=&\prod_{\substack{\sqrt{X}<p\leq X}}\left(1-p^{-s}+\frac{1}{2}p^{-2s}\right)\left(1+O(p^{-3\sigma})\right)
\\=&\prod_{\substack{\sqrt{X}<p\leq X}}\left(1-p^{-s}+\frac{1}{2}p^{-2s}\right)\left(1+O\left(\frac{1}{\log X}\right)\right)\nonumber
\end{align}
and so we're done. 
\end{proof}

\begin{lem}\label{alpha lem}We have 
\begin{equation}P_\mathbb{K}\left(\frac{1}{2}+it,X\right)^{-1}=\left(1+O\left(\frac{1}{\log X}\right)\right)\sum_{n\in\mathcal{W}(X)}\frac{\alpha(n)}{n^{1/2+it}}
\end{equation}
where $\mathcal{W}(X)=\{n\in\mathrm{Im}(\mathfrak{N}):\mathfrak{N}(\mathfrak{p})|n\implies \mathfrak{N}(\mathfrak{p})\leq X\}$ and the behaviour of $\alpha$ at primes is determined by
\begin{equation}\alpha(p_\mathrm{s}^j)=
\begin{cases}-2&\text{if $j=1$,\,\,$p_\mathrm{s}\leq X$},
\\1&\text{if $j=2$,\,\,$p_\mathrm{s}\leq \sqrt{X}$,}
\\2&\text{if $j=2$,\,\,$\sqrt{X}<p_\mathrm{s}\leq X$,}
\\0&\text{if $j\geq 3$,}
\end{cases}
\,\,\,\alpha(p_\mathrm{i}^{2j})=
\begin{cases}-1&\text{if $j=1$,\,\,$p_\mathrm{i}^2\leq X$},
\\0&\text{if $j=2$,\,\,$p^2_\mathrm{i}\leq \sqrt{X}$,}
\\\frac{1}{2}&\text{if $j=2$,\,\,$\sqrt{X}<p^2_\mathrm{i}\leq X$,}
\\0&\text{if $j\geq 3$}
\end{cases}
\end{equation}
and
\begin{equation}\alpha(p_\mathrm{r}^{j})=
\begin{cases}-1&\text{if $j=1$,\,\,$p_\mathrm{r}\leq X$},
\\0&\text{if $j=2$,\,\,$p_\mathrm{r}\leq \sqrt{X}$,}
\\\frac{1}{2}&\text{if $j=2$,\,\,$\sqrt{X}<p_\mathrm{r}\leq X$,}
\\0&\text{if $j\geq 3$.}
\end{cases}
\end{equation}
We also have the bound $\alpha(n)\ll d(n)$ for all $n\in\mathcal{W}(X)$.
\end{lem}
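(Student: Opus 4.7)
The plan is to start from Lemma~\ref{Q lem} and expand the product $Q_\mathrm{s}(s,X)^2 Q_\mathrm{i}(2s,\sqrt{X}) Q_\mathrm{r}(s,X)$ into a finite Dirichlet polynomial, then identify its coefficients. Since the three $Q$-factors involve disjoint sets of rational primes, this product factors as $\prod_p L_p(s)$, where the local polynomial $L_p$ depends only on how $p$ splits in $\mathbb{K}$ and on which range $p$ (or $p^2$ in the inert case) lies in.

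For all primes except split primes in the range $\sqrt{X}<p\le X$, the local factor is already a polynomial in $p^{-s}$ whose coefficients are precisely the $\alpha(p^j)$ claimed in the lemma. The one case requiring truncation is
\begin{equation*}
L_p(s)=\bigl(1-p^{-s}+\tfrac{1}{2}p^{-2s}\bigr)^{2} = 1-2p^{-s}+2p^{-2s}-p^{-3s}+\tfrac{1}{4}p^{-4s},
\end{equation*}
which I would replace by $\tilde{L}_p(s):=1-2p^{-s}+2p^{-2s}$. On the line $\sigma=1/2$, the condition $p>\sqrt{X}$ (and $X$ large) forces $|\tilde{L}_p|\ge 1/2$ and $|L_p-\tilde{L}_p|\ll p^{-3/2}$, hence $L_p/\tilde{L}_p=1+O(p^{-3/2})$. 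Multiplying over split primes in $(\sqrt{X},X]$ then gives a factor of $\exp(O(X^{-1/4}/\log X))=1+O(1/\log X)$ by the prime number theorem, which is absorbed into the error term already provided by Lemma~\ref{Q lem}.

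After this truncation, $\prod_p \tilde{L}_p(s)$ is a finite product of polynomials in distinct variables $p^{-s}$, hence expands as a Dirichlet polynomial $\sum_n \alpha(n) n^{-s}$ with $\alpha$ multiplicative and $\alpha(p^j)$ equal to the coefficient of $p^{-js}$ in the local factor at $p$. A term-by-term check against the four cases of $L_p$ reproduces the table in the statement; in particular $\alpha(p_\mathrm{s}^j)$ and $\alpha(p_\mathrm{r}^j)$ vanish for $j\ge 3$, and $\alpha(p_\mathrm{i}^m)$ vanishes unless $m\in\{2,4\}$. Each nonzero $\alpha(p^j)$ corresponds to a prime ideal above $p$ of norm at most $X$ (the ranges in $Q_\mathrm{s}, Q_\mathrm{i}, Q_\mathrm{r}$ were arranged precisely so), so the support of the Dirichlet polynomial is contained in $\mathcal{W}(X)$. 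Finally, a case-by-case inspection gives $|\alpha(p^j)|\le 2$ at every prime power, so by multiplicativity $|\alpha(n)|\le 2^{\omega(n)}\le d(n)$.

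The only genuinely analytic step is the truncation estimate in the second paragraph; I expect no real obstacle there, because $p>\sqrt{X}$ makes the omitted tail of $L_p$ a small perturbation of $\tilde{L}_p$ on the critical line, summable against the prime number theorem. Everything else amounts to careful bookkeeping against Lemma~\ref{Q lem} and unpacking the definition of $\mathcal{W}(X)$.
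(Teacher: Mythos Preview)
Your proposal is correct and follows essentially the same route as the paper: start from Lemma~\ref{Q lem}, truncate the square of the split-prime factor in the range $\sqrt{X}<p\le X$ by dropping the $p^{-3s}$ and $p^{-4s}$ terms at a cost of $1+O(1/\log X)$, and then read off the multiplicative coefficients $\alpha(p^j)$ from the resulting local polynomials. Your justification of the truncation (bounding $|\tilde L_p|$ from below and $|L_p-\tilde L_p|\ll p^{-3/2}$) is a slightly more explicit version of the paper's $O(p^{-3\sigma})$ manipulation, and your derivation of $|\alpha(n)|\le 2^{\omega(n)}\le d(n)$ fills in the bound that the paper simply asserts.
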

\begin{proof}
We first note that the square of the product over split primes in (\ref{P as Q prod}) is given by 
\begin{align}\label{Q^2}Q_\mathrm{s}(s,X)^2=&\prod_{\substack{p\leq \sqrt{X}\\p\,\,\mathrm{ split}}}\left(1-2p^{-s}+p^{-2s}\right)\prod_{\substack{\sqrt{X}<p\leq X\\p\,\,\mathrm{split}}}\left(1-2p^{-s}+2p^{-2s}+O(p^{-3\sigma})\right)\nonumber
\\=&\prod_{\substack{p\leq \sqrt{X}\\p\,\,\mathrm{ split}}}\left(1-2p^{-s}+p^{-2s}\right)\prod_{\substack{\sqrt{X}<p\leq X\\p\,\,\mathrm{split}}}\left(1-2p^{-s}+2p^{-2s}\right)
\\&\times\left(1+O\left(\frac{1}{\log X}\right)\right).\nonumber
\\=&R_\mathrm{s}(s,X)\left(1+O\left(\frac{1}{\log X}\right)\right)\nonumber,
\end{align}
say. On writing
\begin{equation}R_\mathrm{s}(s,X)Q_\mathrm{i}(2s,\sqrt{X})Q_\mathrm{r}(s,X)=\sum_{n\in\mathcal{W}(X)}\frac{\alpha(n)}{n^{1/2+it}}
\end{equation}
we can read off the behaviour of $\alpha$ at the primes from the Euler products.
\end{proof}

\begin{lem}Let $\theta>0$. Then 
\begin{equation}\begin{split}
&\frac{1}{T}\int_T^{2T}\left|\zeta_\mathbb{K}\left(\frac{1}{2}+it\right)P_\mathbb{K}\left(\frac{1}{2}+it,X\right)^{-1}\right|^2dt
\\&=\left(1+O\left(\frac{1}{\log X}\right)\right)\frac{1}{T}\int_T^{2T}\bigg|\zeta_\mathbb{K}\left(\frac{1}{2}+it\right)\sum_{\substack{n\in\mathcal{W}(X)\\n\leq T^{\theta}}}\frac{\alpha(n)}{n^{1/2+it}}\bigg|^2dt.
\end{split}\end{equation}
\end{lem}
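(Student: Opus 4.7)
The claim is a truncation lemma, to be derived from Lemma \ref{alpha lem} together with a tail bound for the Dirichlet polynomial $D(s):=\sum_{n\in\mathcal{W}(X)}\alpha(n)n^{-s}$. By Lemma \ref{alpha lem},
\begin{equation*}
P_\mathbb{K}(1/2+it,X)^{-1}=(1+O(1/\log X))D(1/2+it)\quad\text{pointwise in }t,
\end{equation*}
so taking squared moduli and integrating extracts an overall factor of $(1+O(1/\log X))^2=1+O(1/\log X)$. What remains is to show that replacing $D$ by its truncation $A(s):=\sum_{n\in\mathcal{W}(X),\,n\leq T^\theta}\alpha(n)n^{-s}$ incurs at most the same relative error in the weighted mean square.

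Write $D=A+B$ with $B$ the tail over $n>T^\theta$. Since $R_\mathrm{s}Q_\mathrm{i}Q_\mathrm{r}$ from Lemmas \ref{Q lem}--\ref{alpha lem} is a finite Euler product, $D$ is itself a Dirichlet polynomial of length $N\leq e^{O(X)}$. Expanding $|D|^2=|A|^2+2\Re(A\overline{B})+|B|^2$ and handling the cross term via Cauchy--Schwarz, the result reduces to the tail estimate
\begin{equation*}
\mathcal{E}:=\int_T^{2T}|\zeta_\mathbb{K}(1/2+it)|^2|B(1/2+it)|^2\,dt\ll \frac{1}{(\log X)^2}\int_T^{2T}|\zeta_\mathbb{K}(1/2+it)|^2|A(1/2+it)|^2\,dt.
\end{equation*}
To attack $\mathcal{E}$, I would insert the approximate functional equation for $\zeta_\mathbb{K}$ to rewrite $\zeta_\mathbb{K}(1/2+it)$ as a Dirichlet polynomial of length $\ll T^{1+\epsilon}$ plus a gamma-twisted dual, so that $\zeta_\mathbb{K}\cdot B$ becomes a Dirichlet polynomial with coefficients $(f_\mathbb{K}\ast\alpha)(n)$. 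The Montgomery--Vaughan mean-value theorem then reduces $\mathcal{E}$ to an arithmetic sum restricted to convolutions involving some $\alpha(m)$ with $m>T^\theta$, and Rankin's trick --- inserting $(m/T^\theta)^{2\delta}$ for small $\delta>0$, using $|\alpha(m)|\ll d(m)$, and controlling the resulting Euler product over $\mathfrak{N}(\mathfrak{p})\leq X$ via the prime ideal theorem --- supplies a power saving of $T^{-\epsilon\theta}$ that amply dominates the $(\log X)^{-2}$ saving required.

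The main obstacle is that when $X$ is near the top of its permissible range $\log^{2-\epsilon}T$, the length $N$ of $D$ substantially exceeds $T$, so the secondary $O(N)$ term in the Montgomery--Vaughan bound is not automatically absorbed. I would resolve this by dyadically dissecting $B$ into blocks supported on $M<n\leq 2M$ for $T^\theta\leq M\leq N$, applying the mean-value estimate to each block separately, and using Rankin's trick with an exponent tuned to the block size so that the polynomial saving dominates the secondary term on every piece; summation over the $O(\log N)=O(X)=O((\log T)^{2-\epsilon})$ dyadic ranges then loses only a logarithmic factor that is comfortably beaten by the $T^{-\epsilon\theta}$ saving.
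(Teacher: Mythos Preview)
Your overall strategy---factor out $1+O(1/\log X)$ from Lemma~\ref{alpha lem}, split $D=A+B$, handle the cross term by Cauchy--Schwarz, and kill the tail via Rankin---matches the paper. But you take an unnecessary detour in estimating $\mathcal{E}$, and that detour introduces real technical obstacles that the paper's argument avoids entirely.

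The key simplification you are missing is that the Rankin bound you describe already yields a \emph{pointwise} estimate on the tail, not merely a mean-square one. Since
\[
|B(\tfrac12+it)|\;\leq\;\sum_{\substack{n\in\mathcal{W}(X)\\ n>T^\theta}}\frac{|\alpha(n)|}{n^{1/2}}
\]
does not depend on $t$, the same Rankin-plus-Euler-product argument (exactly as between (\ref{P trunc}) and (\ref{P trunc 1}), using $|\alpha(n)|\ll d(n)$ and $X\ll(\log T)^{2-\epsilon}$) gives $|B(\tfrac12+it)|\ll T^{-\epsilon\theta/10}$ uniformly. Hence
\[
\mathcal{E}\;\ll\;T^{-\epsilon\theta/5}\int_T^{2T}|\zeta_\mathbb{K}(\tfrac12+it)|^2\,dt\;\ll\;T^{-\epsilon\theta/5}\cdot T\log^2 T
\]
by Motohashi's result~(\ref{second moment}), and the cross term is handled by Cauchy--Schwarz against this and the main integral. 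No approximate functional equation, no Montgomery--Vaughan on the product, no dyadic dissection.

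Your proposed route through the approximate functional equation and Montgomery--Vaughan on $\zeta_\mathbb{K}\cdot B$ is not obviously wrong, but it is not fully justified either: after the dyadic split you would need to control sums of the shape $\sum_n|(f_\mathbb{K}\ast\alpha_M)(n)|^2/n$, and the restriction $M<d\leq 2M$ on the $\alpha$-support destroys multiplicativity, so this is not the clean Euler-product computation you suggest. You would also need to treat the dual sum and the error term in the approximate functional equation separately. All of these issues evaporate once you recognise the pointwise tail bound.
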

\begin{proof}First, we write
\begin{equation}\label{Q trunc}Q_\mathbb{K}\left(\frac{1}{2}+it\right)=\sum_{\substack{n\in\mathcal{W}(X)\\n\leq T^\theta}}\frac{\alpha(n)}{n^{1/2+it}}+O\Bigg(\sum_{\substack{n\in\mathcal{W}(X)\\n> T^\theta}}\frac{\alpha(n)}{n^{1/2+it}}\Bigg).
\end{equation}
We can show, by using the bound $\alpha(n)\ll d(n)$ and a similar reasoning to that used between (\ref{P trunc}) and (\ref{P trunc 1}), that if $X\ll \log^{2-\epsilon}T$ then the error term is $\ll T^{-\epsilon\theta/10}$. Rewriting (\ref{Q trunc}) as $Q_\mathbb{K}(1/2+it)=\sum+O(T^{-\epsilon\theta/10})$  we see
\begin{multline}\frac{1}{T}\int_T^{2T}\left|\zeta_\mathbb{K}\left(\frac{1}{2}+it\right)Q_\mathbb{K}\left(\frac{1}{2}+it\right)\right|^2dt
\\=\frac{1}{T}\int_T^{2T}\left|\zeta_\mathbb{K}\left(\frac{1}{2}+it\right)\sum\right|^2dt+O\Bigg(\frac{1}{T^{1+\epsilon\theta/10}}\int_T^{2T}\left|\zeta_\mathbb{K}\left(\frac{1}{2}+it\right)\right|^2\left|\sum\right|dt\Bigg)
\\+O\Bigg(\frac{1}{T^{1+\epsilon\theta/5}}\int_T^{2T}\left|\zeta_\mathbb{K}\left(\frac{1}{2}+it\right)\right|^2dt\Bigg).
\end{multline}
The final term is $\ll T^{-\epsilon\theta/10}$ by Motohashi's result (\ref{second moment}). Using the Cauchy-Schwarz inequality we can show that the second term is 
\begin{equation}\begin{split}\ll &\frac{1}{T^{1+\epsilon\theta/10}}\Bigg(\int_T^{2T}\left|\zeta_\mathbb{K}\left(\frac{1}{2}+it\right)\sum\right|^2dt
\int_T^{2T}\left|\zeta_\mathbb{K}\left(\frac{1}{2}+it\right)\right|^2dt\Bigg)^{1/2}
\\\ll&\frac{1}{T^{1/2+\epsilon\theta/20}}\Bigg(\int_T^{2T}\left|\zeta_\mathbb{K}\left(\frac{1}{2}+it\right)\sum\right|^2dt
\Bigg)^{1/2}
\end{split}\end{equation}
and the result follows.
\end{proof}

We are now required to show that for $X,T\to\infty$ with $X\ll (\log T)^{2-\epsilon}$,
\begin{equation}\label{Q twist}\frac{1}{T}\int_T^{2T}\bigg|\zeta_\mathbb{K}\left(\frac{1}{2}+it\right)\sum_{\substack{n\in\mathcal{W}(X)\\n\leq T^\theta}}\frac{\alpha(n)}{n^{1/2+it}}\bigg|^2=\frac{\log T\cdot\log qT}{\left(e^\gamma \log X\right)^2}\left(1+O\left(\frac{1}{\log X}\right)\right).
\end{equation}
In order to state the formula given in \cite{me} we must first establish some notation. So, let 
$\alpha,\beta,\gamma,\delta$ be complex numbers $\ll1/\log T$ and let 
\begin{equation}\label{A}\begin{split}A_{\alpha,\beta,\gamma,\delta}(s)=&\zeta(1+\alpha+\gamma+s)\zeta(1+\beta+\delta+s)L(1+\beta+\gamma+s,\chi)\\&\times \frac{L(1+\alpha+\delta+s,\overline{\chi})}{\zeta(2+\alpha+\beta+\gamma+\delta+2s)}
\prod_{p|q}\left(\frac{1-p^{-1-s-\beta-\delta}}{1-p^{-2-2s-\alpha-\beta-\gamma-\delta}}\right).
\end{split}\end{equation}
For integers $h$ and $k$ let
\begin{equation}B_{\alpha,\beta,\gamma,\delta,h,k}(s,{\chi})=\prod_{p|hk}\frac{\sum_{j\geq 0}f_{\alpha,\beta}(p^{k_p+j},\chi)f_{\gamma,\delta}(p^{h_p+j},\overline{\chi})p^{-j(1+s)}}{\sum_{j\geq 0}f_{\alpha,\beta}(p^{j},\chi)f_{\gamma,\delta}(p^{j},\overline{\chi})p^{-j(1+s)}}
\end{equation}
where
\begin{equation}f_{\alpha,\beta}(n,\chi)=\sum_{n_1n_2=n}n_1^{-\alpha}n_2^{-\beta}\chi(n_2)
\end{equation} 
and where $h_p$ and $k_p$ are the highest powers of $p$ dividing $h$ and $k$ respectively.  
Now let 
\begin{equation}Z_{\alpha,\beta,\gamma,\delta,h,k}(s)=A_{\alpha,\beta,\gamma,\delta}(s)B_{\alpha,\beta,\gamma,\delta,h,k}(s,\chi).
\end{equation}
We must also define a slight variant of the above. For this we let
\begin{multline}\label{A prime}A_{\alpha,\beta,\gamma,\delta}^\prime (s,\chi)\\=\frac{L(1+\alpha+\gamma+s,\chi)L(1+\beta+\delta+s,\chi)L(1+\alpha+\delta+s,\chi)L(1+\beta+\gamma+s,\chi)}{L(2+\alpha+\beta+\gamma+\delta+2s,\chi^2)}
\end{multline}
and
\begin{equation}B^\prime_{\alpha,\beta,\gamma,\delta,h,k}(s,\chi)=\prod_{p|hk}\frac{\sum_{j\geq 0}\chi(p^j)\sigma_{\alpha,\beta}(p^{k_p+j})\sigma_{\gamma,\delta}(p^{h_p+j})p^{-j(1+s)}}{\sum_{j\geq 0}\chi(p^j)\sigma_{\alpha,\beta}(p^{j})\sigma_{\gamma,\delta}(p^{j})p^{-j(1+s)}}.
\end{equation}
where
\begin{equation}\sigma_{\alpha,\beta}(n)=\sum_{n_1n_2=n}n_1^{-\alpha}n_2^{-\beta}.
\end{equation}
Now let
\begin{equation}Z^\prime_{\alpha,\beta,\gamma,\delta,h,k}(s,\chi)=\overline{G(\chi)}A^\prime_{\alpha,\beta,\gamma,\delta}(s,\chi)B^\prime_{\alpha,\beta,\gamma,\delta,h,k}(s,\chi)
\end{equation}
where $G(\chi)$ is the Gauss sum associated to $\chi$.

\begin{thm}[\cite{me}]\label{twisted thm}
Let
\begin{equation}\begin{split}I(h,k)=&\int_{-\infty}^\infty\left(\frac{h}k\right)^{-it}\zeta\left(\frac{1}2+\alpha+it\right)L\left(\frac{1}2+\beta+it,\chi\right)\\&\times\zeta\left(\frac{1}2+\gamma-it\right)L\left(\frac{1}2+\delta-it,\overline{\chi}\right)w(t)dt
\end{split}\end{equation}
where $w(t)$ is a smooth, nonnegative function with support contained in $[T/2,4T]$, satisfying $w^{(j)}(t)\ll_j T_0^{-j}$ for all $j=0,1,2,\ldots,$ where $T^{1/2+\epsilon}\ll T_0 \ll T$. Suppose $(h,k)=1$ and that $hk\leq T^{\frac{2}{11}-\epsilon}$. Then
\begin{equation}\label{main thm form}\begin{split}
I(h,k)=&\frac{1}{\sqrt{hk}}\int_{-\infty}^\infty w(t)\bigg(Z_{\alpha,\beta,\gamma,\delta,h,k}(0)+\frac{1}{q^{\beta+\delta}} Z_{-\gamma,-\delta,-\alpha,-\beta,h,k}(0)\left(\frac{t}{2\pi}\right)^{-\alpha-\beta-\gamma-\delta}
\\&+Z_{-\gamma,\beta,-\alpha,\delta,h,k}(0)\left(\frac{t}{2\pi}\right)^{-\alpha-\gamma}+\frac{1}{q^{\beta+\delta}}Z_{\alpha,-\delta,\gamma,-\beta,h,k}(0)\left(\frac{t}{2\pi}\right)^{-\beta-\delta}
\\&+{\bf 1}_{q|h}\frac{\chi(k)}{q^\delta}Z^\prime_{-\delta,\beta,\gamma,-\alpha,\frac{h}q,k}(0,\chi)\left(\frac{t}{2\pi}\right)^{-\alpha-\delta} 
\\&+{\bf 1}_{q|k}\frac{\overline{\chi}(h)}{q^\beta} Z^\prime_{\alpha,-\gamma,-\beta,\delta,h,\frac{k}q}(0,\overline{\chi})\left(\frac{t}{2\pi}\right)^{-\beta-\gamma}\bigg)dt+E(T)
\end{split}\end{equation}
where
\begin{equation}E(T)\ll T^{3/4+\epsilon}(hk)^{7/8+\epsilon}q^{1+\epsilon}(T/T_0)^{9/4}.
\end{equation}
\end{thm}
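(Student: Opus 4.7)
The plan is to prove Theorem \ref{twisted thm} by the standard twisted second-moment machinery applied to the degree-$2$ object $\zeta(s)L(s,\chi)$. The first step is to apply an approximate functional equation to each of the two products $\zeta(\tfrac{1}{2}+\alpha+it)L(\tfrac{1}{2}+\beta+it,\chi)$ and $\zeta(\tfrac{1}{2}+\gamma-it)L(\tfrac{1}{2}+\delta-it,\overline{\chi})$. Since the Dirichlet coefficients of the first product are precisely $f_{\alpha,\beta}(n,\chi)$, and the completed functional equation of $\zeta(s)L(s,\chi)$ carries a root number involving $G(\chi)/\sqrt{q}$, each AFE writes its product as a ``primary'' sum plus a ``dual'' sum with shifts flipped, characters conjugated, and an overall factor of the shape $(qt^2/(2\pi)^2)^{-\mathrm{shift}}$ times a Gauss-sum phase. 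Multiplying the two expansions together with the twist $(h/k)^{-it}$ and the weight $w(t)$ yields four families of double Dirichlet sums to integrate against $t$.

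The second step is to carry out the $t$-integration family by family. The two combinations primary $\times$ primary and dual $\times$ dual yield pairings in which the characters $\chi$ and $\overline{\chi}$ still match, and Mellin inversion reduces the $t$-integral to a contour integral of a ratio of $\zeta$-- and $L$--factors whose residue at $s=0$ is precisely $A_{\alpha,\beta,\gamma,\delta}(0)$ of (\ref{A}); the local correction at primes dividing $hk$ assembles into the Euler product $B_{\alpha,\beta,\gamma,\delta,h,k}(0)$. Tracking the various swaps of subsets of $\{\alpha,\gamma\}$ and $\{\beta,\delta\}$ induced by applying the functional equation to the $\zeta$-part, the $L$-part, or both, produces the first four terms of (\ref{main thm form}), with their respective factors $(t/2\pi)^{-\alpha-\beta-\gamma-\delta}$, $(t/2\pi)^{-\alpha-\gamma}$, $(t/2\pi)^{-\beta-\delta}$ and Gauss-phase factors $q^{-\beta-\delta}$.

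The cross pairings primary $\times$ dual and dual $\times$ primary produce sums in which the two character factors combine into an additive character of modulus $q$. Applying Poisson summation to the variable on which this additive character acts splits the resulting sum into a zero-frequency main term and nonzero-frequency contributions. The zero frequency is nonvanishing only when $q\mid h$ or $q\mid k$, which is exactly where the indicators $\mathbf{1}_{q\mid h}$ and $\mathbf{1}_{q\mid k}$ come from. In these exceptional cases all four $L$-factors now carry the same character, producing the quotient $L^4/L(\cdot,\chi^2)$ of (\ref{A prime}) with local factor $B'$ and the overall Gauss-sum constant $\overline{G(\chi)}$; these are precisely the two $Z'$ terms in (\ref{main thm form}).

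The main obstacle will be the hybrid error estimate $E(T)\ll T^{3/4+\epsilon}(hk)^{7/8+\epsilon}q^{1+\epsilon}(T/T_0)^{9/4}$. The nonzero frequencies in the Poisson/Voronoi steps reduce to Kloosterman-type sums of modulus essentially $hk$ twisted by $\chi$; controlling them uniformly in the shifts and in $q$ requires the Weil bound interpolated with the trivial bound, together with stationary-phase analysis of the Bessel-type transforms of $w$, and this is what drives the exponents $3/4$ in $T$ and $7/8$ in $hk$. The $(T/T_0)^{9/4}$ factor will come from repeated integration by parts against $w$ in the associated oscillatory integrals. Careful bookkeeping is also needed to show that the apparent poles of $A_{\alpha,\beta,\gamma,\delta}$ at coincident shifts cancel across the sixteen AFE cross-terms, so that after analytic continuation only the six main terms of (\ref{main thm form}) survive.
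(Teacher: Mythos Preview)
The paper does not actually prove Theorem~\ref{twisted thm}: it is imported wholesale from the author's companion paper \cite{me} and is simply quoted here as input for the proof of Theorem~\ref{z moment thm} and Proposition~\ref{my thm}. So there is no ``paper's own proof'' to compare against.

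That said, your outline is a reasonable sketch of the standard route to such a formula (in the spirit of the Hughes--Young twisted fourth moment of $\zeta$, adapted to $\zeta(s)L(s,\chi)$): approximate functional equations for each factor, leading to four main-term blocks corresponding to the four swaps of $\{\alpha,\gamma\}$ and $\{\beta,\delta\}$; the two ``exceptional'' $Z'$ terms arising from the off-diagonal when $q\mid h$ or $q\mid k$; and an error term governed by Weil-bounded Kloosterman-type sums together with stationary phase in the integral transforms of $w$. One point to flag: you describe the cross terms as coming from a ``primary $\times$ dual'' pairing followed by Poisson, but in practice the cleanest route (and the one taken in \cite{me}) is to open the diagonal condition $hm=kn$ directly and apply the shifted-divisor/Voronoi machinery to the resulting correlation sums of $f_{\alpha,\beta}$ against $f_{\gamma,\delta}$; the $Z'$ terms then emerge from the main terms of the associated character-twisted divisor problem rather than from a separate AFE cross-pairing. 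Your accounting of the exponents $3/4$ in $T$, $7/8$ in $hk$, and $9/4$ in $T/T_0$ is consistent with what that analysis produces, but of course the actual work lies entirely in carrying out those estimates, which your proposal only names.
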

Now take a Dirichlet polynomial $M(s)=\sum_{n\leq T^\theta}a(n)n^{-s}$ with $\theta\leq 1/11-\epsilon$ and let $w(t)$ satisfy the conditions of Theorem \ref{twisted thm}. Then, upon expanding, we have 
\begin{multline}\int_{-\infty}^{\infty}\left|\zeta_{\mathbb{K}}\left(\frac{1}2+it\right)\right|^2\left|M\left(\frac{1}2+it\right)\right|^2w(t)dt\\=\sum_{h,k\leq T^\theta}\frac{a(h)\overline{a(k)}}{\sqrt{hk}}(h,k)\lim_{\alpha,\beta,\gamma,\delta\to 0}I(h_k,k_h)
\end{multline}
where $h_k=h/(h,k)$. In order to evaluate this inner limit we express $Z_{\alpha,\beta,\gamma,\delta,h,k}(0)$ as a Laurent series and express the other terms as Taylor series. In doing this, the only real difficulty lies in calculating the derivatives of $B_ {\alpha,\beta,\gamma,\delta,h,k}(0)$. For our purposes, which is to work over $X$-smooth numbers, we only need upper bounds however. The first order derivatives of $B_{\alpha,\beta,\gamma,\delta,h,k}(0)$ are
\begin{equation}\ll B_{0,0,0,0,h,k}(0)\bigg(\log hk+\sum_{p|hk}\frac{\log p}{p}\bigg)\ll B_{0,0,0,0,h,k}(0)\big(\log hk+\log\log hk\big).
\end{equation} 
Similarly, one finds that the second order derivatives are 
\begin{equation}\ll B_{0,0,0,0,h,k}(0)\big(\log^2 hk+\log hk\log\log hk+\log^2\log hk\big).
\end{equation} 
A short calculation gives
\begin{equation}
B_{0,0,0,0,h,k}(0)=\delta(h)\delta(k)
\end{equation}
where 
\begin{equation}\delta(h)=\begin{cases}\prod_{\substack{p|h\\p \,\,\,\mathrm{split}}}\left(1+h_p\frac{1-p^{-1}}{1+p^{-1}}\right)& \text{{if $h_{\mathrm{i}}$ is square}}\\ 0&\text{otherwise}\end{cases}
\end{equation}
and $h_{\mathrm{i}}$ is the greatest factor of $h$ composed solely of inert primes. 

Upon taking the limit as $\alpha,\beta,\gamma,\delta\to 0$ and taking smooth approximations to the characteristic function of the interval $[T,2T]$ with $T_0=T^{1-\epsilon}$ we get the following

\begin{prop}\label{my thm}
Let $M(s)=\sum_{n\leq T^\theta}a(n)n^{-s}$ with $\theta\leq 1/11-\epsilon$. Then,
\begin{multline}\label{ded twist}\frac{1}{T}\int_T^{2T}\left|\zeta_{\mathbb{K}}\left(\frac{1}2+it\right)\right|^2\left|M\left(\frac{1}2+it\right)\right|^2dt\\=\sum_{h,k\leq T^\theta}\frac{a(h)\overline{a(k)}}{hk}(h,k)\Bigg[ \sum_{n=0}^2c_n(h,k,T)
+O\left( T^{-\frac{1}{4}+\epsilon}\left(h_kk_h\right)^{7/8+\epsilon}\right)\Bigg].
\end{multline}
The leading order term is given by
\begin{multline}\label{c_2}c_2(h,k,T)= \frac{6}{\pi^2}L(1,\chi)^2\prod_{p|d_\mathbb{K}}\left(1+\frac{1}{p}\right)^{-1}
\\\times\delta(h_{k})\delta(k_{h})\bigg[\log T\cdot\log qT+O(\log T\log h_kk_h)\bigg].
\end{multline}
 For the lower order terms we have \begin{equation}\label{c_1}c_1(h,k,T)\ll \delta(h_{k})\delta(k_{h})\log T\log\log h_kk_h\end{equation}
and 
\begin{equation}\begin{split}\label{c_0}c_0(h,k,T)=&c^\prime_0(h,k,T)+{\bf 1}_{q|h_k}{\chi(k_h)}Z^\prime_{0,0,0,0,\frac{h_k}q,k_h}(0,\chi)
\\&+{\bf 1}_{q|k_h}{{\chi}(h_k)}Z^\prime_{0,0,0,0,{h_k},\frac{k_h}q}(0,\overline{\chi})\end{split}\end{equation}
with
\begin{equation}\label{c_0^'}c^\prime_0(h,k,T)\ll \delta(h_k)\delta(k_h)(\log\log h_kk_h)^2.\end{equation}
The $Z^\prime$ terms may be written as  
\begin{equation}\begin{split}\label{Z^prime}Z^\prime_{0,0,0,0,m,n}(0,\chi)=
\overline{G(\chi)}\frac{L(1,\chi)^4}{L(2,\chi^2)}\delta^\prime(m)\delta^\prime(n)
\end{split}\end{equation}
where
\begin{equation}\delta^\prime(m)=\prod_{\substack{p|m\\p \text{ split}}}\left(1+m_p\frac{p-1}{p+1}\right)\prod_{\substack{p|m\\p \text{ inert}}}\left(1+m_p\frac{p+1}{p-1}\right).
\end{equation}     
\end{prop}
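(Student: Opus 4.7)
The plan is to open the square in $|M(1/2+it)|^2$, swap the (finite) sum with the integral, and apply Theorem \ref{twisted thm} to each resulting integral. Writing $h=(h,k)h_k$ and $k=(h,k)k_h$ with $(h_k,k_h)=1$, one has $h/k=h_k/k_h$ and $\sqrt{hk}=(h,k)\sqrt{h_kk_h}$, so the integral attached to a pair $(h,k)$ equals $I(h_k,k_h)$ evaluated at $\alpha=\beta=\gamma=\delta=0$, for $w$ a smooth approximation to $\mathbf{1}_{[T,2T]}$ with $T_0=T^{1-\epsilon}$. Since $h_kk_h\leq T^{2\theta}$ and $\theta\leq 1/11-\epsilon$, the hypothesis $hk\leq T^{2/11-\epsilon}$ of Theorem \ref{twisted thm} is satisfied; the prefactor $1/\sqrt{h_kk_h}$ from the theorem combines with $1/((h,k)\sqrt{h_kk_h})$ from opening the square to give exactly the $(h,k)/(hk)$ in (\ref{ded twist}). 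The stated error $T^{-1/4+\epsilon}(h_kk_h)^{7/8+\epsilon}$ then follows from $E(T)/T$ with $T_0=T^{1-\epsilon}$.

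The substantive work is to take the limit $\alpha,\beta,\gamma,\delta\to 0$ of the six-term bracket in (\ref{main thm form}). The factor $A_{\alpha,\beta,\gamma,\delta}(0)$ has simple poles along $\alpha+\gamma=0$ and $\beta+\delta=0$ from the two $\zeta$'s, while the shift powers $(t/2\pi)^{-\alpha-\gamma}$, $(t/2\pi)^{-\beta-\delta}$, etc.\ expand into Taylor series producing logarithms. The standard device is to repackage the sum of the four pole-bearing $Z$-terms as a double contour integral around the origin, whose integrand is holomorphic; expanding inside the integral produces a polynomial in $\log(t/2\pi)$ of degree $2$ whose leading coefficient involves the residue factor
\[
A^\ast(0):=\lim_{(\alpha,\beta,\gamma,\delta)\to 0}(\alpha+\gamma)(\beta+\delta)A_{\alpha,\beta,\gamma,\delta}(0)=\frac{6}{\pi^2}L(1,\chi)^2\prod_{p|d_\mathbb{K}}\!\bigg(1+\frac{1}{p}\bigg)^{-1}.
\]
A short prime-by-prime Euler-factor calculation verifies $B_{0,0,0,0,h,k}(0)=\delta(h)\delta(k)$. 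Evaluating the $w$-integral via $\int w(t)\log^j(t/2\pi)\,dt\sim T\log^j T$, and accounting for the two independent shift combinations (one producing $\log T$, the other $\log qT$ since it is attached to the $L$-factor whose functional equation carries the conductor $q$), yields $c_2(h,k,T)$ as in (\ref{c_2}).

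To separate $c_2$ from the lower-order coefficients I use the derivative bounds on $B$ already stated in the excerpt: first derivatives are $\ll B_{0,0,0,0,h,k}(0)(\log hk+\log\log hk)$, contributing the $O(\log T\log h_kk_h)$ absorbed into $c_2$; second derivatives, together with the first-order Taylor coefficients of $A$ at the origin, give the bounds $c_1\ll\delta(h_k)\delta(k_h)\log T\log\log h_kk_h$ and $c_0'\ll\delta(h_k)\delta(k_h)(\log\log h_kk_h)^2$. The remaining two $Z'$-terms in (\ref{main thm form}) appear only when $q\mid h_k$ or $q\mid k_h$, are analytic at the origin, and contribute directly to $c_0$ with no logarithmic power; the Euler-factor identification $B'_{0,0,0,0,m,n}(0)=\delta'(m)\delta'(n)$, combined with the value of $A'_{0,0,0,0}(0)$ at the origin, gives (\ref{Z^prime}).

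The main obstacle is the combinatorial bookkeeping of the four-variable limit: the four pole-bearing $Z$-terms, each a Laurent-times-Taylor expansion in $(\alpha,\beta,\gamma,\delta)$, must be matched order by order so that all poles cancel and the $\log^2 T$, $\log T$, and constant contributions can be separated cleanly. The contour-integral reformulation described above is what makes this manageable; once it is in place, the rest of the argument reduces to routine residue computations and the prime-by-prime Euler-factor identities that produce $\delta$, $\delta'$, and the stated derivative bounds on $B$.
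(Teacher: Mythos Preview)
Your proposal is correct and follows essentially the same route as the paper: open the square, apply Theorem~\ref{twisted thm} to each $I(h_k,k_h)$ with a smooth weight $w$ approximating $\mathbf{1}_{[T,2T]}$ and $T_0=T^{1-\epsilon}$, and then pass to the limit $\alpha,\beta,\gamma,\delta\to 0$ using the derivative bounds on $B$ and the Euler-product identity $B_{0,0,0,0,h,k}(0)=\delta(h)\delta(k)$. The only point of departure is organisational: the paper handles the four pole-bearing $Z$-terms by writing $Z_{\alpha,\beta,\gamma,\delta,h,k}(0)$ as a Laurent series and the remaining terms as Taylor series, then matching orders directly, whereas you repackage these four terms as a double contour integral with holomorphic integrand before expanding. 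Both devices accomplish the same cancellation of the simple poles in $\alpha+\gamma$ and $\beta+\delta$ and lead to the same polynomial in $\log T$; your version is arguably cleaner for the bookkeeping you flag as the ``main obstacle'', while the paper's direct expansion is more elementary but terser.
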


\subsection{Evaluating the main term} 

\begin{prop}\label{main term prop}Let $c_2(h,k,T)$ be given by (\ref{c_2}) and let $\alpha(n)$ be defined as in Lemma \ref{alpha lem}. Suppose  $X,T\to\infty$ with $X\ll (\log T)^{2-\epsilon}$. Then 
\begin{equation}\sum_{\substack{h,k\leq T^{\theta}\\h,k\in\mathcal{W}(X)}}\frac{\alpha(h)\alpha(k)c_2(h,k,T)}{hk}(h,k)=(1+o(1))\frac{\log T\cdot\log qT}{\left(e^\gamma \log X\right)^2}.
\end{equation}
\end{prop}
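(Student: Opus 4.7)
The plan is to substitute the explicit form of $c_2(h,k,T)$ from (\ref{c_2}) into the sum, extend the outer summation from $h,k \leq T^\theta$ to the whole set $\mathcal{W}(X)$, factor the resulting arithmetic sum as an Euler product, compute each local factor using Lemma \ref{alpha lem}, and then match the product against the Dedekind analogue of Mertens' theorem used at the end of \S\ref{p moment sec}.

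Writing $C := \tfrac{6}{\pi^2} L(1,\chi)^2 \prod_{p \mid d_\mathbb{K}}(1+p^{-1})^{-1}$, the formula (\ref{c_2}) decomposes the target into
$$C\log T \log qT\cdot S(X) \; + \; O\!\Bigl(C\log T\cdot S'(X)\Bigr),$$
where
$$S(X) := \sum_{\substack{h,k\in\mathcal{W}(X) \\ h,k \leq T^\theta}} \frac{\alpha(h)\alpha(k)\,(h,k)\,\delta(h_k)\delta(k_h)}{hk}$$
and $S'(X)$ is the same sum weighted by the extra factor $\log h_kk_h$. Using $\alpha(n) \ll d(n)$ together with the tail estimate that produced (\ref{P trunc 1}), one extends $S(X)$ to the full sum over $h,k \in \mathcal{W}(X)$ with a power-saving error.

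Since $\alpha$ and $\delta$ are multiplicative and $(h,k) = \prod_p p^{\min(h_p,k_p)}$, the infinite sum factors as an Euler product $S(X) = \prod_p S_p(X)$ with
$$S_p(X) = \sum_{i,j \geq 0} \frac{\alpha(p^i)\alpha(p^j)\,p^{\min(i,j)}}{p^{i+j}}\,\delta_p\bigl(p^{\max(0,i-j)}\bigr)\,\delta_p\bigl(p^{\max(0,j-i)}\bigr).$$
Because $\alpha(p^j)$ vanishes for $j \geq 3$, vanishes at odd powers of inert primes, and vanishes entirely whenever $\mathfrak{N}(\mathfrak{p}) > X$, each $S_p(X)$ is a short finite sum. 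Evaluating case-by-case according to the splitting type of $p$ and the size range dictated by Lemma \ref{alpha lem} (with relevant thresholds $X^{1/4}$, $\sqrt{X}$, $X$), one finds e.g.\ $S_p(X) = (1-1/p)^3/(1+1/p)$ for split $p \leq \sqrt{X}$, $S_p(X) = 1 - 1/p^2$ for inert $p \leq X^{1/4}$, and $S_p(X) = 1 - 1/p$ for ramified $p \leq \sqrt{X}$. For primes in the intermediate or outer size ranges, $S_p(X)$ differs from its small-$p$ counterpart by a multiplicative factor of the form $1 + O(p^{-2})$ (or better).

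Rearranging, $S(X) = A(X) \cdot \zeta(2) \prod_{p \mid d_\mathbb{K}}(1 + p^{-1}) \prod_{\mathfrak{N}(\mathfrak{p}) \leq X}(1-1/\mathfrak{N}(\mathfrak{p}))^2$, where $A(X) = 1 + O(X^{-1/4})$ as $X \to \infty$. The Dedekind analogue of Mertens' theorem (as used at the end of \S\ref{p moment sec}) gives $\prod_{\mathfrak{N}(\mathfrak{p}) \leq X}(1-1/\mathfrak{N}(\mathfrak{p}))^2 \sim 1/(L(1,\chi)^2 e^{2\gamma}\log^2 X)$, so $S(X) \sim 1/(C(e^\gamma\log X)^2)$; multiplying by the prefactor $C\log T\log qT$ recovers the claimed asymptotic. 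The $S'(X)$ contribution is handled by differentiating the local Euler factors: the weight $\log h_kk_h$ inserts factors $\log p$ at primes up to $X$, effectively replacing one $\log T$ by $O(\log X)$ to produce an error of size $\log T/\log X$, which is lower order under the hypothesis $X \ll (\log T)^{2-\epsilon}$. The principal obstacle is the case-by-case verification that the local factors combine — after absorbing the $1 + O(p^{-2})$ corrections from the intermediate ranges — to reproduce exactly the combination $\zeta(2) \prod_{p \mid d_\mathbb{K}}(1+p^{-1}) \prod(1-1/\mathfrak{N}(\mathfrak{p}))^2$ whose Mertens asymptotic cancels the prefactor $C$.
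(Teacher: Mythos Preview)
Your approach is correct and reaches the same conclusion as the paper, but by a different and somewhat more direct route. The paper first groups terms by $g=(h,k)$, then removes the residual coprimality condition via a M\"obius-type function $\mu'$ adapted to $\mathrm{Im}(\mathfrak{N})$, obtaining
\[
S=\sum_{g}\frac{1}{g}\sum_{l}\frac{\mu'(l)}{l^{2}}\Bigg(\sum_{m}\frac{\alpha(glm)\delta(lm)}{m}\Bigg)^{2},
\]
and only then factors into an Euler product $\prod G(p)$. You instead observe that the original summand $\alpha(h)\alpha(k)(h,k)\delta(h_k)\delta(k_h)/(hk)$ is already jointly multiplicative in $(h,k)$, so the double sum factors directly as $\prod_p S_p(X)$. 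The two local factors agree --- your $S_p(X)$ is exactly the paper's $G(p)$ --- and both are then matched against $\prod_{\mathfrak{N}(\mathfrak{p})\leq X}(1-1/\mathfrak{N}(\mathfrak{p}))^2$ and Mertens. What the paper's $g,l,m$ decomposition buys is a cleaner extension step: each of the three sums is extended to all of $\mathcal{W}(X)$ in turn, with the square structure isolating the tail. In your route, the extension of the double sum with its $(h,k)$ weight is a little more delicate than the single-variable tail of~(\ref{P trunc 1}) that you cite (one still needs to group by $g=(h,k)$ to control the tail), and your ``differentiate the Euler factors'' treatment of $S'(X)$ requires checking that each $S_p$ stays bounded away from zero --- the paper sidesteps this by taking absolute values and accepting a crude $\log^{14}X$ bound. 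Both points are routine to fill in, and your argument is otherwise complete.
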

\begin{proof}
Inputting the formula for $c_2(h,k,T)$ we see that we are required to show 
\begin{equation}\begin{split}S_0:=&\sum_{\substack{h,k\leq T^{\theta}\\h,k\in\mathcal{W}(X)}}\frac{\alpha(h)\alpha(k)\delta(h_k)\delta(k_h)}{hk}(h,k)\Big[\log T\cdot\log qT+O(\log T\log h_kk_h)\Big]
\\=&(1+o(1))\frac{\pi^2}{6}L(1,\chi)^{-2}\prod_{p|d_\mathbb{K}}\left(1+\frac{1}{p}\right)
\frac{\log T\cdot\log qT}{\left(e^\gamma \log X\right)^2}.
\end{split} 
\end{equation}
We first group together the terms for which $(h,k)=g$. Replacing $h$ by $hg$ and $k$ by $kg$ we obtain
\begin{multline}\label{S_0}S_0=\sum_{\substack{g\leq Y\\g\in\mathcal{W}(X)}}\frac{1}{g}\sum_{\substack{k\leq Y/g\\k\in\mathcal{W}(X)}}\frac{\alpha(kg)\delta(k)}{k}
\sum_{\substack{h\leq Y/g\\h\in\mathcal{W}(X)\\(h,k)=1}}\frac{\alpha(hg)\delta(h)}{h}
\\\times\Big[\log T\cdot\log qT+O(\log T\log hk)\Big]
\end{multline}
where $Y=T^{\theta}$. Let us first estimate the error term. We have
\begin{equation}\begin{split}&\sum_{\substack{g\leq Y\\g\in\mathcal{W}(X)}}\frac{1}{g}\sum_{\substack{k\leq Y/g\\k\in\mathcal{W}(X)}}\frac{\alpha(kg)\delta(k)}{k}\sum_{\substack{h\leq Y/g\\h\in\mathcal{W}(X)\\(h,k)=1}}\frac{\alpha(hg)\delta(h)}{h}\log\left(hk\right)
\\\ll&\sum_{\substack{g\in\mathcal{L}(X)}}\frac{d(g)^2}{g}\sum_{\substack{h,k\in\mathcal{L}(X)}}\frac{d(k)^2d(h)^2}{hk}\log{hk}
\\\ll&\sum_{\substack{g\in\mathcal{L}(X)}}\frac{d(g)^2}{g}\Bigg(\sum_{\substack{m\in\mathcal{L}(X)}}\frac{d(m)^2\log{m}}{m}\Bigg)^2.
\end{split}
\end{equation}
Writing $f(\sigma)=\sum_{m\in\mathcal{L}(X)}d(m)^2m^{-\sigma}$ the inner sum is $-f^{\prime}(1)$. Since $f(\sigma)=\prod_{p\leq X}(1-p^{-\sigma})^{-4}(1-p^{-2\sigma})$ we see $f^{\prime}(1)\ll f(1)\sum_{p\leq X}\log p/(p-1)\ll \log^5 X$ and hence the above sum is $\ll\log^{14} X$. We can now turn to the main term and consider
\begin{equation}\label{S first}S:=\sum_{\substack{g\leq Y\\g\in\mathcal{W}(X)}}\frac{1}{g}\sum_{\substack{k\leq Y/g\\k\in\mathcal{W}(X)}}\frac{\alpha(kg)\delta(k)}{k}\sum_{\substack{h\leq Y/g\\h\in\mathcal{W}(X)\\(h,k)=1}}\frac{\alpha(hg)\delta(h)}{h}.
\end{equation}
We define the function $\mu^\prime:\mathrm{Im}(\mathfrak{N})\to\mathbb{C}$, $\mathfrak{N}(\mathfrak{a})\mapsto \mu(\mathfrak{a})$ where $\mu$ is the extension of the usual m\"obius function to ideals given by
\begin{equation}\mu(\mathfrak{a})=
\begin{cases}1&\text{\,if\,\,\,}\mathfrak{a}=\mathcal{O}_\mathbb{K},
\\(-1)^r&\text{\,if\,\,\,}\mathfrak{a}=\mathfrak{p}_1\mathfrak{p}_2\ldots\mathfrak{p}_r,
\\ 0 &\text{\,otherwise.}
\end{cases}
\end{equation}
So basically; for split and ramified primes $\mu^\prime(p)=-1$ and $\mu^\prime(p^j)=0$ for $j\geq 2$; for inert primes   $\mu(p^2)=-1$ and $\mu(p^{2j})=0$ for $j\geq 2$, and $\mu^\prime$ is multiplicative.   Similarly to the usual m\"obius function we now have 
\begin{equation}\sum_{\substack{d|h\\d|k\\d\in\mathrm{Im}(\mathfrak{N})}}\mu^\prime(d)=\begin{cases}1&\text{\,if\,\,\,}(h,k)=1
\\0&\text{otherwise}
\end{cases}
\end{equation}
 for $h,k\in\mathrm{Im}(\mathfrak{N})$. Substituting this into the sum over $h$ in $S$ we see
\begin{equation}\label{S mu}
\begin{split}
S=&\sum_{\substack{g\leq Y\\g\in\mathcal{W}(X)}}\frac{1}{g}\sum_{\substack{k\leq Y/g\\k\in\mathcal{W}(X)}}\frac{\alpha(kg)\delta(k)}{k}\sum_{\substack{h\leq Y/g\\h\in\mathcal{W}(X)}}\bigg(\sum_{\substack{d|h\\d|k\\d\in\mathrm{Im}(\mathfrak{N})}}\mu^\prime(d)\bigg)\frac{\alpha(hg)\delta(h)}{h}
\\=&\sum_{\substack{g\leq Y\\g\in\mathcal{W}(X)}}\frac{1}{g}\sum_{\substack{l\leq Y/g\\l\in\mathcal{W}(X)}}\frac{\mu^\prime(l)}{l^2}\Bigg(\sum_{\substack{m\leq Y/gl\\m\in\mathcal{W}(X)}}\frac{\alpha(glm)\delta(lm)}{m}\Bigg)^2.
\end{split}
\end{equation}
Manipulating the sums in this way allows us to avoid the rather technical and lengthy calculations involved in \cite{hybrid}.

We wish to extend these sums over all $\mathcal{W}(X)$ and this requires some estimates. These will follow in a similar fashion to that found between (\ref{P trunc}) and (\ref{P trunc 1}). Throughout we use $\alpha(m),\delta(m)\ll d(m)$ and $d(mn)\leq d(m)d(n)$. First, let $b$ be positive and small, then 
\begin{equation}
\begin{split}
\frac{1}{d(g)d(l)^2}\sum_{\substack{m> Y/lg\\m\in\mathcal{W}(X)}}\frac{\alpha(glm)\delta(lm)}{m}\ll&
\sum_{\substack{m> Y/lg\\m\in\mathcal{W}(X)}}\frac{d(m)^2}{m}
\ll\left(\frac{Y}{lg}\right)^{-b}\sum_{\substack{m\in\mathcal{W}(X)}}\frac{d(m)^2}{m^{1-b}}
\\\ll&\left(\frac{Y}{lg}\right)^{-b}\prod_{p\leq X}\left(1-p^{-1+b}\right)^{-4}\left(1-p^{-2(1-b)}\right)
\\\ll&\left(\frac{Y}{lg}\right)^{-b}e^{8X^b/\log X}\ll (lg)^bT^{-b\theta/2}.
\end{split}
\end{equation} 
 Second,
\begin{equation}\sum_{\substack{m\in\mathcal{W}(X)}}\frac{\alpha(glm)\delta(lm)}{m}\ll d(g)d(l)^2\sum_{\substack{m\in\mathcal{W}(X)}}\frac{d(m)^2}{m}\ll d(g)d(l)^2\log^4 X.
\end{equation}
From these it follows that the square of the sum over $m$ in (\ref{S mu}) is 
\begin{equation}\Bigg(\sum_{m\in\mathcal{W}(X)}\frac{\alpha(glm)\delta(lm)}{m}\Bigg)^2+O\left(d(g)^{2}d(l)^{4}(lg)^{2b} T^{-b\theta/4}\right).
\end{equation}
Similarly we find 
\begin{equation}\sum_{\substack{l\in\mathcal{W}(X)}}\frac{\mu^\prime(l)d(l)^4}{l^{2-2b}}\ll 1,\,\,\,\,\,\,\,\sum_{\substack{l>Y/g \\l\in\mathcal{W}(X)}}\frac{\mu^\prime(l)d(l)^4}{l^{2-2b}}\ll g^c T^{-c\theta},
\end{equation}
for some small $c>0$, and
\begin{equation}\sum_{\substack{g\in\mathcal{W}(X)}}\frac{d(g)^2}{g^{1-2b-c}}\ll T^\epsilon,\,\,\,\,\,\sum_{\substack{g>Y\\g\in\mathcal{W}(X)}}\frac{d(g)^2}{g^{1-2b-c}}\ll T^{-d\theta}
\end{equation}
for some small $d>0$. The above estimates give 
\begin{equation}
\begin{split}
S=&\bigg(\sum_{g\in\mathcal{W}(X)}-\sum_{\substack{g>Y\\g\in\mathcal{W}(X)}}\bigg)\frac{1}{g}\bigg(\sum_{l\in\mathcal{W}(X)}-\sum_{\substack{l>Y/g\\l\in\mathcal{W}(X)}}\bigg)\frac{\mu^\prime(l)}{l^2}
\\&\times \bigg[\Bigg(\sum_{m\in\mathcal{W}(X)}\frac{\alpha(glm)\delta(lm)}{m}\Bigg)^2+O\left(d(g)^{2}d(l)^{4}(lg)^{2b} T^{-b\theta/4}\right)\bigg]
\\=&(1+o(1))\sum_{\substack{g\in\mathcal{W}(X)}}\frac{1}{g}\sum_{\substack{l\in\mathcal{W}(X)}}\frac{\mu^\prime(l)}{l^2}\Bigg(\sum_{\substack{m\in\mathcal{W}(X)}}\frac{\alpha(glm)\delta(lm)}{m}\Bigg)^2.
\end{split}
\end{equation}
 Now,  since all coefficients in $S$ are multiplicative we may expand the sum into an Euler product:
\begin{equation}
\begin{split}\label{S prod}S
=&(1+o(1))\prod_{\substack{p\leq X\\p\mathrm{\,\,split}}}G(p)\prod_{\substack{p\leq \sqrt{X}\\p\mathrm{\,\,inert}}}G(p^2)\prod_{\substack{p\leq X\\p\mathrm{\,\,ramified}}}G(p)
\end{split}
\end{equation} 
with
\begin{equation}G(p)=\sum_{i,j,u,v\geq 0}\frac{\mu^\prime(p^j)\alpha(p^{i+j+u})\alpha(p^{i+j+v})\delta(p^{j+u})\delta(p^{j+v})}{p^{i+2j+u+v}}.
\end{equation}
Performing the various sums whilst using the support conditions of $\alpha$ and $\mu^\prime$ we see
\begin{equation}
\begin{split}
G(p)=&1+\frac{2\alpha(p)\delta(p)+\alpha(p)^2}{p}+\frac{2\alpha(p^2)\delta(p^2)+\alpha(p^2)^2+2\alpha(p)\alpha(p^2)\delta(p)}{p^2}.
\end{split}
\end{equation}

Recall that for a split prime $p$ we have $\delta(p^r)=1+r(p-1)/(p+1)$ and hence $\delta(p)=2p/(p+1)$ and $\delta(p^2)=2\delta(p)-1$. We also have $\alpha(p)=-2$ for all $p\leq X$, $\alpha(p^2)=1$ for $p\leq \sqrt{X}$ and $\alpha(p^2)=2$ for $\sqrt{X}<p\leq X$. A straightforward calculation now gives
\begin{equation}\label{s prod}
\begin{split}\prod_{\substack{p\leq X\\p\mathrm{\,\,split}}}G(p)=&\prod_{\substack{p\leq \sqrt{X}\\p\mathrm{\,\,split}}}\bigg(\frac{(1-1/p)^4}{1-1/p^2}\bigg)\prod_{\substack{\sqrt{X}<p\leq X\\p\mathrm{\,\,split}}}\bigg(\frac{(1-1/p)^4}{1-1/p^2}+O\left(\frac{1}{p^2}\right)\bigg)
\\=&\prod_{\substack{p\leq X\\p\mathrm{\,\,split}}}\bigg(\frac{(1-1/p)^4}{1-1/p^2}\bigg)\prod_{\substack{\sqrt{X}<p\leq X\\p\mathrm{\,\,split}}}\bigg(1+O\left(\frac{1}{p^2}\right)\bigg)
\\=&(1+o(1))\prod_{\substack{p\leq X\\p\mathrm{\,\,split}}}\bigg(1-\frac{1}{p}\bigg)^4\prod_{p\mathrm{\,\,split}}\bigg(1-\frac{1}{p^2}\bigg)^{-1}.
\end{split}
\end{equation}
In evaluating the remaining products in (\ref{S prod}) we note that $\alpha$ behaves the same on square inert primes as it does on ramified primes. The same goes for $\delta$ since the number 1 varies little. We describe the ramified case since the inert case is simply handled by replacing $p$ with $p^2$. 

For a ramified prime $p$ we have $\delta(p)=\delta(p^2)=1$, $\alpha(p)=-1$ for all $p\leq X$, $\alpha(p^2)=0$ for $p\leq\sqrt{X}$ and $\alpha(p^2)=1/2$ for $\sqrt{X}<p\leq X$. With this information we see
\begin{equation}\label{r prod}
\begin{split}\prod_{\substack{p\leq X\\p\mathrm{\,\,ramified}}}G(p)=&\prod_{\substack{p\leq \sqrt{X}\\p\mathrm{\,\,ramified}}}\bigg(1-\frac{1}{p}\bigg)\prod_{\substack{\sqrt{X}<p\leq X\\p\mathrm{\,\,ramified}}}\bigg(1-\frac{1}{p}+\frac{1}{4p^2}\bigg)
\\=&\prod_{\substack{p\leq X\\p\mathrm{\,\,ramified}}}\bigg(1-\frac{1}{p}\bigg)\prod_{\substack{\sqrt{X}<p\leq X\\p\mathrm{\,\,ramified}}}\bigg(1+O\left(\frac{1}{p^2}\right)\bigg)
\\=&(1+o(1))\prod_{\substack{p\leq X\\p\mathrm{\,\,ramified}}}\bigg(1-\frac{1}{p}\bigg).
\\=&(1+o(1))\prod_{\substack{p\leq X\\p\mathrm{\,\,ramified}}}\bigg(1-\frac{1}{p}\bigg)^2\prod_{p\mathrm{\,\,ramified}}\left(1+\frac{1}{p}\right)\left(1-\frac{1}{p^2}\right)^{-1}.
\end{split}
\end{equation}
 In the last line here we have used the fact that a prime is ramified if and only if it divides $d_\mathbb{K}$ and hence $\sum_{X<p|d_\mathbb{K}}1/p=o(1)$. Similarly, for inert primes we find 
\begin{equation}\label{i prod}
\begin{split}\prod_{\substack{p\leq \sqrt{X}\\p\mathrm{\,\,inert}}}G(p^2)
=&(1+o(1))\prod_{\substack{p\leq \sqrt{X}\\p\mathrm{\,\,inert}}}\bigg(1-\frac{1}{p^2}\bigg)
\\=&(1+o(1))\prod_{\substack{p\leq \sqrt{X}\\p\mathrm{\,\,inert}}}\bigg(1-\frac{1}{p^2}\bigg)^2 \prod_{\substack{p\mathrm{\,\,inert}}}\bigg(1-\frac{1}{p^2}\bigg)^{-1}
\end{split}
\end{equation}
Collecting the infinite products in (\ref{s prod}), (\ref{r prod}) and (\ref{i prod}) we acquire the factor 
\begin{equation}\frac{\pi^2}{6}\prod_{p|d_\mathbb{K}}\left(1+\frac{1}{p}\right).
\end{equation}
The remaining terms are then given by 
\begin{equation}(1+o(1))\prod_{\mathfrak{N}(\mathfrak{p})\leq X}\left(1-\frac{1}{\mathfrak{N}(\mathfrak{p})}\right)^2=(1+o(1))(L(1,\chi)e^\gamma\log X)^{-2}.
\end{equation}
\end{proof}

\subsection{Estimating the lower order terms}
By virtue of the upper bounds (\ref{c_1}), (\ref{c_0^'}) and Proposition \ref{main term prop} we are only required to evaluate the sum of the `big O' and $Z^\prime$ terms of formula (\ref{ded twist}). For the `big O' term we have
\begin{equation}
\begin{split}
&T^{-\frac{1}{4}+\epsilon}\sum_{\substack{h,k\leq T^{\theta}\\h,k\in\mathcal{W}(X)}}\frac{\alpha(h)\alpha(k)(h,k)}{hk}\left(\frac{hk}{(h,k)^2}\right)^{7/8+\epsilon}
\\\ll& T^{-\frac{1}{4}+\epsilon}\bigg(\sum_{n\leq T^{\theta}}{d(n)}\bigg)^2\ll T^{2\theta-\frac{1}{4}+\epsilon}
\end{split}
\end{equation} 
and so taking $\theta<1/12$ the error term is $o(1)$. 

We now estimate the sums involving the $Z^\prime$ terms. By (\ref{c_0}) and (\ref{Z^prime}) we see that we must consider sums of the form
\begin{equation}\begin{split}S^\prime:=&\sum_{\substack{h,k\leq Y\\h,k\in\mathcal{W}(X)}}\frac{\alpha(h)\alpha(k)}{hk}(h,k){\bf 1}_{q|h_k}\chi(k_h)\delta^\prime(h_k/q)\delta^\prime(k_h)
\\=&\sum_{\substack{g\leq Y\\g\in\mathcal{W}(X)}}\frac{1}{g}\sum_{\substack{k\leq Y/g\\k\in\mathcal{W}(X)}}\frac{\chi(k)\alpha(kg)\delta^\prime(k)}{k}\sum_{\substack{h\leq Y/g\\h\in\mathcal{W}(X)\\(h,k)=1}}{\bf 1}_{q|h}\frac{\alpha(hg)\delta^\prime(h/q)}{h}.
\end{split}
\end{equation}  
where $Y=T^\theta$.
The innermost sum is given by
\begin{equation}
\sum_{\substack{h\leq Y/qg\\qh\in\mathcal{W}(X)\\(qh,k)=1}}\frac{\alpha(qhg)\delta^\prime(h)}{qh}
\ll \sum_{\substack{h\leq Y/g\\h\in\mathcal{W}(X)\\(h,k)=1}}\frac{\alpha(hg)\delta^\prime(h)}{h}
\end{equation}
where we have used $|\alpha(qm)|\leq\alpha(m)$ which follows from  (\ref{q}) and the definition of $\alpha$. We deduce that $S^\prime$ is $\ll$ a sum of the form (\ref{S first}) with $\delta$ replaced by $\delta^\prime$. Using the bound $\delta^\prime(n)\leq d(n^2)$ we may follow the analysis of Proposition \ref{main term prop} to see that 
\begin{equation}S^\prime\ll(1+o(1))\prod_{\substack{p\leq X\\p\mathrm{\,\,split}}}G^\prime(p)\prod_{\substack{p\leq \sqrt{X}\\p\mathrm{\,\,inert}}}G^\prime(p^2)\prod_{\substack{p\leq X\\p\mathrm{\,\,ramified}}}G^\prime(p)
\end{equation}
where
\begin{equation}G^\prime(p)=1+\frac{2\alpha(p)\delta^\prime(p)+\alpha(p)^2}{p}+\frac{2\alpha(p^2)\delta^\prime(p^2)+\alpha(p^2)^2+2\alpha(p)\alpha(p^2)\delta^\prime(p)}{p^2}.
\end{equation}
For split and ramified primes we have $\delta^\prime(p^r)=\delta(p^r)$ and so we only need evaluate $G$ at the inert primes. For inert $p$ we have $\delta(p^2)=1+2(p+1)/(p-1)\leq 5$ and hence 
\begin{equation}G^\prime(p^2)=1+O\left(\frac{1}{p^2}\right)
\end{equation}
For the sake of argument we write
\begin{equation}\prod_{\substack{p\leq \sqrt{X}\\p\mathrm{\,\,inert}}}G^\prime(p^2)=(1+o(1))\prod_{\substack{p\leq \sqrt{X}\\p\mathrm{\,\,inert}}}\left(1-\frac{1}{p^2}\right)^{2}
\end{equation}
and then combine this with the products over split and ramified primes given by (\ref{s prod}) and (\ref{r prod}). This gives
\begin{equation}S^\prime\ll (\log X)^{-2}.
\end{equation}

\section{Conjecture \ref{quad conj} via the moments recipe}\label{recipe sec}

In this section we modify the recipe given in \cite{cfkrs} to reproduce Conjecture \ref{quad conj}. The recipe in question is concerned with primitive $L$-functions, so cannot be applied directly to our situation without some modification. Our modifications are based on Theorem \ref{twisted thm} and are in keeping with the reasoning of the original recipe. Let us first describe the process as it appears in \cite{cfkrs} with the Riemann zeta function as the example. 

Consider the shifted product 
\begin{equation}Z(s,\boldsymbol{\alpha})=\zeta(s+\alpha_1)\cdots\zeta(s+\alpha_k)\zeta(1-s-\alpha_{k+1})\cdots\zeta(1-s-\alpha_{2k})
\end{equation}
We first replace each occurrence of $\zeta$ with its approximate functional equation
\begin{equation}\zeta(s)=\sum_m\frac{1}{m^s}+\varkappa(s)\sum_n\frac{1}{n^{1-s}}
\end{equation}
and multiply out the the expression to give a sum of $2^{2k}$ terms. We throw away any terms that do not have an equal amount of $\varkappa(s+\alpha_i)$ and $\varkappa(1-s-\alpha_j)$ factors, the reason being that these terms are oscillatory. Indeed, by Stirling's formula we have
\begin{multline}\varkappa(s+\beta_1)\cdots\varkappa(s+\beta_J)\varkappa(1-s-\gamma_1)\cdots\varkappa(1-s-\gamma_K)
\\\sim\left(\frac{t}{2\pi e}\right)^{-i(J-K)}e^{i(J-K)\pi/4}\left(\frac{t}{2\pi}\right)^{-\sum \beta_j+\sum\gamma_k}
\end{multline}
which is oscillating unless $J=K$.
In each of the remaining $\binom{2k}{k}$ terms we retain only the diagonal from the sum, which we then extend over all positive integers.  If we denote the resulting expression by $M(s,\boldsymbol{\alpha})$ then the conjecture is 
\begin{equation}\int_{-\infty}^{\infty} Z\left(\frac{1}{2}+it,\boldsymbol{\alpha}\right)w(t)dt\sim\int_{-\infty}^{\infty}M\left(\frac{1}{2}+it,\boldsymbol{\alpha}\right) w(t)dt
\end{equation}   
for any reasonable function $w(t)$. 

To describe a typical term of $M(s,\boldsymbol{\alpha})$ let us first define the prototypical diagonal sum 
\begin{equation}R(\sigma,\alpha_1,\ldots,\alpha_{2k})=\sum_{m_1\cdots m_k=n_1\cdots n_k}\frac{1}{m_1^{\sigma+\alpha_1}\cdots m_k^{\sigma+\alpha_k}n_1^{1-\sigma-\alpha_{k+1}}\cdots n_1^{1-\sigma-\alpha_{2k}}}.
\end{equation}
This is in fact the term acquired by taking the first sum of the approximate functional equation in each $\zeta$-factor of $Z(s,\boldsymbol{\alpha})$. If, for example, we were to take the second sum in $\zeta(s+\alpha_1)$ and the second sum in $\zeta(1-s-\alpha_{k+1})$ whilst taking the first in the rest we would acquire the term
\begin{equation}\left(\frac{t}{2\pi}\right)^{-\alpha_1+\alpha_{k+1}}R(\sigma,\alpha_{k+1},\alpha_2,\ldots,\alpha_{k},\alpha_1,\alpha_{k+2},\ldots,\alpha_{2k}).
\end{equation}
It is then clear that the full expression will be a sum over permutations $\tau\in S_{2k}$, and that any permutation other than the identity will swap elements of $\{\alpha_1,\ldots,\alpha_k\}$ with elements of $\{\alpha_{k+1},\ldots,\alpha_{2k}\}$ in the $R$ terms. Since $R$ is symmetric in the first $k$ variables and also in the second, we may reorder the entries such that the subscripts of the first $k$ are in increasing order, as are the last $k$. We thus see that the full expression will be a sum over the $\binom{2k}{k}$ permutations $\tau\in S_{2k}$ such that 
\begin{equation}\tau(1)<\ldots<\tau(k),\,\,\,\tau(k+1)<\ldots<\tau(2k).
\end{equation}  
Denote the set of such permutations by $\Xi$. A typical term now takes the form 
\begin{equation}\left(\frac{t}{2\pi}\right)^{(-\alpha_1-\cdots-\alpha_k+\alpha_{k+1}+\cdots+\alpha_{2k})/2}W(s,\boldsymbol{\alpha},\tau)
\end{equation}
with $\tau\in\Xi$ and where
\begin{equation}
\begin{split}W(s,\boldsymbol{\alpha},\tau)=&\left(\frac{t}{2\pi}\right)^{(\alpha_{\tau(1)}+\cdots+\alpha_{\tau(k)}-\alpha_{\tau(k+1)}-\cdots-\alpha_{\tau(2k)})/2}
\\&{\times R(\sigma,\alpha_{\tau(1)},\ldots,\alpha_{\tau(k)},\alpha_{\tau(k+1)},\ldots,\alpha_{\tau(2k)})}.
\end{split}
\end{equation}
Combining all terms we have
\begin{equation}\label{M}M(s,\boldsymbol{\alpha})=\left(\frac{t}{2\pi}\right)^{(-\alpha_1-\cdots-\alpha_k+\alpha_{k+1}+\cdots+\alpha_{2k})/2}\sum_{\tau\in\Xi}W(s,\boldsymbol{\alpha},\tau).
\end{equation}
To recover the $k$th-moment conjecture for the Riemann zeta function we first extract the polar behaviour of $R$. This gives 
\begin{equation}\label{R prod}R(\sigma,\alpha_1,\ldots,\alpha_{2k})=A_k(\sigma,\alpha_1,\ldots,\alpha_{2k})\prod_{i,j=1}^k\zeta(1+\alpha_i-\alpha_{k+j})
\end{equation}
where $A_k$ is some Euler product that is absolutely convergent for $\sigma>1/4$. Now, in \cite{cfkrs} it is shown (Lemma 2.5.1) that the sum over permutations in (\ref{M}) can be written as a contour integral. We reproduce this here as we shall have use for it later. 

\begin{lem}[\cite{cfkrs}]\label{2.5.1}Suppose $F(a;b)=F(a_1,\ldots,a_k;b_1,\ldots,b_k)$ is a function of $2k$ variables which is symmetric with respect to the first $k$ and also symmetric with respect to the second set of $k$ variables. Suppose also that $F$ is regular near $(0,\ldots,0)$. Suppose further that $f(s)$ has a simple pole of residue 1 at $s=0$ but is otherwise analytic in a neighbourhood about $s=0$. Let
\begin{equation}K(a_1,\ldots,a_k;b_1,\ldots,b_k)=F(a_1,\ldots,a_k;b_1,\ldots,b_k)\prod_{i,j=1}^k f(a_i-b_j).
\end{equation}
If for all $1\leq i,j\leq k$, $\alpha_i-\alpha_{k+j}$ is contained in the region of analyticity of $f(s)$ then 
\begin{equation}
\begin{split}&\sum_{\tau\in\Xi}K(\alpha_{\tau(1)},\ldots,\alpha_{\tau(k)};\alpha_{\tau(k+1)},\ldots,\alpha_{\tau(2k)})
\\=&\frac{(-1)^k}{k!^2(2\pi i)^{2k}}\oint\cdots\oint K(z_1,\ldots,z_k,z_{k+1},\ldots,z_{2k})
\frac{\Delta^2(z_1,\ldots,z_{2k})}{\prod_{i,j=1}^{k}(z_i-\alpha_j)}dz_1\cdots dz_{2k}
\end{split}
\end{equation}
\end{lem}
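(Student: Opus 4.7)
The plan is to evaluate the contour integral on the right-hand side directly by the residue theorem. First I would fix each contour of integration to be a small positively-oriented circle enclosing all the points $\alpha_1,\ldots,\alpha_{2k}$ (assuming as written that the denominator is $\prod_{i,j=1}^{2k}(z_i-\alpha_j)$, presumably the intended product). The factors $f(z_i - z_{k+j})$ inside $K$ produce simple poles on the diagonals $z_i = z_{k+j}$, but $\Delta^2(z_1,\ldots,z_{2k})$ carries $(z_i-z_{k+j})^2$ as a factor and vanishes to order two there, so the integrand is actually regular along those diagonals. Hence the only singularities inside the contours are the simple poles coming from the denominator, and one may shrink the contours to avoid any other features of $K$.

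Next I would compute the iterated residue. Each variable $z_i$ picks up a point $\alpha_{j_i}$, and whenever two indices $i \neq i'$ select the same $\alpha$ the factor $\Delta^2$ vanishes to order two against only simple poles in $z_i$ and $z_{i'}$, killing that residue. So only configurations indexed by a permutation $\sigma \in S_{2k}$, with $z_i = \alpha_{\sigma(i)}$, survive. Each such configuration contributes
\[
K(\alpha_{\sigma(1)},\ldots,\alpha_{\sigma(k)};\alpha_{\sigma(k+1)},\ldots,\alpha_{\sigma(2k)})\,\Delta^2(\alpha_{\sigma(1)},\ldots,\alpha_{\sigma(2k)})\prod_{\ell=1}^{2k}\frac{1}{\prod_{j\neq \ell}(\alpha_\ell-\alpha_j)}.
\]
A short identity gives $\prod_\ell\prod_{j\neq\ell}(\alpha_\ell-\alpha_j)=(-1)^{\binom{2k}{2}}\Delta(\alpha_1,\ldots,\alpha_{2k})^2$, and since $\Delta^2$ is symmetric the two Vandermonde-squared factors cancel. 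Combining with the prefactor $(-1)^k/(k!^2(2\pi i)^{2k})$ and the $(2\pi i)^{2k}$ from the residues, the sign identity $\binom{2k}{2}=k(2k-1)\equiv k\pmod 2$ makes the overall sign $+1$, yielding
\[
\text{RHS}\;=\;\frac{1}{k!^2}\sum_{\sigma\in S_{2k}} K(\alpha_{\sigma(1)},\ldots,\alpha_{\sigma(k)};\alpha_{\sigma(k+1)},\ldots,\alpha_{\sigma(2k)}).
\]

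Finally I would invoke the $S_k\times S_k$ symmetry of $K$. The coset space $S_{2k}/(S_k\times S_k)$ is in bijection with $\Xi$; each coset contains $k!^2$ permutations, all of which yield the same value of $K$. Hence the $S_{2k}$ sum equals $k!^2\sum_{\tau\in\Xi}K(\alpha_{\tau(1)},\ldots,\alpha_{\tau(2k)})$, and the lemma follows.

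The most delicate point is the sign tracking: one must line up the explicit $(-1)^k$ from the statement against the $(-1)^{\binom{2k}{2}}$ produced by the residue product. The other mild subtlety is verifying that the diagonal singularities $z_i=z_{k+j}$ are genuinely removable, which is where the double zero in $\Delta^2$ is used in an essential way; without the square in $\Delta^2$ the integrand would pick up additional residues and the identity would fail.
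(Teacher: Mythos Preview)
The paper does not supply its own proof of this lemma; it is quoted verbatim from \cite{cfkrs} (Lemma~2.5.1 there) and used as a black box. Your proposal is correct and is essentially the argument given in \cite{cfkrs}: compute the iterated residues, observe that the double zeros of $\Delta^2$ cancel the simple poles coming from $f$ along the diagonals $z_i=z_{k+j}$ and also kill any configuration where two variables land on the same $\alpha_m$, reduce to a sum over $S_{2k}$, and then collapse to $\Xi$ via the $S_k\times S_k$ symmetry of $K$. Your sign bookkeeping with $\binom{2k}{2}\equiv k\pmod 2$ is exactly what is needed, and you are right that the printed denominator $\prod_{i,j=1}^{k}(z_i-\alpha_j)$ is a typo for $\prod_{i=1}^{2k}\prod_{j=1}^{2k}(z_i-\alpha_j)$.

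One small sharpening of your exposition: in the case where $z_i$ (with $i\le k$) and $z_{k+j}$ both try to land on the same $\alpha_m$, you phrase it as ``killing that residue'', but what actually happens is slightly different from the same-block case. Here there is an extra simple pole from $f(z_i-z_{k+j})$ in addition to the two simple poles from the denominator, so the order-two zero from $\Delta^2$ does not leave a zero; instead it renders the integrand \emph{regular} at $z_{k+j}=\alpha_m$ after the first residue, so that point simply ceases to be a pole in the second variable. The conclusion is the same (no contribution), but it is worth distinguishing the mechanism.
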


By the above Lemma and (\ref{M}), (\ref{R prod}) we see that $M(1/2+it,\boldsymbol{0})$ is given by
\begin{equation}
\begin{split}&\frac{(-1)^k}{k!^2(2\pi i)^{2k}}\oint\cdots\oint A_k(1/2,z_1,\ldots,z_{2k})\prod_{i,j=1}^k\zeta(1+z_i-z_{k+j})
\\&\times\frac{\Delta^2(z_1,\ldots,z_{2k})}{\prod_{j=1}^{2k}z_j^{2k}}\exp\bigg(\frac{1}{2}\log(t/2\pi)\sum_{j=1}^kz_j-z_{k+j}\bigg)dz_1\cdots dz_{2k}
\\=&A_k(1/2,0,\ldots,0)\log^{k^2}\left(\frac{t}{2\pi}\right)(1+O((\log t)^{-1}))\frac{(-1)^k}{2^{k^2}k!^2(2\pi i)^{2k}}
\\&\times\oint\cdots\oint \frac{\Delta^2(z_1,\ldots,z_{2k})}{\left(\prod_{i,j=1}(z_i-z_{k+j})\right)\prod_{j=1}^{2k}z_j^{2k}}e^{\sum_{j=1}^kz_j-z_{k+j}}dz_1\cdots dz_{2k}
\end{split}
\end{equation} 
after a change of variables. It can then be shown that $A_k(1/2,0,\ldots,0)=a(k)$, where $a(k)$ is given by (\ref{zeta arith}), and that the remaining terms give $G(k+1)^2/G(2k+1)$. 

We now turn our attention to the shifted product
\begin{equation}
\begin{split}Z(s,\boldsymbol{\alpha},\boldsymbol{\beta})=Z_\zeta(s,\boldsymbol{\alpha})Z_L(s,\boldsymbol{\beta})
 \end{split}
\end{equation}
where
 \begin{equation}Z_\zeta(s,\boldsymbol{\alpha})=\zeta(s+\alpha_1)\cdots\zeta(s+\alpha_k)\zeta(1-s-\alpha_{k+1})\cdots\zeta(1-s-\alpha_{2k})
\end{equation}
and
\begin{equation} Z_L(s,\boldsymbol{\beta})=L(s+\beta_1,\chi)\cdots L(s+\beta_k,\chi)L(1-s-\beta_{k+1},\overline{\chi})\cdots L(1-s-\beta_{2k},\overline{\chi}).
\end{equation}
As before, we plan to substitute the respective approximate functional equations, which we now write as
\begin{equation}\zeta(s)=\sum_m\frac{1}{m^s}+\varkappa_\zeta(s)\sum_n\frac{1}{n^{1-s}},
\end{equation}
\begin{equation}L(s,\chi)=\sum_m\frac{\chi(m)}{m^s}+\varkappa_L(s)\sum_n\frac{\overline{\chi}(n)}{n^{1-s}}.
\end{equation}
We have
\begin{equation}\varkappa_L(s)=\frac{G(\chi)}{i^\mathfrak{a}\sqrt{q}}\left(\frac{\pi}{q}\right)^{-\frac{1}{2}+s}\frac{\Gamma((1-s+\mathfrak{a})/2)}{\Gamma((s+\mathfrak{a})/2)}
\end{equation}
where $G(\chi)$ is the Gauss sum of $\chi$ and $\mathfrak{a}$ is defined by the equation $\chi(-1)=(-1)^\mathfrak{a}$.
An exercise with Stirling's formula gives
\begin{equation}\varkappa_L(s)=\frac{G(\chi)}{i^\mathfrak{a}\sqrt{q}}\left(\frac{qt}{2\pi}\right)^{\frac{1}{2}-s}e^{it+i\pi/4}\left(1+O\left(\frac{1}{t}\right)\right).
\end{equation}
If we now follow the recipe and treat the $L$-functions as if they were zeta functions, then after expanding  and throwing away the terms with an unequal amount of $\varkappa_{\zeta,L}(s+\gamma_i)$ and $\varkappa_{\zeta,L}(1-s-\delta_j)$ factors, we are still left with some terms that have a factor of $q^{-it}$. Since this is oscillating we modify the recipe to throw these terms away also. We note with this modification the recipe reproduces Theorem \ref{twisted thm}, which adds some justification. 

One way of arriving at the resultant expression is is to apply the first step of the recipe to $Z_\zeta(s,\boldsymbol{\alpha})$ and $Z_L(s,\boldsymbol{\beta})$ separately. This prevents the occurrence of the aforementioned terms. Here, we note that when applying this step to $Z_L$ one can use the fact that $G(\chi)G(\overline{\chi})=(-1)^\mathfrak{a}q$ to provide some cancellation. We now form the product to gain a sum of $\binom{2k}{k}^2$ terms and retain only the diagonals as before. Extending the sums over all positive integers we then denote the resulting expression by $M(s,\boldsymbol{\alpha},\boldsymbol{\beta})$ and conjecture that  
\begin{equation}\int_{-\infty}^{\infty} Z\left(\frac{1}{2}+it,\boldsymbol{\alpha},\boldsymbol{\beta}\right)w(t)dt\sim\int_{-\infty}^{\infty}M\left(\frac{1}{2}+it,\boldsymbol{\alpha},\boldsymbol{\beta}\right) w(t)dt.
\end{equation} 

Applying the modified recipe and using a similar reasoning given in the case of the Riemann zeta function above, we see that
\begin{equation}
\begin{split}\label{new M}M(s,\boldsymbol{\alpha},\boldsymbol{\beta})=&\left(\frac{t}{2\pi}\right)^{(-\alpha_1-\cdots-\alpha_k+\alpha_{k+1}+\cdots+\alpha_{2k})/2}\left(\frac{qt}{2\pi}\right)^{(-\beta_1-\cdots-\beta_k+\beta_{k+1}+\cdots+\beta_{2k})/2}
\\&\times\sum_{\tau,\tau^\prime\in\Xi}W(s,\boldsymbol{\alpha},\boldsymbol{\beta},\tau,\tau^\prime)
\end{split}
\end{equation} 
where
\begin{equation}
\begin{split}
W(s,\boldsymbol{\alpha},\boldsymbol{\beta},\tau,\tau^\prime)=&\left(\frac{t}{2\pi}\right)^{(\alpha_{\tau(1)}+\cdots+\alpha_{\tau(k)}-\alpha_{\tau(k+1)}-\cdots-\alpha_{\tau(2k)})/2}
\\&\times \left(\frac{qt}{2\pi}\right)^{(\beta_{\tau^\prime(1)}+\cdots+\beta_{\tau^\prime(k)}-\beta_{\tau^\prime(k+1)}-\cdots-\beta_{\tau^\prime(2k)})/2}
\\&{\times S(\sigma;\alpha_{\tau(1)},\ldots,\alpha_{\tau(2k)};\beta_{\tau^\prime(1)},\ldots,\beta_{\tau^\prime(2k)})}
\end{split}
\end{equation}
with
\begin{multline}S(\sigma;\alpha_1,\ldots,\alpha_{2k};\beta_1,\ldots,\beta_{2k})
\\=\sum_{\substack{m_1\cdots m_km^\prime_1\cdots m^\prime_k=\\n_1\cdots n_kn^\prime_1\cdots n^\prime_k}}{\chi(m_1^\prime)\cdots\chi(m_k^\prime)\overline{\chi(n_1^\prime)\cdots\chi(n_k^\prime)}}\bigg[m_1^{\sigma+\alpha_1}\cdots m_k^{\sigma+\alpha_k}
\\\times {m^\prime_1}^{\sigma+\beta_1}\cdots {m^\prime_k}^{\sigma+\beta_k}
n_1^{1-\sigma-\alpha_{k+1}}\cdots n_k^{1-\sigma-\alpha_{2k}}{n^\prime_1}^{1-\sigma-\beta_{k+1}}\cdots {n^\prime_k}^{1-\sigma-\beta_{2k}}\bigg]^{-1}.
\end{multline}
Since the condition $m_1\cdots m_km^\prime_1\cdots m^\prime_k=n_1\cdots n_kn^\prime_1\cdots n^\prime_k$ is multiplicative we have 
\begin{align*}
&S(\sigma;\alpha_1,\ldots,\alpha_{2k};\beta_1,\ldots,\beta_{2k})
\\=&{\prod_p\sum_{\substack{\sum_{j=1}^k e_j+e^\prime_j=\\\sum_{j=1}^k e_{j+k}+e^\prime_{j+k}\\}}{\chi(p^{e_1^\prime})\cdots\chi(p^{e_k^\prime})\overline{\chi(p^{e_{k+1}^\prime})\cdots\chi(p^{e_{2k}^\prime})}}\bigg[p^{e_1({\sigma+\alpha_1})}\cdots p^{e_k({\sigma+\alpha_k})}}
\\&\times { p^{e^\prime_1({\sigma+\beta_1})}\cdots p^{e^\prime_k({\sigma+\beta_k})}
p^{e_{k+1}({1-\sigma-\alpha_{k+1}})}\cdots p^{e_{2k}({1-\sigma-\alpha_{2k}})}}
\\&\times p^{e^\prime_{k+1}({1-\sigma-\beta_{k+1}})}\cdots p^{e_{2k}^\prime({1-\sigma-\beta_{2k}})}\bigg]^{-1}
\displaybreak\\=& A_k(\sigma,\boldsymbol{\alpha},\boldsymbol{\beta})\prod_{i,j=1}^k\zeta(1+\alpha_i-\alpha_{k+j})L(1+\beta_i-\beta_{k+j},|\chi|^2)
\\&\times L(1+\beta_i-\alpha_{j+k},\chi)L(1+\alpha_i-\beta_{j+k},\overline{\chi})
\\=& A_k(\sigma,\boldsymbol{\alpha},\boldsymbol{\beta})\prod_{i,j=1}^k\zeta(1+\alpha_i-\alpha_{k+j})\zeta(1+\beta_i-\beta_{k+j})
\\&\times L(1+\beta_i-\alpha_{j+k},\chi)L(1+\alpha_i-\beta_{j+k},\overline{\chi})\bigg(\prod_{p|q}\left(1-p^{-1-\beta_i+\beta_{k+j}}\right)\bigg)
\end{align*}
where $A_k$ is an Euler product that is absolutely convergent for $\sigma>1/4$. For $\sigma=1/2$ we have the following explicit expression for $A_k$:
\begin{equation}
\begin{split} A_k(1/2,\boldsymbol{\alpha},\boldsymbol{\beta})=&\prod_p\prod_{i,j=1}^k(1-p^{-1-\alpha_i+\alpha_{j+k}})
(1-|\chi(p)|^2p^{-1-\beta_i+\beta_{j+k}})
\\&\times (1-\chi(p)p^{-1-\beta_i+\alpha_{j+k}})(1-\overline{\chi}(p)p^{-1-\alpha_i+\beta_{j+k}})B_p(\boldsymbol{\alpha},\boldsymbol{\beta})
\end{split}
\end{equation}
where 
\begin{equation}
\begin{split}
B_p(\boldsymbol{\alpha},\boldsymbol{\beta})=&\sum_{\substack{\sum_{j=1}^k e_j+e^\prime_j=\\\sum_{j=1}^k e_{j+k}+e^\prime_{j+k}}}\frac{\chi(p^{e_1^\prime})\cdots\overline{\chi(p^{e_{2k}^\prime})}}{p^{e_1({1/2+\alpha_1})}\cdots p^{e_{2k}^\prime({1/2-\beta_{2k}})}}
\\=&\int_0^1\sum_{\substack{e_1,\ldots,e_{2k}\\e^\prime_1,\ldots,e^\prime_{2k}}}\frac{\chi(p^{e_1^\prime})\cdots\overline{\chi(p^{e_{2k}^\prime})}}{p^{e_1({1/2+\alpha_1})}\cdots p^{e_{2k}^\prime({1/2-\beta_{2k}})}}
\\&\times e\left(\left(\sum_{j=1}^k e_j+e^\prime_j-\sum_{j=1}^k e_{j+k}+e^\prime_{j+k}\right)\theta\right)d\theta
\\=&\int_0^1\prod_{j=1}^k\sum_{e_j=0}^\infty\frac{1}{p^{e_j(1/2+\alpha_j)}}e(e_j\theta)\prod_{j=1}^k\sum_{e_{j+k}=0}^\infty\frac{1}{p^{e_{j+k}(1/2-\alpha_{j+k})}}e(-e_{j+k}\theta)
\\&\times\prod_{j=1}^k\sum_{e^\prime_j=0}^\infty\frac{\chi(p^{e^\prime_j})}{p^{e^\prime_j(1/2+\beta_j)}}e(e^\prime_j\theta)\prod_{j=1}^k\sum_{e^\prime_{j+k}=0}^\infty\frac{\overline{\chi}(p^{e_{j+k}^\prime})}{p^{e^\prime_{j+k}(1/2-\beta_{j+k})}}e(-e^\prime_{j+k}\theta)d\theta
\\=&\int_0^1 \prod_{j=1}^k\zeta_p\left(\frac{e(\theta)}{p^{1/2+\alpha_j}}\right)\zeta_p\left(\frac{e(-\theta)}{p^{1/2\alpha_{k+j}}}\right)L_p\left(\frac{e(\theta)}{p^{1/2+\beta_j}}\right)\overline{L}_p\left(\frac{e(-\theta)}{p^{1/2-\beta_{j+k}}}\right)d\theta
\end{split}
\end{equation}
with $\zeta_p(x)=(1-x)^{-1}$, $L_p(x)=(1-\chi(p)x)^{-1}$ and $\overline{L}_p(x)=(1-\overline{\chi}(p)x)^{-1}$.

Now, denote the holomorphic part of $S(1/2,\boldsymbol{\alpha},\boldsymbol{\beta})$ by
\begin{equation}
\begin{split}A^\prime_k(1/2,\boldsymbol{\alpha},\boldsymbol{\beta})=&A_k(1/2,\boldsymbol{\alpha},\boldsymbol{\beta})\prod_{i,j=1}^k L(1+\beta_i-\alpha_{j+k},\chi)L(1+\alpha_i-\beta_{j+k},\overline{\chi})
\\&\times\bigg(\prod_{p|q}\left(1-p^{-1-\beta_i+\beta_{k+j}}\right)\bigg).
\end{split}
\end{equation}
Applying Lemma \ref{2.5.1} twice to (\ref{new M}) we see that  $M(1/2+it,\boldsymbol{0},\boldsymbol{0})$ is given by
\begin{equation}
\begin{split}&\left(\frac{(-1)^k}{k!^2(2\pi i)^{2k}}\right)^2\oint\cdots\oint A^\prime_k(1/2,u_1,\ldots,u_{2k},v_1,\ldots,v_{2k})
\\&\times\prod_{i,j=1}^k\zeta(1+u_i-u_{k+j})\zeta(1+v_i-v_{k+j})\frac{\Delta^2(u_1,\ldots,u_{2k})}{\prod_{j=1}^{2k}u_j^{2k}}\frac{\Delta^2(v_1,\ldots,v_{2k})}{\prod_{j=1}^{2k}v_j^{2k}}
\\&\times e^{\frac{1}{2}\mathcal{L}\sum_{j=1}^ku_j-u_{k+j}}e^{\frac{1}{2}\mathcal{L}_q\sum_{j=1}^kv_j-v_{k+j}}du_1\cdots du_{2k}dv_1\cdots dv_{2k}
\end{split}
\end{equation}
where $\mathcal{L}=\log (t/2\pi)$ and $\mathcal{L}_q=\log(qt/2\pi)$.
Since $A_k(1/2,\boldsymbol{\alpha},\boldsymbol{\beta})$ is holomorphic in the neighbourhood of $(\boldsymbol{\alpha},\boldsymbol{\beta})=(\boldsymbol{0},\boldsymbol{0})$ after a change of variables this becomes 
\begin{equation}
\begin{split}\label{contour int}&\left(\frac{(-1)^k}{k!^2(2\pi i)^{2k}}\right)^2\oint\cdots\oint A^\prime_k\left(1/2,\frac{u_1}{\mathcal{L}/2},\ldots,\frac{u_{2k}}{\mathcal{L}/2},\frac{v_1}{\mathcal{L}_q/2},\ldots,\frac{v_{2k}}{\mathcal{L}_q/2}\right)
\\&\times\prod_{i,j=1}^k\zeta\left(1+\frac{u_i-u_{k+j}}{\mathcal{L}/2}\right)\zeta\left(1+\frac{v_i-v_{k+j}}{\mathcal{L}_q/2}\right)\frac{\Delta^2(u_1,\ldots,u_{2k})}{\prod_{j=1}^{2k}u_j^{2k}}
\\&\times \frac{\Delta^2(v_1,\ldots,v_{2k})}{\prod_{j=1}^{2k}v_j^{2k}}e^{\sum_{j=1}^ku_j-u_{k+j}}e^{\sum_{j=1}^kv_j-v_{k+j}}du_1\cdots du_{2k}dv_1\cdots dv_{2k}.
\\=&A^\prime_k\left(1/2,\boldsymbol{0},\boldsymbol{0}\right)\mathcal{L}^{k^2}\mathcal{L}_q^{k^2}\left(1+O\left(\frac{1}{\mathcal{L}}\right)\right)\left(\frac{(-1)^k}{2^{k^2}k!^2(2\pi i)^{2k}}\right)^2\oint\cdots\oint 
\\&\times\frac{\Delta^2(u_1,\ldots,u_{2k})}{\prod_{i,j=1}^k(u_i-u_{k+j})\prod_{j=1}^{2k}u_j^{2k}}\frac{\Delta^2(v_1,\ldots,v_{2k})}{\prod_{i,j=1}^k(v_i-v_{k+j})\prod_{j=1}^{2k}v_j^{2k}}
\\&\times e^{\sum_{j=1}^ku_j-u_{k+j}}e^{\sum_{j=1}^kv_j-v_{k+j}}du_1\cdots du_{2k}dv_1\cdots dv_{2k}.
\\\sim&A^\prime_k\left(1/2,\boldsymbol{0},\boldsymbol{0}\right)\mathcal{L}^{k^2}\mathcal{L}_q^{k^2}\Bigg(\frac{(-1)^k}{2^{k^2}k!^2(2\pi i)^{2k}}\oint\cdots\oint 
\\&\times\frac{\Delta^2(u_1,\ldots,u_{2k})}{\prod_{i,j=1}^k(u_i-u_{k+j})\prod_{j=1}^{2k}u_j^{2k}}
e^{\sum_{j=1}^ku_j-u_{k+j}}du_1\cdots du_{2k}\Bigg)^2.
\end{split}
\end{equation}
As previously mentioned, it is shown in \cite{cfkrs} that the quantity in parentheses is given by $G(k+1)^2/G(2k+1)$ and so it only remains to show that $A^\prime_k\left(1/2,\boldsymbol{0},\boldsymbol{0}\right)=a(k)L(1,\chi)^{2k^2}$ where $a(k)$ is given by (\ref{arith factor}). Since, 
\begin{equation}A^\prime_k\left(1/2,\boldsymbol{0},\boldsymbol{0}\right)=A_k\left(1/2,\boldsymbol{0},\boldsymbol{0}\right)L(1,\chi)^{2k^2}\prod_{p|q}(1-p^{-1})^{k^2}
\end{equation}
we only need show that $a(k)$ is given by the quantity
\begin{equation}
\begin{split}&\prod_p\big[(1-p^{-1})
(1-|\chi(p)|^2p^{-1}) (1-\chi(p)p^{-1})(1-\overline{\chi}(p)p^{-1})\big]^{k^2}
\\&\times B_p(\boldsymbol{0},\boldsymbol{0})\prod_{p|q}(1-p^{-1})^{k^2}
\\=&\prod_p\big[(1-p^{-1})
 (1-\chi(p)p^{-1})\big]^{2k^2}
B_p(\boldsymbol{0},\boldsymbol{0})
\\=&b(k),
\end{split}
\end{equation}
say. In the case of quadratic extensions, $a(k)$ is the product of the following three factors
\begin{align}\label{prod over split}\prod_{\substack{p\mathrm{\,\,split}}}\left(1-\frac{1}{p}\right)^{4k^2}&\sum_{m=1}^\infty \frac{d_{2k}(p^m)^2}{p^m},
\\\label{prod over inert}\prod_{\substack{p\mathrm{\,\,inert}}}\left(1-\frac{1}{p^2}\right)^{2k^2}&\sum_{m=1}^\infty \frac{d_{k}(p^m)^2}{p^{2m}},
\\\label{prod over ram}\prod_{\substack{p\mathrm{\,\,ramified}}}\left(1-\frac{1}{p}\right)^{2k^2}&\sum_{m=1}^\infty \frac{d_{k}(p^m)^2}{p^m}.
\end{align}
Now, since $\chi(p)=1$ for split primes, the relevant factor in $b(k)$ is given by
\begin{equation}
\begin{split}\prod_{\substack{p\mathrm{\,\,split}}}\left(1-\frac{1}{p}\right)^{4k^2}\int_0^1 \left(1-\frac{e(\theta)}{p^{1/2}}\right)^{-2k}\left(1-\frac{e(-\theta)}{p^{1/2}}\right)^{-2k}d\theta.
\end{split}
\end{equation}
Since $k$ is an integer we can expand the integrand into a double series. Upon integration this is easily seen to be equal to the sum in (\ref{prod over split}) after using
\begin{equation}\binom{m+2k-1}{2k-1}=\binom{m+2k-1}{m}=d_{2k}(p^m).
\end{equation}
For inert primes we have $\chi(p)=-1$ and so the relevant factor is 
\begin{equation}
\begin{split}\prod_{\substack{p\mathrm{\,\,inert}}}\left(1-\frac{1}{p^2}\right)^{2k^2}\int_0^1 \left(1-\frac{e(2\theta)}{p}\right)^{-k}\left(1-\frac{e(-2\theta)}{p}\right)^{-k}d\theta,
\end{split}
\end{equation}
which is again easily seen to be equal to (\ref{prod over inert}). Finally, for ramified primes, or equivalently the primes dividing $q$, we have $\chi(p)=0$. Therefore, this factor in $b(k)$ is given by
\begin{equation}
\begin{split}\prod_{\substack{p\mathrm{\,\,ramified}}}\left(1-\frac{1}{p}\right)^{2k^2}\int_0^1 \left(1-\frac{e(\theta)}{p^{1/2}}\right)^{-k}\left(1-\frac{e(-\theta)}{p^{1/2}}\right)^{-k}d\theta
\end{split}
\end{equation}
which equals (\ref{prod over ram}).

\section{Moments of general non-primitive $L$-functions}\label{non-prim sec}

A key point in both derivations of Conjecture \ref{quad conj} was that, aside from the arithmetic factor, the leading term in the moment of $\zeta(1/2+it) L(1/2+it,\chi)$ was given by the product of the leading terms of the moments of $\zeta(1/2+it)$ and $L(1/2+it,\chi)$. We believe this should be the case for general non-primitive $L$-functions too. Indeed, by applying our modified moments recipe to non-primitive $L$-functions this idea becomes more apparent. 

The recipe for general non-primitive $L$-functions goes as follows. Suppose we have the product
$L(s)=\prod_{j=1}^m L_j(s)^{e_j}$ where the $L_j(s)$ are distinct, primitive members of the Selberg class $\mathcal{S}$. Suppose for each $L_j(s)$ we have the functional equation
\begin{equation}\xi_{L_j}(s)=\gamma_{L_j}(s)L_j(s)=\epsilon\overline{\xi}_{L_j}(1-s)
\end{equation}
where
\begin{equation}\gamma_{L_j}(s)=Q_j^{s/2}\prod_{i=1}^{d_j}\Gamma(s/2+\mu_{i,j})
\end{equation}
with the $\{\mu_{i,j}\}$ stable under complex conjugation. 
We then have the approximate functional equations
\begin{equation}L_j(s)=\sum_n \frac{\alpha_{L_j}(m)}{m^s}+\varkappa_{L_j}(s)\sum_n  \frac{\overline{\alpha_{L_j}(n)}}{n^s}
\end{equation}
where 
\begin{equation}\varkappa_{L_j}(s)=\frac{\overline{\gamma_{L_j}(1-s)}}{\gamma_{L_j}(s)}=Q_j^{1/2-s}\prod_{i=1}^{d_j}\frac{\Gamma((1-s)/2+\overline{\mu_{i,j}})}{\Gamma(s/2+\mu_{i,j})}.
\end{equation}
Similarly to before, if we apply the original recipe we encounter terms of the form $(Q_jQ_{j^\prime})^{-it}$ which are oscillating. We can prevent the occurrence of these terms by applying the first step of the recipe to each $L_j(s)$ separately. We then continue as in the original recipe.  It should be clear that when the resulting expression is written as a contour integral, the same manipulations used on (\ref{contour int}) will allow for a factorisation of the main term. 

In terms of the random matrix theory, let us assume that we have a hybrid product for $L(s)$. Since the $L_j(s)$ are distinct their zeros are uncorrelated \cite{liu ye}, and so their associated matrices should act independently. Hence, when the moment of the product over zeros is considered as an expectation, it will factorise. 

As we have already seen, this phenomenon occurs when considering $\zeta(s)L(s,\chi)$. Let us restate the conjecture in the more descriptive form 
\begin{multline}\frac{1}{T}\int_0^T\left|\zeta\left(\frac{1}{2}+it\right)^kL\left(\frac{1}{2}+it,\chi\right)^k\right|^{2}dt
\\\sim a(k)\frac{G(k+1)^2}{G(2k+1)}\log^{k^2}T\cdot\frac{G(k+1)^2}{G(2k+1)}\log^{k^2}qT,
\end{multline}  
with
\begin{equation}a(k)=\prod_p \left(1-\frac{1}{p}\right)^{2k^2}\sum_{m\geq 0}\frac{|F_{\chi,k}(p^m)|^2}{p^m},
\end{equation}
\begin{equation}F_{\chi,k}(n)=\sum_{n_1n_2=n}d_k(n_1)d_k(n_2)\chi(n_2).
\end{equation}
 The coefficients $F_{\chi,k}(n)$ are, of course, the Dirichlet coefficients of $\zeta(s)^kL(s,\chi)^k$.

As another example, we state a result to appear in a forthcoming joint paper between the author and Caroline Turnage-Butterbaugh. 
Here it is established, by an application of Theorem \ref{twisted thm}, that 
\begin{multline}\frac{1}{T}\int_0^T\left|\zeta\left(\frac{1}{2}+it\right)L\left(\frac{1}{2}+it,\chi\right)\sum_{n\leq T^\theta}\frac{1}{n^{1/2+it}}\right|^2dt
\\\sim b(1)\log^4 T\cdot\log qT \left(\frac{4\theta^3-3\theta^4}{12}\right)
\end{multline}  
where 
\begin{equation}b(1)=\prod_p \left(1-\frac{1}{p}\right)^{5}\sum_{m\geq 0}\frac{|H_\chi(p^m)|^2}{p^m},\,\,\,\,\,\,\,\,\,\,\,H_\chi(n)=\sum_{n_1n_2=n}d(n_1)\chi(n_2),
\end{equation}
and $\theta<1/11-\epsilon$. It is expected that Theorem \ref{twisted thm} remains valid for  $\theta=1$, in which case the above relation reads as 
\begin{multline}
\frac{1}{T}\int_0^T\left|\zeta\left(\frac{1}{2}+it\right)^2L\left(\frac{1}{2}+it,\chi\right)\right|^2dt
\\\sim \frac{b(1)}{12}\log^4 T\cdot\log qT
=b(1)\cdot\frac{G(3)^2}{G(5)}\log^4 T\cdot \frac{G(2)^2}{G(3)}\log qT.
\end{multline}
In terms of the $T$ behaviour, this can be thought of as the product of the fourth moment of zeta times the second moment of $L$.
Again, this is consistent with our random matrix theory/moments recipe reasoning.
Guided by these examples we are led to Conjecture \ref{non-prim conj} which, after ignoring the conductors, we restate as 
\begin{equation}\frac{1}{T}\int_0^T\left|L\left(\frac{1}{2}+it\right)\right|^{2k}dt\sim \frac{a_L(k)g_L(k)}{\Gamma(n_Lk^2+1)}\log^{n_Lk^2}T
\end{equation}
where $n_L=\sum_{j=1}^m e^2_j$,
\begin{equation}g_L(k)=\Gamma(n_Lk^2+1)\prod_{j=1}^m\frac{G^2(e_jk+1)}{G(2e_jk+1)}d_j^{(e_jk)^2},
\end{equation}
and
\begin{equation}a_L(k)=\prod_p \left(1-\frac{1}{p}\right)^{n_Lk^2}\sum_{n=0}^\infty \frac{|\alpha_{L,k}(p^n)|^2}{p^n}.
\end{equation}

Let us cast this in the light of some of the Selberg's conjectures. First,
we note that the integer $n_L$ is the same integer appearing in Selberg's `regularity of distribution' conjecture:
\begin{equation}\label{reg of dist}\sum_{p\leq x}\frac{|\alpha_L(p)|^2}{p}=n_L\log\log x+O(1).
\end{equation}
This is not so surprising since one expects the mean square of $L(1/2+it)$ to be asymptotic to a multiple of the sum $\sum_{n\leq T}|\alpha_L(n)|^2 n^{-1}$. The implication of (\ref{reg of dist}) is that this sum is in fact $\sim (a_L(1)/n_L!)\log^{n_L} T$.

For general $k$, we outline a verification of this last assertion. We assume the following two conjectures of Selberg \cite{selberg}: For primitive $F\in\mathcal{S}$ we have 
\begin{equation}\sum_{p\leq x}\frac{|\alpha_F(p)|^2}{p}=\log\log x+O(1),
\end{equation}
and for two distinct and primitive $F,G\in\mathcal{S}$ we have 
\begin{equation}\label{orth conj}\sum_{p\leq x}\frac{\alpha_{F}(p)\overline{\alpha_{G}(p)}}{p}=O(1).
\end{equation}
We also require that the functions
\begin{equation}M_j(s)=\sum_{n=1}^\infty \frac{|\alpha_{L_j}(n)|^2}{n^s}
\end{equation}
behave `reasonably', in particular, that they posses an analytic continuation. 

Now, given the factorisation $L(s)=\prod_{j=1}^m L_j(s)^{e_j}$ into primitive functions we have
\begin{equation}\label{alpha expansion}
\begin{split}
\sum_{p\leq x}\frac{|\alpha_{L,k}(p)|^2}{p}
=&\sum_{p\leq x}k^2\left(\sum_{j=1}^m e_j^2|\alpha_{L_j}(p)|^2+\sum_{i\neq j}e_ie_j\alpha_{L_i}(p)\overline{\alpha_{L_j}(p)}\right)p^{-1}
\\=&n_Lk^2\log\log x+O(1).
\end{split}
\end{equation}
If $M(s)=\sum |\alpha_{L,k}(n)|^2n^{-s}$, then the above equation implies a factorisation of the form
\begin{equation}M(s)=U_k(s)\prod_{j=1}^m M_j(s)^{(e_jk)^2}
\end{equation}
where $U_k(s)$ is some Euler product that is absolutely convergent for $\sigma>1/2$. Therefore, we may analytically continue $M(s)$ to $\sigma>1/2$. 
Also, by applying partial summation to (\ref{alpha expansion}) we see
\begin{equation}\label{sum over p}\sum_{p}\frac{|\alpha_{L,k}(p)|^2}{p^{s+1}}=n_Lk^2\int_2^\infty \frac{dx}{x^{s+1}\log x}+\cdots=-n_Lk^2\log s+\cdots,
\end{equation}
for small $\sigma>0$. If we write
\begin{equation}
\begin{split}
M(s+1)=& \prod_p\left(1+\frac{|\alpha_{L,k}(p)|^2}{p^{s+1}}+\frac{|\alpha_{L,k}(p^2)|^2}{p^{2(s+1)}}+\cdots\right)
\\=& \prod_p\left(\exp\left(\frac{|\alpha_{L,k}(p)|^2}{p^{s+1}}\right)+E_k(p,s)\right)
\\=& \exp\left(\sum_p\frac{|\alpha_{L,k}(p)|^2}{p^{s+1}}\right) \prod_p\left(1+F_k(p,s)\right),
\end{split}
\end{equation}
where $E_k(p,s)$ and $F_k(p,s)$ are both $\ll p^{-2(\sigma+1)+\epsilon}$, we see that $M(s+1)$ has a pole of order $n_Lk^2$ at $s=0$. It is shown in \cite{conrey ghosh 2} that on the assumption of Selberg's conjectures, if $F\in\mathcal{S}$ has a pole of order $m$ at $s=1$ then $\zeta(s)^m$ divides $F(s)$. Consequently, the residue of $M(s+1)$ at $s=0$ is given by $a_L(k)$. The usual argument involving Perron's formula now gives
\vspace{0.3cm}
\begin{equation}\sum_{n\leq T}\frac{|\alpha_{L,k}(n)|^2}{n}\sim \frac{a_L(k)}{(n_Lk^2)!}\log^{n_Lk^2} T.
\end{equation} 

\newpage

\end{document}